%% file: main.tex
\documentclass[a4paper,11pt]{article}
\usepackage{preamble}
\title{Accelerated Proximal Gradient Method\\for Non-Strongly Convex Time-Varying Optimization}
\author{
  Tomoya Kamijima\thanks{Department of Mathematical Informatics, The University of Tokyo, Tokyo, Japan. \\Email: \texttt{kamijima-tomoya101@g.ecc.u-tokyo.ac.jp}.}
  \qquad
  Naoki Marumo\thanks{Department of Mathematical Informatics, The University of Tokyo, Tokyo, Japan. \\Email: \texttt{marumo@mist.i.u-tokyo.ac.jp}.}
  \qquad
  Akiko Takeda\thanks{Department of Mathematical Informatics, The University of Tokyo, Tokyo, Japan; RIKEN Center for Advanced Intelligence Project, Tokyo, Japan. Email: \texttt{takeda@mist.i.u-tokyo.ac.jp}.}
}
\date{}

\begin{document}
\maketitle
\vspace{-2em}

\begin{abstract}
    This paper studies time-varying composite optimization problems with convex but not necessarily strongly convex objective functions, on compact domains and with bounded temporal variability.
    As a baseline, we first analyze the time-varying proximal gradient (TV-PG) method.
    We show that its asymptotic function value gap is bounded by $O(\delta^{1/2})$ as $\delta\to0$, where $\delta$ measures the temporal variability of the objective function.
    This dependence on $\delta$ is tight in the worst case for TV-PG.
    Our main contribution is a time-varying accelerated proximal gradient method that improves the asymptotic upper bound on the function value gap to $O(\delta^{2/3})$.
    The method builds on a novel acceleration scheme whose momentum parameter is determined by the current iterates, with no explicit dependence on the iteration index.
    We establish the bounds for both the baseline and accelerated methods through a unified analysis based on potential functions.
    Numerical experiments on a synthetic problem and autoregression problems with real-world datasets demonstrate the tracking performance of the proposed algorithm.
\end{abstract}

\input{1_introduction.tex}
\input{2_problem.tex}
\input{3_tvapg.tex}
\input{5_numerical.tex}
\input{6_conclusion.tex}

\subsection*{AI use statement}
In this work, we used generative AI tools for the following tasks:
\begin{itemize}
  \item Helping develop theoretical models or conceptual frameworks, formulating mathematical claims, providing critical ingredients for proving mathematical claims, and assisting in proof writing, for the lower bound of TV-PG in \cref{thm:tvpgd_lower_bound} and the development of TV-APG building on TV-APGe.
  \item Generating synthetic data sets, designing experiments, cleaning and reformatting datasets, and implementing methods in \cref{sec:numerical}.
  \item Assisting with translation.
\end{itemize}
We have not used generative AI tools for proposing or refining hypotheses or interpreting results, and supporting qualitative or thematic data analysis is not applicable to this work.
Additionally, we used generative AI tools for the following tasks:
\begin{itemize}
  \item Suggesting experimental parameters and creating scientific figures in \cref{sec:numerical,sec:exp_details}.
  \item Sourcing/searching for information, identifying relevant literature, and summarizing existing literature.
  \item Editing a research paper to improve readability and formatting references.
\end{itemize}
We have reviewed all AI-assisted work as follows:
\begin{itemize}
  \item All LLM-assisted proofs were checked line by line by the authors.
  \item All LLM-generated code was verified and tested for correctness by an author.
  \item All LLM-collected information was checked through a manual literature survey by an author.
\end{itemize}
We take responsibility for the final content of this work, including text, claims, or artifacts produced with the aid of generative AI.

\subsection*{Acknowledgements}
This work was partially supported by JSPS KAKENHI (23K28041, 24K23853, and 25KJ1123) and JST CREST (JPMJCR24Q2).

\bibliographystyle{abbrvnat}
\bibliography{ref}

\clearpage
\appendix

\input{90_appendix_application.tex}
\input{90_appendix_problem.tex}
\input{91_appendix_tvpgd.tex}
\input{92_appendix_apg.tex}
\input{92_appendix_tvapg.tex}
\input{93_appendix_tvapge.tex}
\input{94_appendix_numerical.tex}

\end{document}

%% file: 1_introduction.tex
\section{INTRODUCTION}\label{sec:introduction}
Optimization problems in applications such as machine learning, control, and signal processing often involve objective functions that change over time. 
We consider the following problem:
\begin{align}
    \min_{x\in\R^d}F_k(x)\coloneqq f_k(x)+g(x),\label{eq:problem}
\end{align}
where $f_k:\R^d\to\R$ is a smooth convex function that changes over time $k\in\N$, and $g:\R^d\to\R\cup\{+\infty\}$ is a proper closed convex function that is time-invariant.
We also refer to each time step as a round.
When $g$ is the indicator function of a compact convex set $\mathcal X\subset\R^d$, i.e., $g(x)=0$ for all $x\in\mathcal X$ and $g(x)=+\infty$ otherwise, the problem \cref{eq:problem} reduces to the constrained problem 
\begin{align}
    \min_{x\in\mathcal X}f_k(x).\label{eq:problem_constrained}
\end{align}
Problem~\cref{eq:problem} has been studied mainly in the two contexts of time-varying convex optimization (TVCO)~\citep{popkov2005gradient} and online convex optimization (OCO)~\citep{zinkevich2003online}. 

In TVCO, the objective function $F_k$ changes gradually over time.
The goal is to ensure that the \emph{error} at each round tends to zero as the variation between $F_k$ and $F_{k+1}$ tends to zero.
The error is often measured by $\norm{x_k-x_k^*}$, where $x_k$ is the current solution and $x_k^*$ is a minimizer of $F_k$.
The main difficulty is that the limited computation time at each round permits only a few algorithmic iterations~\citep{dall2020optimization}.
Most studies assume strong convexity to establish theoretical guarantees, e.g., \citet{popkov2005gradient,simonetto2016class}, with a few exceptions discussed in \cref{sec:related_work}.

In OCO, the objective function $F_k$ can change abruptly and the main focus is the constrained problem~\cref{eq:problem_constrained}.
The goal is to guarantee that the (dynamic) regret is sublinear under suitable restrictions, that is, 
\begin{align}
    \sum_{k=1}^K (F_k(x_k)-F_k(u_k))
    =o(K)\label{eq:regret}
\end{align}
for a comparator sequence $(u_k)_{k=1}^K\subset\mathcal X$ whose choice determines the notion of regret.
Online gradient descent can achieve sublinear static regret, corresponding to the choice $u_k\equiv u$, without strong convexity~\citep{zinkevich2003online}.
However, sublinear regret does not ensure small $F_k(x_k)-F_k(u_k)$ at every round.

This work explores the intermediate setting between TVCO and OCO, where $F_k$ is not necessarily strongly convex but changes gradually over time.
By relaxing the strong convexity assumption imposed in TVCO to mere convexity, we obtain results that apply to a broader range of problems and allow greater modeling flexibility.
For example, regression with the pseudo-Huber loss over a moving data window yields convex objectives that are not necessarily strongly convex.
Even when each objective is strongly convex, its strong convexity parameter may become arbitrarily small as the data change (see \cref{sec:application_regression}).
By combining OCO with controlled temporal variation as in TVCO, we can obtain meaningful per-round guarantees.
\cref{tab:comparison} summarizes the differences between standard TVCO, OCO, and our setting.

\begin{table}[t]
    \centering
    \caption{Comparison of Problem Settings.}
    \setlength{\tabcolsep}{3pt}
    \begin{tabular}{c|c|c|c}
        \toprule
        & \textbf{TVCO} & \textbf{OCO} & \textbf{Ours} \\
        \midrule
        Small Per-Round Variation 
         & \checkmark & & \checkmark \\
        Severe Time Limitation & \checkmark & & \checkmark \\
        No Strong Convexity & & \checkmark & \checkmark \\
        Per-Round Guarantee & \checkmark & & \checkmark \\
        \bottomrule
    \end{tabular}
    \label{tab:comparison}
\end{table}

We begin by showing that, under mere convexity, the time-varying proximal gradient (TV-PG) method~\citep{simonetto2017time} has a tight $O(\delta^{1/2})$ asymptotic tracking barrier as $\delta\to0$, where $\delta$ measures the temporal variation of $F_k$.
The analysis of TV-PG reveals that the tracking guarantee derives from the convergence rate of the proximal gradient (PG) method.
This leads to the following natural question:

\medskip
\emph{Can we improve per-round tracking guarantees in non-strongly convex TVCO by incorporating an acceleration technique?}
\medskip

We answer this question affirmatively.
We develop a new acceleration method that improves this dependence to $O(\delta^{2/3})$.

\subsection{Contributions}
The main contributions are summarized as follows:
\begin{itemize}
    \item We bound TV-PG's function value gap from above and below without strong convexity.
    For the upper bound, we use a transparent proof strategy based on potential functions.
    \item We propose a time-varying accelerated PG (TV-APG) algorithm that can achieve a better performance guarantee than TV-PG.
    Building on the accelerated PG method described in \citet{d2021acceleration}, we develop a novel variant for TV-APG.
    \item We provide numerical experiments to demonstrate the tracking performance of TV-APG on a synthetic quartic problem and three autoregression problems. TV-APG performs best on the synthetic problem, while another variant can perform better on benchmark datasets, suggesting a trade-off between worst-case guarantees and empirical adaptivity.
\end{itemize}

\subsection{Related Work}\label{sec:related_work}
This subsection reviews related work on TVCO and acceleration methods.
Literature on OCO is reviewed in \cref{sec:related_work_oco}.

\paragraph{TVCO.}
\citet{popkov2005gradient} proposed iteratively applying gradient descent to TVCO and established a theoretical guarantee on its tracking performance under the strong convexity assumption.
\citet{simonetto2016class} introduced a prediction-correction framework to improve tracking performance.
Various prediction-correction methods have since been proposed~\citep{simonetto2017prediction,lin2019simplified,bastianello2019prediction,bastianello2023extrapolation,kamijima2025simple}.
Constrained problems and variational inequalities have also been studied under strong regularity~\citep{zavala2010real,dinh2012adjoint,dontchev2013euler,tang2022running}.
\citet{simonetto2020time} and \citet{dall2020optimization} survey TVCO and its applications.
In contrast, our work analyzes discrete-time TVCO without strong convexity.

Relaxing strong convexity to mere convexity in TVCO is nontrivial because most existing analyses rely on linear convergence of the optimization methods.
There are studies on time-varying optimization without strong convexity, but assumptions such as the Polyak--{\L}ojasiewicz condition~\citep{liu2018coordinate,kim2022online} or metric subregularity~\citep{bastianello2024robust} are imposed to ensure linear convergence. 
In contrast, \citet{simonetto2017time} proposed methods based on Krasnosel'ski\u{\i}--Mann iteration and provided theoretical guarantees without assuming conditions for linear convergence.
\citet{simonetto2017time} analyzed the averages over $k$ of $\norm{x_{k+1}-x_k}^2$ and $F_k(x_k)-F_k^*$, where $F_k^*$ is the optimal value of $F_k$, assuming a compact domain. 
\citet{madden2021bounds}, however, proposed a method for non-strongly convex TVCO and established a per-round guarantee by regularizing $F_k$, without assuming compactness of the domain (see \cref{sec:orpgd}). 
We obtain a per-round bound on $F_k(x_k)-F_k^*$ while retaining the compact-domain assumption and avoiding the regularization.
Avoiding regularization eliminates the need to tune a regularization parameter.
Our numerical experiments also show better performance than the regularized baseline.
We also note that, in the continuous-time limit, \citet{sun2020continuous} analyzed the static and dynamic regret of Nesterov's accelerated gradient method~\citep{nesterov1983method} for non-strongly convex TVCO.

\paragraph{Acceleration Methods.}
First-order acceleration dates back to Nesterov's accelerated gradient method, which attains the optimal $O(n^{-2})$ rate for smooth convex optimization~\citep{nesterov1983method}.
This principle was extended to PG methods for composite objectives~\citep{Tseng2008Accelerated,beck2009fast,Nesterov2013Gradient}.
In standard versions of these methods, the extrapolation parameters follow prescribed schedules based on the iteration number, rather than being determined by feedback from the iterates.

Methods based on Polyak step sizes instead choose their momentum parameters using information generated along the optimization trajectory, with no explicit dependence on the iteration index~\citep{barre2020complexity,goujaud2024quadratic}.
Related approaches in continuous time use closed-loop damping or time scaling determined by the evolving state, with no explicit dependence on time~\citep{attouch2023fast,attouch2025fast,maier2026near}.
These results do not establish an $O(n^{-2})$ objective-gap guarantee for PG with state-dependent extrapolation over general composite convex objectives.
Our method selects its extrapolation parameter from the current iterate pair, with no explicit dependence on the iteration index, and achieves an $O(n^{-2})$ objective-gap bound for composite convex objectives with a compact effective domain.

\subsection{Notation and Definitions}
We use the standard Landau notation $O(\cdot)$, $\Omega(\cdot)$, and $\Theta(\cdot)$ to denote asymptotic upper bounds, lower bounds, and matching upper and lower bounds, respectively.
The norm $\|\cdot\|$ denotes the Euclidean norm for vectors and the spectral norm for matrices.
A function $f\colon \R^d\to\R$ is \emph{$L$-smooth} on $\mathcal X$ if $\|\nabla f(x)-\nabla f(y)\|\leq L\|x-y\|$ for all $x,y\in \mathcal X$.
The set of all closed, convex, and proper functions from $\R^d$ to $\R\cup\{\infty\}$ is denoted by $\mathcal G$.
The proximal operator of a function $g\in\mathcal G$ is denoted by $\prox_g(x)\coloneqq\argmin_{y\in \R^d}g(y)+\frac{1}{2}\|y-x\|^2$.
For a function $F\colon\R^d\to\R\cup\{\infty\}$, let $\mathcal X_F^*\coloneqq\argmin_{x\in \R^d}F(x)$, $\Delta_F(x,y)\coloneqq F(x)-F(y)$, and $\Delta_F(x)\coloneqq \sup_{y\in \R^d} \Delta_F(x,y)$.

%% file: 2_problem.tex
\section{PROBLEM SETTING}\label{sec:problem}
We consider the TVCO problem~\cref{eq:problem}.
We compute $x_k$ using only $g,f_0,\ldots,f_{k-1}$ for consistency with the online setting, although some studies in TVCO allow access to $f_k$ when computing $x_k$.
Bounds for the iterate obtained after using $f_k$ can be derived by combining our bounds for $x_k$ with standard convergence results for time-invariant optimization.
The number of iterations per round must also be bounded because computation time is limited.

\subsection{Assumptions}
Let $\mathcal X\coloneqq\dom g$ and $\mathcal F_L$ be the set of all $L$-smooth convex functions on $\mathcal X$.
This paper assumes the following conditions.
\begin{assumption}\label{asm:problem}
    Let $D,L,\delta> 0$ be constants.
    \begin{enuminasm}
        \item $g\in\mathcal G$ and $\mathcal X$ is compact with $\norm{x-y}\leq D$ for all $x,y\in\mathcal X$.\label{asm:boundedness}
        \item $f_k\in\mathcal F_L$ for all $k$.\label{asm:function_class}
        \item $|\Delta_{f_{k+1}}(x,y)-\Delta_{f_k}(x,y)|\leq 2\delta$ for all $x,y\in\mathcal X$ and $k$.\label{asm:t_lipschitz}
    \end{enuminasm}
\end{assumption}

\cref{asm:boundedness} generalizes the assumption, common in OCO, that the feasible set is compact and convex~\citep{hazan2016introduction}.
\cref{asm:function_class} is standard in convex optimization.
\cref{asm:t_lipschitz} describes the temporal variation of the objective function and is specific to the time-varying setting.
\citet{simonetto2017time} analyzed the average of $\Delta_{F_k}(x_k)$ over $k$ under the same assumptions, except that \cref{asm:t_lipschitz} is replaced by the slightly stronger condition $|f_{k+1}(x)-f_k(x)|\leq\delta$.

Our interest is in slowly varying problems, where small $\delta$ makes per-round guarantees informative. Large values of $\delta$, by contrast, allow rapid changes in the objective function, which can make tracking impossible. Although OCO includes such problems, they are outside the scope of this paper. This small-$\delta$ regime is also natural in many applications. For example, suppose $f_k(x)=f(x;kh)$, where $f\colon\mathcal X\times\R\to\R$ and $h>0$ is the sampling period. If $f(x;t)$ is Lipschitz continuous in $t$ uniformly over $x\in\mathcal X$, then $\delta=O(h)$ as $h\to0$.

\subsection{Performance Measure}\label{sec:performance_measure}
Let $x_k^*\in\mathcal X_{F_k}^*$ be a minimizer of $F_k$.
Under strong convexity, the tracking error $\|x_k-x_k^*\|$ is commonly used to measure performance~\citep{popkov2005gradient,simonetto2016class}.
Without strong convexity, the minimizer is not necessarily unique.
Even when it is unique, the distance to the minimizer may be large despite a small function value gap (see \cref{sec:example_performance_measure}).
We therefore use the function value gap 
\begin{align}
    \Delta_k
    \coloneqq\Delta_{F_k}(x_k)
    = F_k(x_k)-\inf_{x\in\mathcal X}F_k(x)
\end{align}
as our performance measure.
In particular, any per-round upper bounds on $\Delta_k$ can be summed to obtain a bound on the dynamic regret.

%% file: 3_tvapg.tex
\section{ACCELERATED TRACKING}\label{sec:tvapg_unified}
Let $g\in\mathcal G$ and $f\in\mathcal F_L$.
Let $T_{f,g}\colon \R^d\to \mathcal X$ be the proximal gradient (PG) operator defined as 
\begin{align}
    T_{f,g}(x)\coloneqq\prox_{L^{-1}g}(x-L^{-1}\nabla f(x)).
\end{align}

\subsection{A Tight Baseline: TV-PG}\label{sec:tvpg}
If $f$ is strongly convex, the PG operator $T_{f,g}$ is a contraction mapping, which implies that the PG algorithm converges linearly.
This property plays a central role in the analysis of TVCO.
However, in the non-strongly convex setting, the PG operator is not necessarily a contraction mapping, and thus the analysis of TVCO becomes more challenging.

Let $F\coloneqq f+g$ and $p\geq0$.
The central idea of our analysis is to utilize the potential function $\Psi_{F,p}\colon\mathcal X^2\to\R$ defined as
\begin{align}
    \Psi_{F,p}(x,u)
    &\coloneqq p\Delta_F(x,u)+\frac L2\|x-u\|^2.
\end{align}
The following proposition shows that the PG operator $T_{f,g}$ does not increase the potential.
The proof is given in \cref{sec:pgd_potential_proof}.
\begin{proposition}[e.g., {\citet[Theorem 4.1]{li2025proximal}}]\label{prop:pgd_potential}
    Let $g\in\mathcal G$, $f\in\mathcal F_L$, $x,u\in\mathcal X$, and $p\geq0$.
    Then, we have $\Psi_{F,p+1}(T_{f,g}(x),u)\leq\Psi_{F,p}(x,u)$.
\end{proposition}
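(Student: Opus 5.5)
The plan is to derive the standard "three-point" (descent) inequality for the proximal gradient step and then take a nonnegative combination of two of its instances. Write $x^+\coloneqq T_{f,g}(x)$. The optimality condition of the prox gives $L(x-x^+)-\nabla f(x)\in\partial g(x^+)$. Combining this with $L$-smoothness of $f$ on $\mathcal X$ (descent lemma $f(x^+)\leq f(x)+\langle\nabla f(x),x^+-x\rangle+\frac L2\|x^+-x\|^2$, valid since $x,x^+\in\mathcal X$ and $\mathcal X$ is convex) and convexity of $f$ at $x$, I would establish the key inequality, for every $y\in\mathcal X$,
\begin{align}
    F(x^+)\leq F(y)+\frac L2\|x-y\|^2-\frac L2\|x^+-y\|^2 .\label{eq:plan_three_point}
\end{align}
The derivation is routine. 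One has $F(x^+)\leq f(x)+\langle\nabla f(x),x^+-x\rangle+\frac L2\|x^+-x\|^2+g(x^+)$. Then bound $g(x^+)\leq g(y)-\langle L(x-x^+)-\nabla f(x),y-x^+\rangle$ via the subgradient inequality, and $f(x)+\langle\nabla f(x),y-x\rangle\leq f(y)$ via convexity. Finally use the polarization identity $2\langle x-x^+,x^+-y\rangle=\|x-y\|^2-\|x-x^+\|^2-\|x^+-y\|^2$ to collect the quadratic terms.

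Next I would apply \eqref{eq:plan_three_point} twice. First, with $y=u$, this gives $\Delta_F(x^+,u)+\frac L2\|x^+-u\|^2\leq\frac L2\|x-u\|^2$. Second, with $y=x$, it gives $\Delta_F(x^+,x)\leq-\frac L2\|x^+-x\|^2\leq 0$, i.e.\ $F(x^+)\leq F(x)$. Multiplying the second by $p\geq0$ and adding it to the first yields
\begin{align}
    (p+1)F(x^+)-pF(x)-F(u)+\frac L2\|x^+-u\|^2\leq\frac L2\|x-u\|^2 .
\end{align}
The left-hand side equals $(p+1)\Delta_F(x^+,u)-p\Delta_F(x,u)+\frac L2\|x^+-u\|^2$. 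Rearranging therefore gives exactly $\Psi_{F,p+1}(x^+,u)\leq\Psi_{F,p}(x,u)$.

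The only delicate point I expect is in the main inequality \eqref{eq:plan_three_point}, namely justifying the descent lemma when $f$ is only assumed $L$-smooth on $\mathcal X$ rather than on all of $\R^d$. This is fine because $x^+\in\dom g=\mathcal X$ (so $F(x^+)<\infty$) and the segment $[x,x^+]$ lies in the convex set $\mathcal X$, so the integral form of the descent lemma applies along that segment. Likewise, the convexity inequality for $f$ is used only at points of $\mathcal X$. Everything else is bookkeeping, and nonnegativity of $p$ is exactly what allows the monotonicity inequality to be added with a nonnegative weight.
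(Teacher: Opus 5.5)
Your proposal is correct and follows the paper's argument. The paper likewise first proves the three-point inequality $\Delta_F(T_{f,g}(x),y)\le\frac L2\|x-y\|^2-\frac L2\|T_{f,g}(x)-y\|^2$ for $y\in\mathcal X$, applies it with $y=u$ and $y=x$, and adds $p$ times the second instance to the first. The only cosmetic difference is how that inequality is derived: the paper uses the $L$-strong convexity of the prox model $\tilde F_x$, whereas you use the subgradient optimality condition together with the polarization identity, which amounts to the same computation.
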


TV-PG~\citep{simonetto2017time} iteratively applies the PG operator to the time-varying function $F_k=f_k+g$ at each round $k$, i.e., $x_{k+1}=T_{f_k,g}^n(x_k)$, where $T_{f_k,g}^n$ is the $n$-fold composition of $T_{f_k,g}$.
In practical applications, the available computation time limits the number of PG steps $n$ per round.

We now give an upper bound on the function value gap $\Delta_k$ of TV-PG.
The potential function plays a central role in the proof.

\begin{theorem}\label{thm:tvpgd_new_error}
    Suppose that \cref{asm:problem} holds.
    Then, TV-PG guarantees
    \begin{align}
        \Delta_k\leq \frac{LD^2}{2nk}+D\sqrt{\frac{2L\delta}n}+2\delta
    \end{align}
    for all $k\geq1$.
\end{theorem}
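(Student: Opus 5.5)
The plan is to telescope the potential $\Psi_{F_j,p}(\cdot,u)$ of \cref{prop:pgd_potential} over a window of consecutive rounds. The comparator is a single fixed point $u\in\mathcal X$, and the weight $p$ grows linearly along the window. The key choice is to take $u\coloneqq x_k^*\in\mathcal X^*_{F_k}$, a minimizer of the \emph{final} function $F_k$. This is legitimate because \cref{prop:pgd_potential} holds for any $u\in\mathcal X$, and the comparator is used only in the analysis. With this choice, the quantity produced at the end of the telescoping, $\Delta_{F_k}(x_k,u)$, is exactly $\Delta_k$, so changing the comparator costs nothing.

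Fix a window length $t\in\{1,\dots,k\}$ and set $m\coloneqq k-t$. For each round $j=m,\dots,k-1$, applying \cref{prop:pgd_potential} $n$ times to $T_{f_j,g}$ gives
\begin{align}
    \Psi_{F_j,\,n(j-m+1)}(x_{j+1},u)\leq\Psi_{F_j,\,n(j-m)}(x_j,u).
\end{align}
To pass from round $j$ to round $j+1$, I use \cref{asm:t_lipschitz}. Since $g$ is time-invariant, $\Delta_{F_{j+1}}(x,u)\leq\Delta_{F_j}(x,u)+2\delta$ for all $x\in\mathcal X$, hence $\Psi_{F_{j+1},p}(x,u)\leq\Psi_{F_j,p}(x,u)+2\delta p$. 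Chaining these inequalities from $j=m$ to $k-1$ gives
\begin{align}
    nt\,\Delta_{F_k}(x_k,u)+\frac L2\|x_k-u\|^2
    \leq \frac L2\|x_m-u\|^2+2\delta n\sum_{i=1}^{t-1} i
    \leq \frac{LD^2}{2}+\delta n t(t-1).
\end{align}
Here the initial weight is $0$, and \cref{asm:boundedness} bounds the initial distance term by $D^2$. The transition costs are $2\delta$ times the weights $n,2n,\dots,n(t-1)$ at the $t-1$ round changes. Dropping the nonnegative distance term on the left and dividing by $nt$ yields
\begin{align}
    \Delta_k\leq \frac{LD^2}{2nt}+\delta(t-1)\qquad\text{for every } t\in\{1,\dots,k\}.
\end{align}

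It remains to choose $t$. Let $t^*\coloneqq D\sqrt{L/(2n\delta)}$, which balances the two terms, since $LD^2/(2nt^*)=\delta t^*=D\sqrt{L\delta/(2n)}$.

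If $k\leq t^*$, take $t=k$. Then $\delta(k-1)\leq\delta t^*$, and the bound is at most $LD^2/(2nk)+D\sqrt{L\delta/(2n)}$.

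If $k>t^*$, take $t=\lceil t^*\rceil\leq k$. Then $LD^2/(2nt)\leq D\sqrt{L\delta/(2n)}$ and $\delta(t-1)\leq\delta t^*$, so the bound is at most $D\sqrt{2L\delta/n}$.

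In both cases $\Delta_k$ is bounded by the claimed expression $\frac{LD^2}{2nk}+D\sqrt{2L\delta/n}+2\delta$, with slack. If $t^*<1$, the case $t=1$ gives $\Delta_k\leq LD^2/(2n)$, which is still dominated, because then $LD^2/(2n)<D\sqrt{2L\delta/n}$.

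The step I expect to need the most care is the bookkeeping in the telescoping. The weight attached to $\Delta_{F_j}$ must be exactly $n(j-m)$ at each round change, so that the accumulated variation cost is quadratic in $t$, namely $\delta n t(t-1)$, and not worse. I also need to check that \cref{asm:t_lipschitz} is applied to differences $\Delta_f(x,u)$, so that the comparator may stay fixed across rounds without ever being a minimizer of the intermediate $F_j$. The case split on $t$ and the rounding of $t^*$ are routine, and the slack in the stated constants (the extra $2\delta$) absorbs them.
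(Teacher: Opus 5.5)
Your route is essentially the paper's: telescope $\Psi_{F_j,\,n(j-m)}(x_j,u)$ over the last $t$ rounds with a fixed comparator, then take $t=\min\{k,\lceil D\sqrt{L/(2n\delta)}\rceil\}$. However, the bookkeeping step you flagged is off by one transition, and the intermediate inequality you derive is false.

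\textbf{The missing transition.} Since $x_k=T^n_{f_{k-1},g}(x_{k-1})$, applying \cref{prop:pgd_potential} through round $j=k-1$ ends at $\Psi_{F_{k-1},nt}(x_k,u)$. That bounds $\Delta_{F_{k-1}}(x_k,u)$, not $\Delta_{F_k}(x_k,u)$. One more switch $F_{k-1}\to F_k$ at weight $nt$ is needed, costing $2\delta nt$. So there are $t$ function changes, with weights $n,2n,\dots,nt$. The accumulated cost is therefore $\delta nt(t+1)$, and the correct window bound is
\[
\Delta_k\le\frac{LD^2}{2nt}+(t+1)\delta,
\]
which is exactly the paper's \cref{eq:tvpgd_new_error_proof_sketch}.

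\textbf{A counterexample to your version.} Take $k=t=1$, $g$ the indicator of $[-D/2,D/2]$, $f_0\equiv0$, $f_k(x)=(2\delta/D)x$ for $k\geq1$, and $x_0=D/2$. These satisfy \cref{asm:problem}, and $x_1=x_0$, so $\Delta_1=2\delta$. Your bound claims $\Delta_1\le LD^2/(2n)$, which fails whenever $\delta>LD^2/(4n)$. The same slip invalidates your side remark that $t^*<1$ gives $\Delta_k\le LD^2/(2n)$; the correct value there is $LD^2/(2n)+2\delta$.

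\textbf{The theorem still follows, but the $2\delta$ is not slack.} With the corrected bound, the case $k\le t^*$, $t=k$, gives $\frac{LD^2}{2nk}+D\sqrt{L\delta/(2n)}+\delta$. The case $k>t^*$, $t=\lceil t^*\rceil$, gives
\[
D\sqrt{L\delta/(2n)}+(t^*+2)\delta=D\sqrt{2L\delta/n}+2\delta .
\]
So the final $+2\delta$ in the statement is consumed exactly by the last transition and the rounding of $t^*$.
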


\begin{proof}[Proof sketch]
    Fix $u\in\mathcal X$ arbitrarily.
    From \cref{prop:pgd_potential,asm:t_lipschitz}, we have
    \begin{align}
        \Psi_{k-m+i+1,i+1}
        &\leq\Psi_{k-m+i,i}+2n(i+1)\delta
    \end{align}
    for all $0\leq i<m\leq k$, where $\Psi_{k,i}\coloneqq\Psi_{F_k,ni}(x_k,u)$.
    Solving this recursion and using the definition of $\Psi_{k,i}$ yield
    \begin{align}
        \Delta_{F_k}(x_k,u)
        &\leq \frac{L\|x_{k-m}-u\|^2}{2nm}+(m+1)\delta\label{eq:tvpgd_new_error_proof_sketch}
    \end{align}
    for all $1\leq m\leq k$.
    Taking $u\in\mathcal X_{F_k}^*$ and $m\coloneqq\min\{k,\ceil{D\sqrt{L/(2n\delta)}}\}$ completes the proof.
    See \cref{sec:tvpgd_new_error_proof} for the full proof.
\end{proof}

The upper bound on TV-PG's function value gap $\Delta_k$ in \cref{thm:tvpgd_new_error} decreases at the rate $O(k^{-1})$ toward an asymptotic error bound.
Taking $k\to\infty$ gives a bound of $O(\sqrt{\delta/n})$ for small $\delta$.
Taking $\delta\to0$ also recovers the standard $O(n^{-1})$ convergence rate of PG for time-invariant convex problems~\citep{beck2017first}.
The dependence on $\sqrt{\delta/n}$ follows from \cref{eq:tvpgd_new_error_proof_sketch}.
The first term, of order $O(1/(nm))$, comes from the convergence rate of PG.
The second term, of order $O(m\delta)$, bounds the error caused by changes in the objective over the last $m$ rounds.
Balancing these two terms yields the $\sqrt{\delta/n}$ dependence.

\Citet{simonetto2017time} also analyzed TV-PG, but they bound the average of $\Delta_k$ over $k$ rather than $\Delta_k$ itself.
Their asymptotic bound includes the term $L\delta_\ast^2/2$.
Here, $\delta_\ast$ is an upper bound on $\|x_{k+1}^*-x_k^*\|$, where $x_k^*\in\mathcal X_{F_k}^*$.
Even when $\delta$ is small, the term $L\delta_\ast^2/2$ can be as large as $LD^2/2$, as illustrated in \cref{sec:example_performance_measure}.

\cref{thm:tvpgd_new_error} is tight in the sense that the asymptotic function value gap of TV-PG is lower bounded by $\Omega(D\sqrt{L\delta/n})$.
See \cref{sec:tvpgd_lower_bound_proof} for the proof.
\begin{theorem}\label{thm:tvpgd_lower_bound}
    For any $D,L>0$, $n\geq1$, and $0<\delta<LD^2/(16n)$, there exist $g\in\mathcal G$ and $(f_k)_{k\geq0}$ satisfying \cref{asm:problem} with these constants, such that $(x_k)_{k\geq0}$ generated by TV-PG satisfies
    \begin{align}
        \limsup_{k\to\infty}\Delta_k\geq\frac{D}{8}\sqrt{\frac{L\delta}{n}}.
    \end{align}
\end{theorem}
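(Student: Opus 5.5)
The plan is to build a one-dimensional counterexample in which each $f_k$ is \emph{linear}, so that TV-PG moves the iterate only by a small, slope-proportional amount per step, and the slope changes slowly enough to satisfy \cref{asm:t_lipschitz}. Take $d=1$, $g$ the indicator of $\mathcal X=[0,D]$, and $f_k(x)=c_kx$. Linear functions are convex and $L$-smooth for every $L$, so \cref{asm:boundedness,asm:function_class} hold. For \cref{asm:t_lipschitz} we have $|\Delta_{f_{k+1}}(x,y)-\Delta_{f_k}(x,y)|=|c_{k+1}-c_k||x-y|\le D|c_{k+1}-c_k|$. It therefore suffices to take the slope sequence to be a triangle wave with increments $|c_{k+1}-c_k|=2\delta/D$ oscillating between $-\bar c$ and $\bar c$, where the amplitude $\bar c$ is fixed below. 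On $[0,D]$ the PG operator is $T_{f_k,g}(x)=\min\{D,\max\{0,x-c_k/L\}\}$. Hence one round of TV-PG moves $x_k$ by $nc_k/L$ toward the endpoint that minimizes $F_k$, truncated at the boundary. The gap is $\Delta_k=c_kx_k$ if $c_k>0$ and $|c_k|(D-x_k)$ if $c_k<0$.

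The key computation follows one increasing phase. Suppose that at the round $k_0$ where $c_{k_0}\approx 0$ and the slope starts increasing, the iterate sits at (or near) $x=D$, pushed there by the preceding negative phase. After $j$ rounds the slope is $c=2\delta j/D$, and the total displacement is $\sum_{i<j} n(2\delta i/D)/L\approx n\delta j^2/(LD)$. Let $j^*$ be the first round at which this displacement reaches $D/2$, so $j^*\approx D\sqrt{L/(2n\delta)}$. Up to round $j^*$ we have $x_k\ge D/2$, and therefore
\begin{align}
\Delta_{k_0+j^*}\ge \frac{D}{2}\cdot\frac{2\delta j^*}{D}\approx D\sqrt{\frac{L\delta}{2n}}\ge \frac D8\sqrt{\frac{L\delta}{n}}.
\end{align}
The slack between $1/\sqrt2$ and $1/8$ is there to absorb rounding: the ceiling in $j^*$, the last step possibly overshooting, and the iterate starting slightly below $D$. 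The hypothesis $\delta<LD^2/(16n)$ is what I would use to show that a single round's displacement near $j^*$, namely $nc/L\approx\sqrt{2n\delta/L}$, is at most a small fraction of $D$. This makes the discretization errors harmless and keeps $j^*\ge1$.

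Next I would choose the amplitude so that the phases actually drive the iterate to the boundary. Take $\bar c$ large enough that a full half-period with positive slope (or with negative slope) produces a displacement of at least $D$. For instance, $\bar c=2\cdot 2\delta j^*/D$ gives a total displacement on the order of $4D$ during the increasing part alone. Then in every positive half-period the iterate is clamped to $0$, and in every negative half-period it is clamped to $D$. The dynamics are thus periodic after the first cycle. By the symmetric argument, at the start of each positive phase the iterate is exactly at $D$, because the negative phase ended with the clamped state. The displayed bound then recurs once per period, which gives the $\limsup$ claim. Since $\bar c$ is finite, the triangle wave is a legitimate infinite sequence, and the initial point $x_0$ plays no role after the first period.

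I expect the main obstacle to be the careful bookkeeping in the second step: converting the continuous approximation $\sum i\approx j^2/2$ into a rigorous inequality with explicit constants valid for all admissible $\delta$ and $n$, and checking that $\Delta_k$ is still large at the specific round identified. To handle this, I would define $j^*$ as the largest $j$ with $n\delta j(j-1)/(LD)\le D/2$. This guarantees $x_{k_0+j^*}\ge D/2$ without any overshoot issue. I would then lower-bound $j^*\ge D\sqrt{L/(2n\delta)}-1$ and use $\delta<LD^2/(16n)$ to show that this is at least a constant multiple of $D\sqrt{L/(n\delta)}$, which yields the constant $1/8$.
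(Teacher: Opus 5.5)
Your proposal is correct and is essentially the paper's argument: a one-dimensional interval, slopes ramped at the maximal admissible rate $2\delta/D$, the cumulative TV-PG displacement $n\delta j(j-1)/(LD)$ compared with a fixed fraction of $D$ (you use $D/2$, the paper uses $D/4$), and a periodic schedule that returns the iterate to the far endpoint before each ramp. The differences are only in bookkeeping: you use purely linear $f_k$ with a symmetric triangle wave whose negative half-period clamps the iterate at $D$, while the paper adds $\frac{L}{2}(\min\{x,0\})^2$ and a long plateau at slope $-2\delta m/D$ to force $x_K=x_0=D/2$ exactly.
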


In particular, we can construct a sequence of functions $(f_k)_{k\geq0}$ such that the function value gap $\Delta_k$ of TV-PG diverges to infinity if we only assume $g\in\mathcal G$, \cref{asm:function_class}, and \cref{asm:t_lipschitz}.
See \cref{sec:necessity_example} for details.

The matching lower bound shows that the $O(\sqrt{\delta/n})$ dependence is intrinsic to TV-PG.
This raises the central question of this paper: can acceleration break this tracking barrier without strong convexity?

\subsection{State-Dependent Acceleration}\label{sec:apg}
Let $g\in\mathcal G$ satisfy \cref{asm:boundedness}, $f\in\mathcal F_L$, $\gamma_-\geq0$, and let $(x,x^{-})$ satisfy $x,\gamma_-x-(\gamma_--1)x^-\in\mathcal X$.
We start from the following update of an acceleration method:
\begin{align}
    \gamma
    &=\phi(\gamma_{-})\coloneqq\frac12\paren*{1+\sqrt{1+4\gamma_{-}^2}},\label{eq:apg1_equivalence_gamma}\\
    y
    &=x+\frac{\gamma_{-}-1}{\gamma}(x-x^{-}),\label{eq:apg1_equivalence_y}\\
    x^{+}
    &=T_{f,\tilde g_{x,\gamma}}(y),\label{eq:apg1_equivalence_x}
\end{align}
where 
\begin{align}
    \tilde g_{x,\gamma}(z)\coloneqq(1-\gamma^{-1})g(x)+\gamma^{-1}g(\gamma z-(\gamma-1)x).
\end{align}
The only difference from FISTA~\citep{beck2009fast} is that the update of $x^{+}$ uses $\tilde g_{x,\gamma}$ instead of $g$.
In fact, the updates \cref{eq:apg1_equivalence_gamma,eq:apg1_equivalence_y,eq:apg1_equivalence_x} are equivalent to \cref{alg:fista} (see \cref{sec:fista_d_update_equivalence_proof} for details).
We denote it by $\apg_1$.
$\apg_1$ keeps all iterates in $\mathcal X$.

\begin{algorithm}[t]
  \caption{$\apg_1(f,x,z,\gamma,n,L)$~\citep[Algorithm 20]{d2021acceleration}}
  \label{alg:fista}
  \begin{algorithmic}[1]
    \Require $f\in\mathcal F_L$, $x,z\in \mathcal X$, $\gamma\geq0$, $n\geq1$, $L>0$
    \For{$i=1,\ldots,n$}
        \State $\gamma\gets(1+\sqrt{1+4\gamma^2})/2$
        \State $y\gets(1-\gamma^{-1})x+\gamma^{-1}z$
        \State $z\gets\prox_{\gamma L^{-1}g}(z-\tfrac\gamma L\nabla f(y))$
        \State $x\gets(1-\gamma^{-1})x+\gamma^{-1}z$
    \EndFor
    \Ensure $x$, $z$, $\gamma$
  \end{algorithmic}
\end{algorithm}

We focus on the potential function as in \cref{sec:tvpg}.
Let $F\coloneqq f+g$ and $\gamma\geq0$.
The potential function $\tilde\Phi_{F,\gamma}\colon\mathcal X^3\to\R$ for $\apg_1$ is defined as
\begin{align}
    \tilde\Phi_{F,\gamma}(x,z,u)&\coloneqq\gamma^2\Delta_F(x,u)+\frac L2\norm{z-u}^2.\label{eq:potential_function_d}
\end{align}

The following proposition shows that \cref{alg:fista} does not increase the potential function.
\begin{proposition}[{\citet[Theorem 4.22]{d2021acceleration}}]\label{prop:fista_potential_decrease}
    Let $g\in\mathcal G$ satisfy \cref{asm:boundedness}, and let $f\in\mathcal F_L$.
    Fix $x,z,u\in\mathcal X$ and $\gamma\geq0$.
    Let $x^+,z^+\in\mathcal X$ be the output of \cref{alg:fista} initialized at $(x,z)$ with parameter $\gamma$.
    Then $\tilde\Phi_{F,\phi^n(\gamma)}(x^+,z^+,u)\leq\tilde\Phi_{F,\gamma}(x,z,u)$.
\end{proposition}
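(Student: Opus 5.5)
It suffices to prove the claim for $n=1$; the general case then follows by induction over the $n$ iterations of \cref{alg:fista}, since each iteration maps $\gamma$ to $\phi(\gamma)$ and the potential at $\phi^n(\gamma)$ is at most the potential at $\phi^{n-1}(\gamma)$. For one iteration, write $\gamma_+=\phi(\gamma)$, $y=(1-\gamma_+^{-1})x+\gamma_+^{-1}z$, $z^+=\prox_{\gamma_+L^{-1}g}(z-\tfrac{\gamma_+}{L}\nabla f(y))$, and $x^+=(1-\gamma_+^{-1})x+\gamma_+^{-1}z^+$. The defining identity of $\phi$ is $\gamma_+^2-\gamma_+=\gamma^2$. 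We must show
\begin{align}
\gamma_+^2\Delta_F(x^+,u)+\tfrac L2\|z^+-u\|^2\leq\gamma^2\Delta_F(x,u)+\tfrac L2\|z-u\|^2 .
\end{align}
All iterates stay in $\mathcal X$ by convexity: $z^+$ lies in $\dom g$ and $x^+$ is a convex combination since $\gamma_+\geq1$.

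The first step is to bound $f(x^+)$ using $L$-smoothness at $y$. Since $x^+-y=\gamma_+^{-1}(z^+-z)$, we get
\begin{align}
f(x^+)\leq f(y)+\gamma_+^{-1}\langle\nabla f(y),z^+-z\rangle+\tfrac{L}{2\gamma_+^2}\|z^+-z\|^2 .
\end{align}
Convexity of $g$ gives $g(x^+)\leq(1-\gamma_+^{-1})g(x)+\gamma_+^{-1}g(z^+)$. Next, we write $f(y)=(1-\gamma_+^{-1})f(y)+\gamma_+^{-1}f(y)$ and linearize each part with convexity of $f$: the first part against $x$, via $f(y)\leq f(x)-\langle\nabla f(y),x-y\rangle$, and the second against $u$, via $f(y)\leq f(u)-\langle\nabla f(y),u-y\rangle$. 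Because $y$ is the matching convex combination of $x$ and $z$, the inner-product terms collapse to $\gamma_+^{-1}\langle\nabla f(y),z^+-u\rangle$.

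The second step uses the prox optimality condition for $z^+$: $\tfrac{L}{\gamma_+}(z-z^+)-\nabla f(y)\in\partial g(z^+)$. This gives
\begin{align}
g(z^+)+\langle\nabla f(y),z^+-u\rangle\leq g(u)+\tfrac{L}{\gamma_+}\langle z-z^+,z^+-u\rangle .
\end{align}
Combining the two steps yields
\begin{align}
F(x^+)-F(u)\leq(1-\gamma_+^{-1})(F(x)-F(u))+\tfrac{L}{\gamma_+^2}\Big(\langle z-z^+,z^+-u\rangle+\tfrac12\|z^+-z\|^2\Big).
\end{align}
The three-point identity turns the bracket into $\tfrac12\|z-u\|^2-\tfrac12\|z^+-u\|^2$. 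Multiplying by $\gamma_+^2$ and using $\gamma_+^2(1-\gamma_+^{-1})=\gamma^2$ gives exactly the claimed inequality.

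The main obstacle is bookkeeping rather than anything conceptual. The key point is that the weight on $u$ must be $\gamma_+^{-1}$, so that the coefficient matches the prox step size $\gamma_+/L$. The $\Delta_F(x,u)$ term may be negative, since $u$ is arbitrary, but this causes no trouble because every step is an exact inequality and uses no sign information.
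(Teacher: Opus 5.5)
Your proposal is correct. The paper does not reprove this result; it cites \citet[Theorem 4.22]{d2021acceleration}, and your argument is essentially the standard one-step potential proof behind that theorem: convexity of $f$ at $y$ toward $x$ and $u$ with weights $\gamma^2$ and $\gamma_+=\gamma_+^2-\gamma^2$, the descent lemma between $y$ and $x^+$, prox-optimality of $z^+$, the three-point identity, and induction over the $n$ iterations. It is also the same computation the paper carries out in \cref{lem:fista_property}: via \cref{prop:apg1_equivalence}, $x^+=T_{f,\tilde g_{x,\gamma_+}}(y)$, and comparing against $(1-\gamma_+^{-1})x+\gamma_+^{-1}u$ in the $L$-strongly convex model gives your one-step inequality directly.
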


This proposition looks similar to \cref{prop:pgd_potential}, but there is a crucial difference.
For the sequence $(x^i)_{i\geq0}$ generated by PG, we fix $x=x^i$ for some $i\geq0$.
\cref{prop:pgd_potential} implies that $\Psi_{F,p+n}(T_{f,g}^n(x),u)\leq\Psi_{F,p}(x,u)$ for \emph{any} $p\geq0$ and $n\geq1$, thus taking $p=0$ yields
\begin{align}
    \Delta_F(T_{f,g}^n(x),u)
    \leq\frac{\Psi_{F,n}(T_{f,g}^n(x),u)}n
    &\leq\frac{LD^2}{2n},\label{eq:pg_explanation1}
\end{align}
where we used \cref{asm:boundedness}.
On the other hand, consider the sequence $(x^i,z^i)_{i\geq0}$ generated by $\apg_1$ and fix $(x,z)=(x^i,z^i)$ for some $i\geq0$.
In the setting of \cref{prop:fista_potential_decrease}, we have $\tilde\Phi_{F,\phi^n(\gamma)}(x^+,z^+,u)\leq\tilde\Phi_{F,\gamma}(x,z,u)$ for the \emph{specific} $\gamma$ used to update $(x^i,z^i)$.
Thus, we only have
\begin{align}
    \Delta_F(x^+,u)
    &\leq\frac{\gamma^2}{(\phi^n(\gamma))^2}\Delta_F(x,u)+\frac{LD^2}{2(\phi^n(\gamma))^2}\label{eq:apg1_explanation1}
\end{align}
for the specific $\gamma$, where we used \cref{asm:boundedness}.
Since $\phi^n(\gamma)\geq\gamma+\frac n2$, this guarantees an accelerated convergence rate of $O(n^{-2})$ for any fixed initial $\gamma$, but the coefficient of the $O(n^{-2})$ term depends on the initial $\gamma$.
This difference makes it difficult to guarantee the tracking performance of an $\apg_1$-based TVCO algorithm.

When trying to apply $\apg_1$ to TVCO, one may consider resetting $\gamma$ to $0$ at the beginning of each round, but this does not give a good guarantee. 
This is because it only yields a convergence rate for 1 round, while \cref{eq:tvpgd_new_error_proof_sketch} is derived from a convergence rate for $m$ consecutive rounds.

The above observation motivates us to design a variant of $\apg_1$ whose potential function does not depend on iteration-dependent parameters such as $\gamma$ in $\apg_1$.
To achieve this, we replace the momentum parameter $(\gamma_--1)/\gamma$ in \cref{eq:apg1_equivalence_y}, defined by the recursion \cref{eq:apg1_equivalence_gamma}, with a parameter $\beta\in(0,1]$ computed from the iterates.
Let $\gamma_n\coloneqq\phi^n(0)$ be the parameter $\gamma$ after $n$ iterations of \cref{eq:apg1_equivalence_gamma} starting from $\gamma_0=0$.
Since $\gamma+\tfrac12\leq\phi(\gamma)\leq\gamma+1$, we have $(\gamma_{n-1}-1)/\gamma_n=1-\Theta(n^{-1})$.
On the other hand, FISTA with a specific momentum parameter guarantees $\norm{x^n-x^{n-1}}=O(n^{-1})$~\citep[Corollary 3.2]{chambolle2015convergence}, where $x^n$ is the $n$-th iterate of FISTA.
Motivated by this bound, for the sequence $(x^n)_{n\geq0}$ generated by the algorithm we design, we assume that $\norm{x^n-x^{n-1}}=\Theta(D/n)$ in worst-case scenarios, where dependence on $D$ is for normalization.
Under this assumption, defining $\beta_n\coloneqq(1+\tfrac2D\norm{x^n-x^{n-1}})^{-1}$ yields $\beta_n=1-\Theta(n^{-1})$, which is consistent with the corresponding momentum coefficient in $\apg_1$.
This argument is heuristic and does not constitute a formal proof of convergence.

Based on the above heuristic arguments, our variant of $\apg_1$ updates $(x,x^-)$ with $x,2x-x^-\in\mathcal X$ to $(x^+,x)\in\mathcal X^2$ as follows:
\begin{align}
    \beta
    &=\paren{1+\tfrac2D\norm{x-x^-}}^{-1},\label{eq:apg2_equivalence_beta}\\
    y
    &=x+\beta(x-x^-),\label{eq:apg2_equivalence_y}\\
    x^+
    &=T_{f,\tilde g_{x,2}}(y).\label{eq:apg2_equivalence_x}
\end{align}
The parameter $\gamma$ of $\tilde g_{x,\gamma}$ in \cref{eq:apg1_equivalence_x} is replaced with $2$ in \cref{eq:apg2_equivalence_x} for technical reasons.
In particular, when $g$ is the indicator function of $\mathcal X$, then $\tilde g_{x,2}$ is the indicator function of $(\mathcal X+x)/2\subset\mathcal X$.

The updates \cref{eq:apg2_equivalence_beta,eq:apg2_equivalence_y,eq:apg2_equivalence_x} have an equivalent formulation given in the following proposition.
The proof is given in \cref{sec:apg2_equivalence_proof}.
\begin{proposition}\label{prop:apg2_equivalence}
    Let $g\in\mathcal G$ satisfy \cref{asm:boundedness} and let $x, z\in\mathcal X$, $x^{-}\coloneqq 2x-z$, and $f\in\mathcal F_L$.
    Then, the updates \cref{eq:apg2_equivalence_beta,eq:apg2_equivalence_y,eq:apg2_equivalence_x} and $z^+=2x^+-x$ are equivalent to
    \begin{align}
        \beta&=\paren{1+\tfrac2D\norm{x-z}}^{-1},\label{eq:apg2_beta}\\
        y&=(1-\beta)x+\beta z,\label{eq:apg2_y}\\
        z^{+}&=\prox_{2L^{-1}g}(2y-x-\tfrac2L\nabla f(y)),\label{eq:apg2_z}\\
        x^{+}&=\tfrac12(x+z^{+}).\label{eq:apg2_x}
    \end{align}
\end{proposition}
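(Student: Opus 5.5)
The plan is to check the equivalence one line at a time. The only real content is how the prox of $\tilde g_{x,2}$ turns into a prox of $g$ under an affine change of variables.

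First, substituting $x^-=2x-z$ gives $x-x^-=z-x$. Hence \cref{eq:apg2_equivalence_beta} becomes \cref{eq:apg2_beta}. Likewise \cref{eq:apg2_equivalence_y} becomes $y=x+\beta(z-x)=(1-\beta)x+\beta z$, which is \cref{eq:apg2_y}. We also note that $y\in\mathcal X$, being a convex combination of $x,z\in\mathcal X$ with $\beta\in(0,1]$. So $\nabla f(y)$ is meaningful for $f\in\mathcal F_L$, and the hypothesis $x,2x-x^-\in\mathcal X$ of the original update holds because $2x-x^-=z$.

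The main step is to compute $x^+=T_{f,\tilde g_{x,2}}(y)=\prox_{L^{-1}\tilde g_{x,2}}(w)$ with $w\coloneqq y-L^{-1}\nabla f(y)$. By definition, $\tilde g_{x,2}(v)=\tfrac12 g(x)+\tfrac12 g(2v-x)$, so $x^+$ is the minimizer over $v$ of
\begin{align}
\tfrac{1}{2L}g(2v-x)+\tfrac12\norm{v-w}^2 ,
\end{align}
after dropping the constant $\tfrac{1}{2L}g(x)$. Change variables to $s=2v-x$, which is a bijection, so that $v=(x+s)/2$. The objective becomes
\begin{align}
\tfrac{1}{2L}g(s)+\tfrac18\norm{s-(2w-x)}^2 .
\end{align}
Multiplying by $4$, this is minimized at $s^\star=\prox_{2L^{-1}g}(2w-x)$. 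Since $2w-x=2y-x-\tfrac2L\nabla f(y)$, we get $s^\star=z^+$ as in \cref{eq:apg2_z}, and $x^+=\tfrac12(x+s^\star)$, which is \cref{eq:apg2_x}.

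Uniqueness of both minimizers (strong convexity of the prox objectives, since $g\in\mathcal G$) lets us identify the two argmins. Finally, $z^+=2x^+-x$ holds by construction, which matches the stated definition of $z^+$, and $z^+\in\mathcal X=\dom g$ because it is a prox output of $g$.

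For the converse direction we run the same substitutions backwards. The only potential obstacle is that the change of variables is affine and bijective and that the properness of $g$ is preserved; both are routine. So I expect no serious difficulty beyond keeping the constants ($2$ versus $1/2$, $L^{-1}$ versus $2L^{-1}$) correct.
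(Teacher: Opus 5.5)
Your proposal is correct and follows essentially the same route as the paper: substitute $x^-=2x-z$ to match $\beta$ and $y$, then use the affine change of variables $v=(x+s)/2$ to identify $T_{f,\tilde g_{x,2}}(y)$ with $\tfrac12\bigl(x+\prox_{2L^{-1}g}(2y-x-\tfrac2L\nabla f(y))\bigr)$, from which both directions follow. The paper runs the same computation starting from $\tfrac12(x+z^+)$ rather than from the prox of $\tilde g_{x,2}$, which is only a cosmetic difference.
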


The resulting algorithm is summarized in \cref{alg:apg2} and we denote it by $\apg_2$.
Let $S_{f}\colon\mathcal X^2\to\mathcal X^2$ be the $\apg_2$ operator defined as 
\begin{align}
    S_{f}(x,z)
    &\coloneqq\apg_2(f,x,z,1,L,D),
\end{align}
that is, $S_{f}$ maps $(x,z)$ to $(x^+,z^+)$ according to \cref{eq:apg2_beta,eq:apg2_y,eq:apg2_z,eq:apg2_x}.

\begin{algorithm}[t]
  \caption{$\apg_2(f,x,z,n,L,D)$}
  \label{alg:apg2}
  \begin{algorithmic}[1]
    \Require $f\in\mathcal F_L$, $x,z\in \mathcal X$, $n\geq0$, $L,D>0$
    \For{$i=1,\ldots,n$}
        \State $\beta\gets(1+\tfrac2D\norm{x-z})^{-1}$
        \State $y\gets(1-\beta)x+\beta z$
        \State $z\gets\prox_{2L^{-1}g}(2y-x-\tfrac2L\nabla f(y))$
        \State $x\gets (x+z)/2$
    \EndFor
    \Ensure $x$, $z$
  \end{algorithmic}
\end{algorithm}

We conclude this section by presenting a potential function for $\apg_2$.
For $p\geq5$, let $\Phi_{F,p}\colon\mathcal X^3\to\R$ be the potential function defined as
\begin{align}
    &\Phi_{F,p}(x,z,u)\\
    &\coloneqq p^2\Delta_F(x,u)+\frac L2\norm{3(x-u)+(p-3)(z-x)}^2.\label{eq:potential_function}
\end{align}
The following proposition gives a recursion for the potential function under \cref{alg:apg2}.
The proof is given in \cref{sec:fista_new_potential_decrease_proof}.
\begin{proposition}\label{prop:fista_new_potential_decrease}
    Let $g\in\mathcal G$ be a function satisfying \cref{asm:boundedness}, $f\in\mathcal F_L$, $x,z,u\in\mathcal X$, and $p\geq5$.
    Then, we have 
    \begin{align}
        \Phi_{F,p+1}(S_{f}(x,z),u)
        &\leq\theta\Phi_{F,p}(x,z,u)+30(1-\theta)LD^2,
    \end{align}
    where $\theta\coloneqq(p+1)(p-2)p^{-2}\in(0,1)$.
\end{proposition}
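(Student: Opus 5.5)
Set $w\coloneqq 3(x-u)+(p-3)(z-x)$, $w^+\coloneqq 3(x^+-u)+(p-2)(z^+-x^+)$, $a\coloneqq\Delta_F(x,u)$, $a^+\coloneqq\Delta_F(x^+,u)$. The target inequality is $(p+1)^2a^+ + \frac L2\|w^+\|^2\le \theta\bigl(p^2a+\frac L2\|w\|^2\bigr)+30(1-\theta)LD^2$, where $1-\theta=(p+2)/p^2$. We have $p^2\theta=(p+1)(p-2)$. So the function-value part is a standard $a^+$-versus-$a$ comparison with weights $(p+1)^2$ and $(p+1)(p-2)$, with difference $3(p+1)$. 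The approach follows the FISTA potential argument (as in \citet[Theorem 4.22]{d2021acceleration}): combine a single prox-gradient descent inequality evaluated at two comparison points.

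\textbf{Step 1: Descent inequality.} By \cref{prop:apg2_equivalence}, $x^+=T_{f,\tilde g_{x,2}}(y)$. The standard prox-grad inequality for the composite $f+\tilde g_{x,2}$, with $L$-smoothness of $f$, gives for every $v\in\mathcal X$
\begin{align}
f(x^+)+\tilde g_{x,2}(x^+)\le f(y)+\langle\nabla f(y),v-y\rangle+\tilde g_{x,2}(v)+\tfrac L2\|v-y\|^2-\tfrac L2\|v-x^+\|^2,
\end{align}
valid whenever $2v-x\in\mathcal X$. Note $g(x^+)\le\tilde g_{x,2}(x^+)=\frac12 g(x)+\frac12 g(z^+)$ by convexity. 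Take $v=\lambda u+(1-\lambda)x$ with $\lambda\coloneqq 3/(p+1)\le\frac12$, so that $v\in(\mathcal X+x)/2$. Convexity of $f$ and $g$ then yields
\begin{align}
a^+\le(1-\lambda)a+\tfrac L2\bigl(\|v-y\|^2-\|v-x^+\|^2\bigr).
\end{align}
Multiplying by $(p+1)^2$ gives $(p+1)^2(1-\lambda)=(p+1)(p-2)=p^2\theta$. This is exactly why the coefficient $3$ and the shift $p-3\to p-2$ appear in $\Phi$.

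\textbf{Step 2: Distance terms.} Rewrite $(p+1)^2\frac L2\bigl(\|v-y\|^2-\|v-x^+\|^2\bigr)$ using $(p+1)(x^+-v)=3(x^+-u)+(p-2)(x^+-x)$. Since $z^+-x^+=x^+-x$, this equals $w^+$. Hence the $-\frac L2\|w^+\|^2$ term appears exactly. Similarly, $(p+1)(y-v)=3(x-u)+(p+1)\beta(z-x)$.

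\textbf{Step 3: Comparing with $\theta\|w\|^2$.} It remains to show
\begin{align}
\|3(x-u)+(p+1)\beta(z-x)\|^2\le\theta\|3(x-u)+(p-3)(z-x)\|^2+60(1-\theta)D^2 .
\end{align}
This is where the state-dependent choice $\beta=(1+\tfrac2D\|x-z\|)^{-1}$ enters, and I expect it to be the main obstacle. Write $e=z-x$ and $s=\|e\|\le D$. The mismatch is $\bigl((p+1)\beta-\sqrt\theta(p-3)\bigr)e$ together with the factor $(1-\theta)\|3(x-u)\|^2\le 9(1-\theta)D^2$. Since $\sqrt\theta\approx 1-\frac{3}{2p}$, we need $(p+1)\beta\approx p-3$ up to $O(1)$ multiples.

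\textbf{Step 4: Case split.} Expand the square and split on the size of $s$ relative to $D/p$.
\begin{itemize}
\item If $s\lesssim D/p$, then $\|(p+1)\beta e\|$ and $\|(p-3)e\|$ are $O(D)$. All error terms are $O(D^2/p)=O((1-\theta)D^2)$, which the constant $60$ absorbs.
\item If $s\gtrsim D/p$, then $\beta\le 1-cs/D$ makes $(p+1)\beta\le p-3$ plausible up to a controllable margin. I would use $\|(1-\theta)\cdot\|\le$ convexity/Young with weight $\theta$ versus $1-\theta$: write $3(x-u)+(p+1)\beta e=\theta^{1/2}w+r$ and bound $\|r\|^2/(1-\theta)$ by a multiple of $D^2$.
\end{itemize}
Choosing Young's parameter $\theta$ gives $\|\theta^{1/2}w+r\|^2\le\theta\|w\|^2+\ldots$. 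More precisely, I would apply $\|A+B\|^2\le\frac{1}{\theta'}\|A\|^2+\frac1{1-\theta'}\|B\|^2$ with $A=\theta w$ and $B=$ the remainder, and check that $\|B\|^2\lesssim(1-\theta)^2D^2$. The constants will come from $p\ge5$ and $s\le D$.
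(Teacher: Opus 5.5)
\textbf{Gap in Step~4.} Your Steps 1--3 are correct and follow the paper's route. The comparison point $(1-\lambda)x+\lambda u$ with $\lambda=3/(p+1)$ is exactly \cref{lem:fista_property} with $\alpha=3/(p+1)$. After multiplying by $(p+1)^2$, the claim reduces to
\[
\|3(x-u)+(p+1)\beta(z-x)\|^2\le\theta\|3(x-u)+(p-3)(z-x)\|^2+60(1-\theta)D^2,
\]
which is \cref{lem:geometric_inequality}. This inequality is the whole difficulty, and Step~4 does not prove it. Worse, the mechanism you propose for the regime $s=\|z-x\|\gg D/p$ fails.

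With $A=\theta w$, your remainder is $B=3(1-\theta)(x-u)+\bigl((p+1)\beta-\theta(p-3)\bigr)(z-x)$. The scalar $(p+1)\beta-\theta(p-3)$ is not small in this regime. At $s=D$ we have $\beta=1/3$ and $\theta(p-3)=p-4+O(1/p)$, so the coefficient is about $-2p/3$. Then $\|B\|^2/(1-\theta)$ is of order $p^3D^2$ rather than $O(D^2)$. For large $p$, the bound already exceeds the allowed $60(1-\theta)D^2$ once $s$ is a moderate constant multiple of $D/p$.

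Your heuristic that one needs $(p+1)\beta\approx p-3$ is also false here. The inequality holds for a different reason: the coefficient $(p+1)^2\beta^2-\theta(p-3)^2$ of $\|z-x\|^2$ becomes strongly negative. In your notation, $\langle w,B\rangle$ is large and negative. This negative term is what must absorb the cross term $6\bigl((p+1)\beta-\theta(p-3)\bigr)\langle x-u,z-x\rangle$. Young's inequality in the form $\|\theta w+B\|^2\le\theta\|w\|^2+\|B\|^2/(1-\theta)$ throws away exactly this favorable term.

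\textbf{What is needed.} You need a quantitative argument, uniform in $p\ge5$ and $s\in[0,D]$, with explicit constants. Asymptotics like $\sqrt\theta\approx1-\tfrac{3}{2p}$ do not suffice, because $p=5$ is allowed and the constant $60$ is part of the claim. In the transition zone $s\asymp D/p$, none of the terms is negligible. The paper handles all regimes at once. Normalize $D=1$ and write $v=x-u$, $e=z-x$, $a=(p+1)\beta-\theta(p-3)$, $b=(p+1)^2\beta^2-\theta(p-3)^2$. The paper then proceeds in three steps:
\begin{enumerate}
\item It bounds $6a\langle v,e\rangle\le\frac{9\|v\|^2s}{1+s}+a^2s(1+s)$, with the AM--GM weight $1+s$ tuned to the relation $\beta(1+2s)=1$.
\item It uses the exact identity $a^2(1+s)+bs=\beta C^2-2\beta\theta(p-3)Cs-\theta(1-\theta)(p-3)^2s$, where $C=p+1-\theta(p-3)\in[4,5]$.
\item It completes the square in $(p-3)s$, obtaining the bound $9+\beta(50+80s)\le60$ in units of $1-\theta$.
\end{enumerate}
The negative terms $-2\beta\theta(p-3)Cs^2-\theta(1-\theta)(p-3)^2s^2$ produced by this identity are exactly what your large-$s$ case is missing. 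Without an argument of this kind, or a fully worked case analysis with explicit constants, Step~4 remains a heuristic and the proposition is not established.
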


The potential $\Phi_{F,p}$ and the recursion in \cref{prop:fista_new_potential_decrease} are independent of iteration-dependent algorithmic parameters.

\subsection{TV-APG}\label{sec:tvapg}
Time-varying APG (TV-APG) iteratively applies the $\apg_2$ operator to the time-varying objective function $f_k$ at each round $k$, i.e., $(x_{k+1},z_{k+1})=S_{f_k}^n(x_k,z_k)$.
The algorithm is summarized in \cref{alg:tvfista_new}.
The first step of TV-APG is a PG step, which allows us to bound the initial value of the potential function.
Indeed, taking $p=0$ in the potential recursion of PG in \cref{prop:pgd_potential} gives an upper bound on the potential function as in \cref{eq:pg_explanation1}, while the potential recursion of $\apg_2$ in \cref{prop:fista_new_potential_decrease} only holds for $p\geq5$.
See \cref{lem:uniform_bound} for details.

\begin{algorithm}[t]
  \caption{TV-APG}
  \label{alg:tvfista_new}
  \begin{algorithmic}[1]
    \Require $x_0\in\mathcal X$, $n\geq1$, $L,D>0$
    \State $x_0^+\gets z_0^+\gets\prox_{L^{-1}g}(x_0-L^{-1}\nabla f_0(x_0))$
    \State $(x_1,z_1)\gets\apg_2(f_0,x_0^+,z_0^+,n-1,L,D)$
    \For{$k=1,2,\ldots$}
        \State $(x_{k+1},z_{k+1})\gets\apg_2(f_k,x_k,z_k,n,L,D)$
    \EndFor
  \end{algorithmic}
\end{algorithm}

TV-APG can be viewed as an extrapolation-based prediction-correction method.
For comparison, consider the simplified prediction-correction algorithm of \citet{lin2019simplified} for unconstrained strongly convex problems.
This method also uses extrapolation and updates the solution according to
\begin{align}
    \hat x_{k+1}&=x_k+(x_k-x_{k-1}),\label{eq:spc_prediction}\\
    x_{k+1}&=(I-L^{-1}\nabla f_{k+1})^n(\hat x_{k+1}).
\end{align}
The simplified prediction-correction algorithm uses the prediction term $x_k-x_{k-1}$ in \cref{eq:spc_prediction}.
The TV-APG momentum term $\beta(x-x^{-})$ in the update \cref{eq:apg2_equivalence_y} can be viewed as a discounted version of this term.

The main result of this section is an upper bound on the function value gap of TV-APG.
The proof follows the strategy used for \cref{thm:tvpgd_new_error}, starting from the potential recursion established in \cref{prop:fista_new_potential_decrease}.
The proof also requires a uniform bound on the potential function, which we discuss in \cref{sec:uniform_potential_bound}.

\begin{theorem}\label{thm:tvfista_new_error}
    Suppose \cref{asm:problem} holds.
    Then, \cref{alg:tvfista_new} guarantees that
    \begin{align}
        \Delta_k
        &\leq\frac{30LD^2}{n^2k^2}+4\sqrt[3]{\frac{LD^2\delta^2}{n^2}}+5\delta\label{eq:tvfista_new_error}
    \end{align}
    for all $k\geq1$.
\end{theorem}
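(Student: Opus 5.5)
We follow the template of the proof of \cref{thm:tvpgd_new_error}, with the recursion of \cref{prop:fista_new_potential_decrease} in place of \cref{prop:pgd_potential}. Fix $u\in\mathcal X$ and write $\Phi_{k}^{(p)}\coloneqq\Phi_{F_k,p}(x_k,z_k,u)$, with the intermediate inner iterates of round $k$ indexed likewise.

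\textbf{Step 1: transferring the potential across rounds.} For fixed $(x,z,u)\in\mathcal X^3$, only the term $p^2\Delta_{F}(x,u)$ of $\Phi_{F,p}$ depends on $F$. By \cref{asm:t_lipschitz} (the $g$ parts cancel),
\begin{align}
\Phi_{F_{k+1},p}(x,z,u)\le\Phi_{F_k,p}(x,z,u)+2p^2\delta .
\end{align}
Combining this with \cref{prop:fista_new_potential_decrease} gives one inner step of the form
\begin{align}
\Phi_{F,p+1}(S_f(x,z),u)\le\theta_p\Phi_{F,p}(x,z,u)+30(1-\theta_p)LD^2,\qquad \theta_p=\frac{(p+1)(p-2)}{p^2}.
\end{align}
Chaining over $m$ rounds of $n$ inner steps each, with $p$ running from $p_0$ to $p_0+nm$, switching from $F_{k-j}$ to $F_{k-j+1}$ costs $2p^2\delta$ at each round boundary. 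The products $\prod\theta_p$ telescope, since $\theta_p=\frac{p+1}{p}\cdot\frac{p-2}{p}$, and yield a factor of order $p_0^2/p^2$. Discounting by these products, the accumulated variation is at most about $\sum_{j\le m}2(p_0+nj)^2\delta\le 2(p_0+nm)^2 m\delta$ up to constants. Dividing by $P^2=(p_0+nm)^2$ then gives
\begin{align}
\Delta_{F_k}(x_k,u)\lesssim\frac{p_0^2\,\Phi_{k-m}^{(p_0)}}{P^2\cdot p_0^2}+\frac{30LD^2}{P^2}\cdot(\text{const})+c\,m\delta .
\end{align}
Here the $30(1-\theta)LD^2$ terms sum to $O(LD^2)$ relative to the $P^2$ normalization, because $\Phi$ behaves like an averaged quantity with fixed point $30LD^2$.

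\textbf{Step 2: uniform bound on the potential (main obstacle).} We need $\Phi_{F_{k},p_0}(x_k,z_k,u)=O(LD^2+p_0^2\delta)$ uniformly in $k$ for a fixed $p_0\ge5$, e.g.\ $p_0=5$. This is the content of \cref{lem:uniform_bound}. At $k=0$, the initial PG step with \cref{prop:pgd_potential} at $p=0$ gives $\Delta\le LD^2/2$ and $z=x$, so $\Phi_{F_0,5}\le 25LD^2/2+\frac{9L}{2}D^2$. For general $k$, $\Delta_{F_k}(x_k,u)\le \Delta_k$ is itself bounded, and $\|3(x-u)+(p-3)(z-x)\|$ is bounded via the compactness of $\mathcal X$ since $x,z\in\mathcal X$. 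Hence $\Phi_{F_k,5}(x_k,z_k,u)\le 25\sup\Delta+\frac L2(3D+2D)^2$. The delicate part is the circularity in bounding $\sup_k\Delta_k$. To break it, we will use the recursion itself as a contraction: since $\theta<1$ and the fixed point is $30LD^2$ plus the variation term, an induction shows $\Phi^{(p)}\le\max\{\Phi^{(5)}_{\text{init}},\,30LD^2\}+O(p^2\delta)$ cumulatively. Restarting the analysis at each round with $p=5$ via monotonic rescaling, we would first try to show that $\Delta_k\le C(LD^2+\delta)$ directly by running the chain from round $0$.

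\textbf{Step 3: assembling the bound.} With $p_0=5$ and $P\approx nm$, Step 1 gives
\begin{align}
\Delta_k\le \frac{30LD^2}{n^2m^2}+c\,m\delta+O(\delta).
\end{align}
For the transient regime, take $m=k$ and use the exact initialization at round $0$, which gives the $30LD^2/(n^2k^2)$ term. For the asymptotic regime, take $m=\min\{k,\lceil (LD^2/(n^2\delta))^{1/3}\rceil\}$. Balancing $LD^2/(n^2m^2)$ against $m\delta$ gives $(LD^2\delta^2/n^2)^{1/3}$, and the ceiling contributes the additive $O(\delta)$. Tracking constants should produce $4\sqrt[3]{LD^2\delta^2/n^2}+5\delta$. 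Finally, choose $u\in\mathcal X^*_{F_k}$.
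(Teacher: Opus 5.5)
\textbf{The gap is in Step~2.} You correctly flag it as the main obstacle, but you do not resolve it. To start your chain at round $k-m$, you need $\Phi_{F_{k-m},5}(x_{k-m},z_{k-m},u)$, and hence $\Delta_{F_{k-m}}(x_{k-m},u)$, bounded independently of $k$. Neither of your remedies supplies this.
\begin{itemize}
\item Running the potential chain from round $0$ only yields $\Delta_{F_k}(x_k,u)\le 30LD^2/(n^2k^2)+(k/2+O(1))\delta$. This grows linearly in $k$.
\item The claimed induction $\Phi^{(p)}\le\max\{\Phi^{(5)}_{\mathrm{init}},30LD^2\}+O(p^2\delta)$ does not follow from the recursion. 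Each round-boundary injection $2p^2\delta$ is damped only by $\prod\theta\approx p_{\mathrm{old}}/p$, so the injections accumulate to order $p^3\delta/n$, not $p^2\delta$.
\item There is no ``monotonic rescaling'' from $\Phi_{F,p}$ to $\Phi_{F,5}$, since $\Phi_{F,p}$ is not monotone in $p$.
\end{itemize}
What is missing is a contraction of the gap itself that does not pass through the growing index. The paper gets it from \cref{lem:fista_property} with $\alpha=\tfrac12$. Since $y=(1-\beta)x+\beta z$ and $\tfrac12(x+u)$ both lie in $\mathcal X$, one step of $S_f$ gives $\Delta_F(x^+,u)\le\tfrac12\Delta_F(x,u)+\tfrac{LD^2}{2}$ (\cref{lem:fista_loose_recursion}). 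Induction over rounds, using the initial PG step and the $2\delta$ transfer, then gives $\Delta_{F_k}(x_k^+,u)\le LD^2+2\delta$ after the first inner step of every round. Hence $\Phi_{F_k,5}(x_k^+,z_k^+,u)\le 30LD^2+50\delta$, and the chain restarts at $p=5$ there (\cref{lem:uniform_bound,lem:uniform_potential_bound}).

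Your own restart idea could also be closed. If $\Delta_{F_j}(x_j,\cdot)\le B$, restart at $p=5$ using $\Phi_{F_j,5}(x_j,z_j,u)\le 25B+\tfrac92LD^2$ and run one round of the recursion. This gives $\Delta_{F_j}(x_{j+1},u)\le \frac{60B}{(n+3)(n+4)(n+5)}+LD^2$, where the coefficient of $B$ is at most $\tfrac12$. As written, however, this step is absent.

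\textbf{Step~1 also needs two quantitative fixes to reach the stated constants.}
\begin{itemize}
\item The product $\prod_{p=p_0}^{P-1}\theta_p$ telescopes to $\rho_{p_0}/\rho_P$ with $\rho_p=(p-1)(p-2)/p$. This is of order $p_0/P$, not $p_0^2/P^2$. The clean bookkeeping is to multiply the recursion by $\rho_p$, as in \cref{lem:fista_new_potential_decrease_iterative}.
\item Your undiscounted bound $2P^2m\delta$ on the injected variation gives $2m\delta$ after dividing by $P^2$. Balancing $30LD^2/(n^2m^2)$ against $2m\delta$ gives at best the constant $3\sqrt[3]{30}\approx 9.3$ in front of $\sqrt[3]{LD^2\delta^2/n^2}$, so the constant $4$ cannot come out.
\item With your choice $m=\lceil\sqrt[3]{LD^2/(n^2\delta)}\rceil$, the first term alone can be as large as $30\sqrt[3]{LD^2\delta^2/n^2}$.
\end{itemize}
With the $\rho$-weights, the injection at the $i$th round of the window contributes $2\delta\,\frac{(ni+2)(ni+3)(ni+4)}{(nm+2)(nm+3)(nm+4)}\le 2\delta\,\frac{(i+2)(i+3)(i+4)}{(m+2)(m+3)(m+4)}$. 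Summing these, together with the $120\delta$ remainder from the uniform bound, gives $(m/2+4)\delta$. The choice $m=\min\{k,\lceil 4\sqrt[3]{LD^2/(n^2\delta)}\rceil\}$ then yields exactly \cref{eq:tvfista_new_error}.
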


\begin{proof}[Proof sketch]
    Fix $u\in\mathcal X$ arbitrarily.
    From \cref{asm:t_lipschitz,prop:fista_new_potential_decrease}, we can derive
    \begin{align}
        \rho_{4+n(i+1)}\Phi_{k-m+i+1,i+1}
        &\leq\rho_{4+ni}\Phi_{k-m+i,i}+C_{L,D,\delta,n,i}
    \end{align}
    for all $1\leq i< m\leq k$, where $\rho_i\coloneqq(i-1)(i-2)/i$, $\Phi_{k,i}\coloneqq\Phi_{F_k,4+ni}(x_k,z_k,u)$, and $C_{L,D,\delta,n,i}$ is a constant depending on $L$, $D$, $\delta$, $n$, and $i$.
    Using this recursion, we can obtain
    \begin{align}
        \Delta_{F_k}(x_k,u)
        &\leq\frac{30LD^2}{n^2m^2}+\paren*{\frac m2+4}\delta\label{eq:tvfista_new_error_proof_sketch}
    \end{align}
    for all $1\leq m\leq k$, where we used the uniform upper bound on $\Phi_{k-m+1,1}$ described in \cref{sec:uniform_potential_bound}.
    Taking $u\in \mathcal X_{F_k}^*$ and $m\coloneqq\min\set{k,\ceil{4\sqrt[3]{LD^2/(n^2\delta)}}}$ completes the proof.
    See \cref{sec:tvfista_new_error_proof} for the full proof.
\end{proof}

The upper bound on TV-APG's function value gap $\Delta_k$ in \cref{thm:tvfista_new_error} decreases at the rate $O(k^{-2})$ toward an asymptotic error bound.
Taking $k\to\infty$ gives a bound of $O((\delta/n)^{2/3})$ for small $\delta$.
TV-APG therefore improves both the decay rate and the asymptotic error bound over TV-PG.
In the static convex setting, letting $\delta\to0$ shows that $\apg_2$ achieves the same $O(n^{-2})$ convergence rate as $\apg_1$~\citep[Corollary 4.23]{d2021acceleration}.

\begin{corollary}\label{cor:fista_convergence}
    Let $g\in\mathcal G$ be a function satisfying \cref{asm:boundedness}, $f\in\mathcal F_L$, and $x^0\in\mathcal X$.
    Let $x^n$ be the iterate obtained after $n-1$ iterations of \cref{alg:apg2} with initial point $x^1=z^1=T_{f,g}(x^0)$, i.e., $(x^n,z^n)=S_{f}^{n-1}(x^1,z^1)$.
    Then, $\Delta_F(x^n)\leq30LD^2/n^2$ for all $n\geq1$.
\end{corollary}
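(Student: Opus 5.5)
The plan is to run the potential recursion of \cref{prop:fista_new_potential_decrease} in the static setting, starting from the initial PG step, and then unroll it. We fix $u\in\mathcal X_F^*$ (nonempty, since $\mathcal X$ is compact and $F$ is closed proper convex). The case $n=1$ is immediate from \cref{eq:pg_explanation1}: $\Delta_F(x^1,u)\le LD^2/2\le 30LD^2$. For $n\ge2$ we track $\Phi_i\coloneqq\Phi_{F,p_i}(x^i,z^i,u)$ with $p_i\coloneqq 4+i$, so that $p_i\ge5$ for all $i\ge1$ and \cref{prop:fista_new_potential_decrease} applies at every step from $i=1$ onward.

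First we bound the initial potential, which is what \cref{lem:uniform_bound} is designed to do. Because $z^1=x^1$, the second term of $\Phi_{F,5}(x^1,x^1,u)$ is just $\frac{9L}{2}\|x^1-u\|^2$. Applying \cref{prop:pgd_potential} with $p=0$ gives $\Delta_F(x^1,u)+\frac L2\|x^1-u\|^2\le\frac L2\|x^0-u\|^2\le \frac{LD^2}{2}$. Hence
\[
\Phi_1=25\Delta_F(x^1,u)+\tfrac{9L}{2}\|x^1-u\|^2\le 25\cdot\tfrac{LD^2}{2}=\tfrac{25}{2}LD^2,
\]
which is comfortably below $30LD^2$.

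Next comes the induction. The recursion reads $\Phi_{i+1}\le\theta_i\Phi_i+30(1-\theta_i)LD^2$ with $\theta_i\in(0,1)$. This is a convex combination of $\Phi_i$ and $30LD^2$, so if $\Phi_i\le 30LD^2$ then $\Phi_{i+1}\le 30LD^2$. Together with the base case, we get $\Phi_i\le30LD^2$ for every $i\ge1$. Since the second term of the potential is nonnegative, $p_n^2\Delta_F(x^n,u)\le\Phi_n\le 30LD^2$, and therefore
\[
\Delta_F(x^n)\le\frac{30LD^2}{(n+4)^2}\le\frac{30LD^2}{n^2}.
\]

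I expect little difficulty here. The one point to check is the indexing: the recursion must only be invoked at $p\ge5$, which is why the offset $p_i=4+i$ is chosen. There is also some slack, because the convex-combination structure alone yields the uniform bound $30LD^2$ and we never need the finer telescoping with the weights $\rho$ used in \cref{thm:tvfista_new_error}. An alternative route is to specialize \cref{thm:tvfista_new_error} to $f_k\equiv f$ with $\delta\to0$, but the direct argument above avoids the mismatch between the round-count indexing and $n$.
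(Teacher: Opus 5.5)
Your proof is correct, and it is more direct than the paper's. The paper gets the corollary in one line: it specializes \cref{thm:tvfista_new_error} to $f_k\equiv f$ and $k=1$ (so $m=1$), then lets $\delta\to0$. This is legitimate because a static sequence satisfies \cref{asm:t_lipschitz} for every $\delta>0$.

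That route therefore rests on the time-varying machinery: \cref{lem:uniform_bound,lem:uniform_potential_bound}, the $\rho$-weighted telescoping of \cref{lem:fista_new_potential_decrease_iterative}, and a limit in $\delta$. It finishes by loosening $30LD^2/((n+3)(n+4))$ to $30LD^2/n^2$.

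You use only two ingredients. First, \cref{prop:pgd_potential} with $p=0$ gives $\Phi_{F,5}(x^1,x^1,u)\le\frac{25}{2}LD^2$. Second, you observe that the recursion in \cref{prop:fista_new_potential_decrease} is a convex combination with fixed point $30LD^2$, so the set $\{\Phi\le 30LD^2\}$ is forward invariant. In the static case this is exactly what the $\rho$-telescoping produces too, since $\rho_p+n-\frac2p+\frac2{p+n}=\rho_{p+n}$. The weights $\rho_i$ are only needed to accumulate the additive $\delta$-terms across rounds.

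What each route buys:
\begin{itemize}
\item Your argument is self-contained and yields the slightly sharper bound $\Delta_F(x^n)\le 30LD^2/(n+4)^2$.
\item The paper's argument costs nothing once the theorem is proved, and it shows that the tracking bound degrades gracefully to the static $O(n^{-2})$ rate.
\end{itemize}

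One minor correction to your closing remark: there is no indexing mismatch in the paper's route. For $k=1$, the TV-APG iterate $x_1$ is exactly one PG step from $x_0$ followed by $n-1$ steps of \cref{alg:apg2} on $f_0$, which is your $x^n$.
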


\begin{proof}
    Taking $k=1$ and $\delta\to0$ in \cref{thm:tvfista_new_error} gives the desired result.
\end{proof}

In \cref{eq:tvfista_new_error_proof_sketch}, the first term, of order $O(1/(nm)^2)$, comes from the convergence rate of $\apg_2$.
The second term, of order $O(m\delta)$, bounds the error caused by changes in the objective over the last $m$ rounds.
Balancing these terms yields the $O((\delta/n)^{2/3})$ term in \cref{eq:tvfista_new_error}.

\subsection{Estimate-Based Acceleration: TV-APGe}\label{sec:tvapge}
The asymptotic error of $O((\delta/n)^{2/3}+\delta)$ can also be achieved by applying $\apg_1$ (\cref{alg:fista}) with a specific choice of the momentum parameter.
However, it requires an additional assumption.

Consider the following update rule:
\begin{align}
    (x_{k+1},z_{k+1},\underline{\hspace{0.8em}})=\apg_1(f_k,x_k,z_k,\gamma_k,n,L).
\end{align}
At each round, we choose a new momentum parameter $\gamma_k$ so that it does not depend on the accumulated iteration count.
We therefore do not use the output $\phi^n(\gamma_k)$ as $\gamma_{k+1}$ in the next round.
Our analysis motivates the choice $\gamma_k=\sqrt[3]{nLD^2/\delta_k}$, where $\delta_k$ estimates the temporal variability $\delta$ at round $k$.
The estimates form a nondecreasing sequence and satisfy $0<\delta_k\leq\delta$ for all $k\geq0$.
Computing these estimates requires function evaluations, as described in \cref{sec:delta_estimation}.
We call this method TV-APG with estimate (TV-APGe).

TV-APGe guarantees the following upper bound on the asymptotic function value gap.
\begin{theorem}\label{thm:tvfista_final_error_short}
    Suppose \cref{asm:problem} holds.
    Then, TV-APGe guarantees that
    \begin{align}
        \limsup_{k\to\infty}\Delta_k
        &\leq\paren*{\frac12+2\sqrt[3]{\frac{\delta}{\delta_\infty}}}\sqrt[3]{\frac{LD^2\delta^2}{n^2}}+2\delta,\label{eq:tvfista_final_error_asymptotic_short}
    \end{align}
    where $\delta_\infty\coloneqq\lim_{k\to\infty}\delta_k$.
\end{theorem}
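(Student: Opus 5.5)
The plan is to run the potential argument of \cref{prop:fista_potential_decrease} round by round. Since TV-APGe restarts the momentum parameter at $\gamma_k$ in every round, no single potential survives across rounds. So we telescope only the distance part $\frac L2\norm{z-u}^2$ of $\tilde\Phi$ across rounds, and absorb the function-value part through the factor $\Gamma_k\coloneqq\phi^n(\gamma_k)\geq\gamma_k+\frac n2$.

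\textbf{Per-round inequality.} Fix a comparator $u\in\mathcal X$. \cref{prop:fista_potential_decrease} applied to $f_k$ with parameter $\gamma_k$ gives
\begin{align}
  \Gamma_k^2\Delta_{F_k}(x_{k+1},u)+\frac L2\norm{z_{k+1}-u}^2\leq\gamma_k^2\Delta_{F_k}(x_k,u)+\frac L2\norm{z_k-u}^2 .
\end{align}
Next pass from $F_k$ to $F_{k+1}$ using \cref{asm:t_lipschitz}, namely $\Delta_{F_{k+1}}(x_{k+1},u)\leq\Delta_{F_k}(x_{k+1},u)+2\delta$. Write $A_k\coloneqq\Delta_{F_k}(x_k,u)$. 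This yields
\begin{align}
  \Gamma_k^2A_{k+1}\leq\gamma_k^2A_k+2\delta\Gamma_k^2+\frac L2\paren*{\norm{z_k-u}^2-\norm{z_{k+1}-u}^2}.
\end{align}
The key algebraic fact is $\Gamma_k^2-\gamma_k^2\geq n\gamma_k+\frac{n^2}4$. Each round therefore gains a factor $r_k\coloneqq\gamma_k^2/\Gamma_k^2$, with $1-r_k\gtrsim n/\gamma_k$ in the relevant regime $\gamma_k\gg n$.

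\textbf{Unrolling over a window.} Sum the last inequality over a window of $m$ rounds ending at the final round $K$, taking $u\in\mathcal X^*_{F_K}$. Three contributions appear.
\begin{itemize}
  \item The distance terms telescope to at most $\frac L2D^2$ by \cref{asm:boundedness}.
  \item The mismatch terms $(\Gamma_j^2-\gamma_{j+1}^2)A_{j+1}$ are not sign-definite, because $u$ minimizes $F_K$ and not $F_{j+1}$. They are controlled from below by $A_{j+1}\geq-2\delta(K-j-1)$, again by \cref{asm:t_lipschitz}.
  \item The drift terms contribute $2\delta\sum\Gamma_j^2$.
\end{itemize}
Because $\delta_k$ is nondecreasing and converges, $\gamma_k\to\gamma_\infty=\sqrt[3]{nLD^2/\delta_\infty}$. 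For the $\limsup$ we may therefore treat $\gamma_k\equiv\gamma$ up to an arbitrarily small error once $k$ is large. The goal is a bound of the form
\begin{align}
  A_K\lesssim\frac{LD^2}{2\,(\text{accumulated gain})}+\frac{\delta\gamma}{n}+2\delta ,
\end{align}
followed by substituting $\gamma=\gamma_\infty$. The term $\delta\gamma/n$ becomes $2\sqrt[3]{\delta/\delta_\infty}\sqrt[3]{LD^2\delta^2/n^2}$. The $LD^2$ term should become $\frac12\sqrt[3]{LD^2\delta^2/n^2}$, matching \cref{eq:tvfista_final_error_asymptotic_short}. Taking $\limsup_{K\to\infty}$ and using $\Delta_K=\sup_uA_K$ with $u$ a minimizer then finishes the argument.

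\textbf{Main obstacle.} The hard step is obtaining an accumulated gain large enough for the $LD^2$ term to decay like $(\delta/n)^{2/3}$ and not only like $\sqrt{\delta}$. A naive one-round contraction $a_{k+1}\leq r(a_k+2\delta)+\frac{LD^2}{2\Gamma^2}+2\delta$ produces the term $LD^2/(2n\gamma)$. With $\gamma=\gamma_\infty$ this scales as $(L^2D^4\delta_\infty/n^4)^{1/3}$, which has the wrong $n$-dependence relative to the claimed bound. Choosing $\gamma$ to balance the two terms in that naive form degrades the rate to $\sqrt{\delta}$. So the contraction alone is insufficient, and the cross-round telescoping of $\norm{z_j-u}^2$ must really be exploited.

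What I would try first is to keep the weights $\Gamma^2$ and $\gamma^2$ explicit and form the weighted sum $\sum_j r^{K-j}(\cdot)$. The $\frac L2\norm{z-u}^2$ terms then telescope with geometric weights, and I would check whether the effective horizon $\sim\gamma/n$ rounds of $n$ inner steps reproduces the $\Theta(1/(nm)^2)$ versus $\Theta(m\delta)$ balance seen in the proof sketch of \cref{thm:tvfista_new_error}, with $m\asymp\gamma/n$. The exact constants $\frac12$ and $2$, and whether the definition of $\gamma_k$ needs an adjusted power of $n$, should fall out of this bookkeeping. I expect this bookkeeping to be the delicate part of the proof.
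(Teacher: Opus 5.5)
\textbf{There is a genuine gap.} Your per-round inequality and your diagnosis of the naive one-round contraction are both correct. But the step that actually produces the $(\delta/n)^{2/3}$ rate is exactly the one you defer as ``delicate bookkeeping''. The window argument you do spell out would not supply it. In the unweighted window sum, the terms $(\Gamma_j^2-\gamma_{j+1}^2)A_{j+1}$ are where the per-round gain $\gamma_j^2/\Gamma_j^2$ lives. Bounding them below via $A_{j+1}\geq-2\delta(K-j-1)$ and discarding them leaves, with $\gamma$ frozen and $\theta\coloneqq\gamma/\Gamma$, only
\[
A_K\leq\theta^2A_{K-m}+\frac{LD^2}{2\Gamma^2}+2\delta m+(1-\theta^2)\delta m(m-1).
\]
That is a single factor $\theta^2$ for the whole window, plus an extra error growing like $m^2$. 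The best choice is then $m=1$, which is just your naive contraction, with steady state $\paren*{2\delta+LD^2/(2\Gamma^2)}/(1-\theta^2)$ and the wrong $LD^2/(2n\gamma)$ term. Choosing $u\in\mathcal X^*_{F_K}$ up front and controlling the sign of intermediate gaps is also unnecessary.

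\textbf{The missing idea.} Carry the distance term inside the contracted quantity, normalized by $\gamma_k^2$ and shifted by $LD^2/2$. For a fixed arbitrary $u\in\mathcal X$, set
\[
Q_k\coloneqq\gamma_k^{-2}\paren*{\tilde\Phi_{F_k,\gamma_k}(x_k,z_k,u)-\frac{LD^2}2}=\Delta_{F_k}(x_k,u)-\frac L{2\gamma_k^2}\paren*{D^2-\norm{z_k-u}^2}.
\]
Divide your first display by $\Gamma_k^2$ and add the $2\delta$ from \cref{asm:t_lipschitz}. This gives
\[
Q_{k+1}\leq\theta_k^2Q_k+2\delta-\frac L2\paren*{\frac1{\gamma_{k+1}^2}-\frac1{\Gamma_k^2}}\paren*{D^2-\norm{z_{k+1}-u}^2},\qquad\theta_k\coloneqq\frac{\gamma_k}{\Gamma_k}.
\]
The last term is nonpositive since $\norm{z_{k+1}-u}\leq D$ and $\gamma_{k+1}\leq\gamma_k\leq\Gamma_k$; this is where the monotonicity of $\delta_k$ enters. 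The result is a linear contraction with no sign requirement on $Q_k$. Unrolling it and using $\Delta_{F_k}(x_k,u)\leq Q_k+\frac{LD^2}{2\gamma_k^2}$ yields
\[
\Delta_{F_k}(x_k,u)\leq\frac{LD^2}{2\gamma_k^2}+\Delta_0\prod_{i=0}^{k-1}\theta_i^2+2\delta\sum_{i=0}^{k-1}\prod_{j=i+1}^{k-1}\theta_j^2 .
\]
So the $LD^2$ term is paid once, rather than divided by $1-\theta^2$.

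\textbf{Finishing.} Use $\theta_k\leq(1+a_k)^{-1}$ with $a_k\coloneqq\frac n{2\gamma_k}=\frac12\sqrt[3]{n^2\delta_k/(LD^2)}$, which is nondecreasing. The $\Delta_0$ term vanishes since $a_k\geq a_0>0$. The elementary bound $\limsup_k\sum_{i<k}\prod_{j=i+1}^{k-1}(1+a_j)^{-2}\leq1+\frac1{2a_\infty}$ then gives exactly the constants $\frac12$ and $2$ and the additive $2\delta$. The choice $\gamma_k=\sqrt[3]{nLD^2/\delta_k}$ needs no adjustment.

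\textbf{Relation to your geometric-weight idea.} It is the same mechanism. Abel summation with weights $r^{K-1-j}$, increasing in $j$, bounds $\sum_jr^{K-1-j}\paren*{\norm{z_j-u}^2-\norm{z_{j+1}-u}^2}$ by $D^2$. But you did not carry this out. With varying $\gamma_k$, what keeps the weights monotone is the inequality $\gamma_{k+1}\leq\Gamma_k$, not freezing $\gamma_k\approx\gamma_\infty$.
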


\cref{thm:tvfista_final_error_short} states that the asymptotic function value gap of TV-APGe is bounded by $O((\delta/n)^{2/3}+\delta)$ if $\delta_\infty$ is not too small compared to $\delta$.
When we have access to $\delta$, we can set $\delta_k$ to satisfy $\delta_\infty=\delta$, which results in $O((\delta/n)^{2/3}+\delta)$ asymptotic error.

TV-APGe achieves an asymptotic error bound of $O((\delta/n)^{2/3}+\delta)$ provided that the limit of the estimates $\delta_k$ is not too small relative to $\delta$.
It also uses function evaluations to compute these estimates.
\Cref{alg:tvfista_new}, in contrast, needs neither this additional assumption nor function evaluations. 
Nevertheless, TV-APGe can outperform \cref{alg:tvfista_new} in practice, as shown in \cref{sec:numerical}.

%% file: 5_numerical.tex
\section{NUMERICAL EXPERIMENTS}\label{sec:numerical}
We evaluate the methods in \cref{sec:tvapg_unified} on four TVCO problems: a synthetic quartic minimization problem and three autoregression problems using benchmark data. 
We compare TV-PG, TV-APG, and TV-APGe with the online regularized proximal gradient (ORPG) algorithm. 
ORPG is a natural extension of online regularized gradient descent~\citep{madden2021bounds}.
Further details on ORPG are provided in \cref{sec:orpgd}. 
We use $n=10$ for all methods to ensure the same number of gradient queries per round.
Experimental details and additional results are provided in \cref{sec:exp_details}.

\subsection{Quartic Function}\label{sec:quartic}
We consider the following TVCO problem:
\begin{align}
    \min_{x\in\mathcal X}f_k(x)\coloneqq \frac14\norm{x-y_k}_{A_k}^4,\quad\mathcal X\coloneqq[-1,1]^d,
\end{align}
where $\norm{x}_A\coloneqq\sqrt{x^\top Ax}$.
This is convex but not strongly convex.
See \cref{sec:quartic_details} for the details of the parameter settings.

TV-APG performed best, followed by TV-APGe, TV-PG, and ORPG (\cref{fig:quartic}).
As $h$ decreases, the terminal errors of TV-APG and TV-APGe decrease much faster than the theoretical bound suggests.
For example, when $h$ decreases from $0.1$ to $0.001$, the terminal error of TV-APG decreases by a factor of approximately $2\times10^{-5}$.
In comparison, the $h^{2/3}$ dependence of the asymptotic bound corresponds to a factor of $10^{-4/3}$.
These results suggest that the worst-case bound may be conservative for this problem.

\begin{figure}[tb]
    \centering
    \includegraphics[width=0.6\columnwidth]{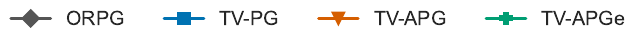}\\
    \includegraphics[width=0.5\columnwidth]{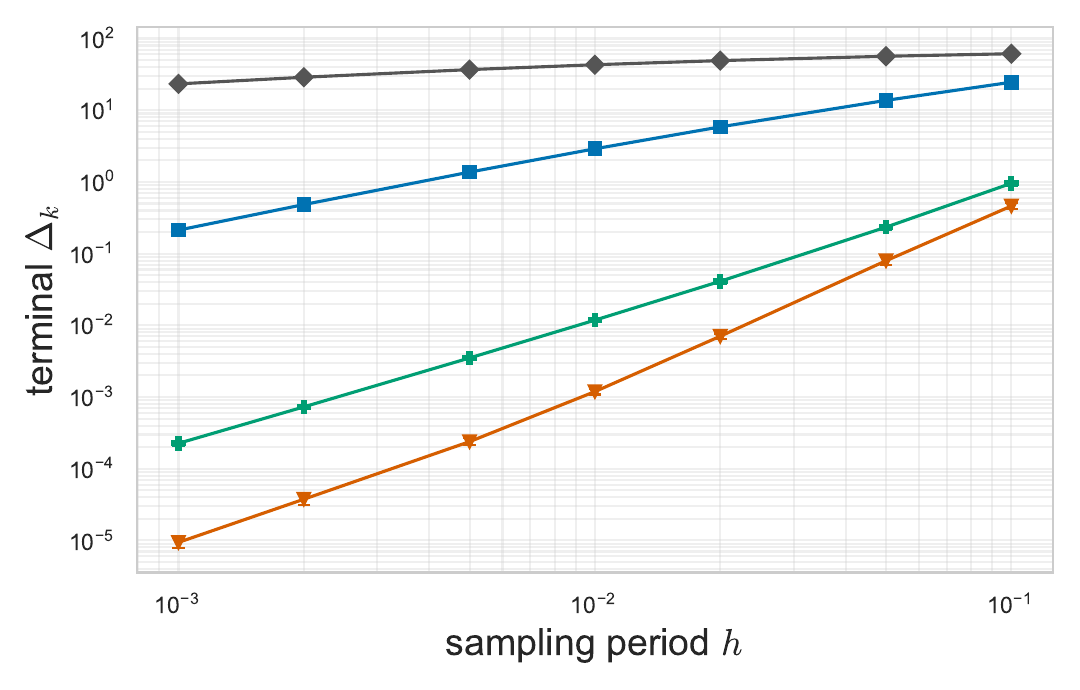}
    \caption{Quartic Function. The optimality gaps $\Delta_k$ at the final round are plotted for each sampling period $h$, where $\delta=O(h)$. The confidence intervals are shown as error bars, but they are too small to be visible.}
    \label{fig:quartic}
\end{figure}

\subsection{Time-Varying Autoregression}\label{sec:autoregression}
We use three time-series datasets from the Monash Time Series Forecasting Archive~\citep{godahewa2021monash}: Electricity Hourly (Electricity), Traffic Hourly (Traffic), and Solar 10 Minutes (Solar).
Solar contains simulated solar power data~\citep{godahewa2020solar10min}.

For each source dataset, let $\mathcal D$ denote the collection of preprocessed time series, and denote each series by $z=(z_k)_{k=0}^{17543}\subset\R$.
We consider the following time-varying autoregression problem:
\begin{align}
    \min_{x\in\mathcal X} f_k(x)
    &\coloneqq\frac1{|\mathcal D_k|}\sum_{(a,b)\in\mathcal D_k}\ell(\inner{a,x}-b),\label{eq:autoregression}\\
    \mathcal X&\coloneqq\Set{x\in\R^{1+168}}{\norm{x}\leq R},
\end{align}
for $k=887,888,\ldots,17543$, where 
\begin{align}
    \mathcal D_k
    &\coloneqq\{(a,b)\mid a=(1,z_{k'-1},\ldots,z_{k'-168}),~b=z_{k'},\\&\qquad\qquad\quad z\in\mathcal D,~k'\in[k-167,k]\}
\end{align}
and we use a pseudo-Huber loss function $\ell(r)\coloneqq\sqrt{1+r^2}-1$.
The strong convexity parameter depends on the data and may become arbitrarily small as the data change, so a common positive parameter over an indefinitely continuing data stream is not guaranteed. Consequently, tracking results that require such a parameter need not apply even when every objective is strongly convex.
See \cref{sec:autoregression_details} for details of the preprocessing of the datasets and the choice of parameters.

The results are shown in \cref{fig:electricity,fig:traffic,fig:solar}.
TV-APGe tended to perform best, followed by TV-APG, TV-PG, and ORPG.
Its advantage over TV-APG suggests a possible trade-off between worst-case performance and empirical adaptivity.
TV-APGe requires function evaluations, whereas the other methods do not.
TV-PG, TV-APG, and TV-APGe outperformed ORPG on all three datasets, possibly because our methods avoid the solution bias induced by adding a regularization term to the objective function.

\begin{figure*}[tb]
    \centering
    \includegraphics[width=0.6\columnwidth]{figures/legend_four_methods.pdf}\\
    \begin{minipage}[b]{0.33\textwidth}
        \centering
        \includegraphics[width=\columnwidth]{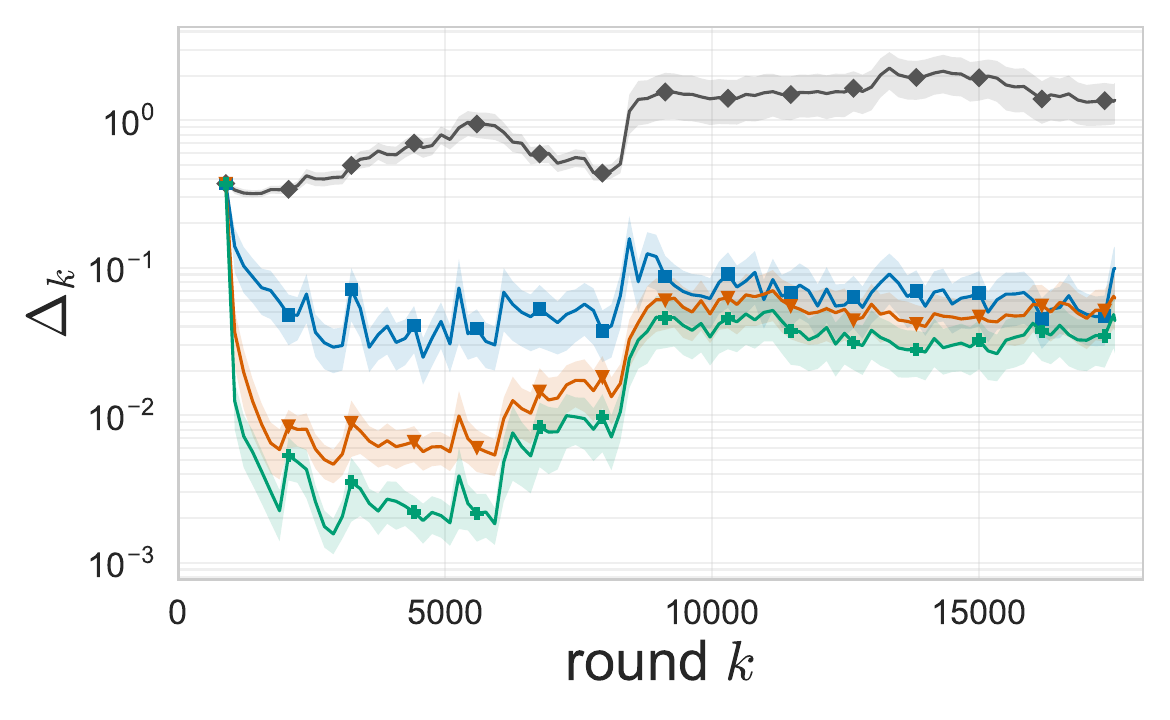}
        \subcaption{Electricity.}
        \label{fig:electricity}
    \end{minipage}%
    \begin{minipage}[b]{0.33\textwidth}
        \centering
        \includegraphics[width=\columnwidth]{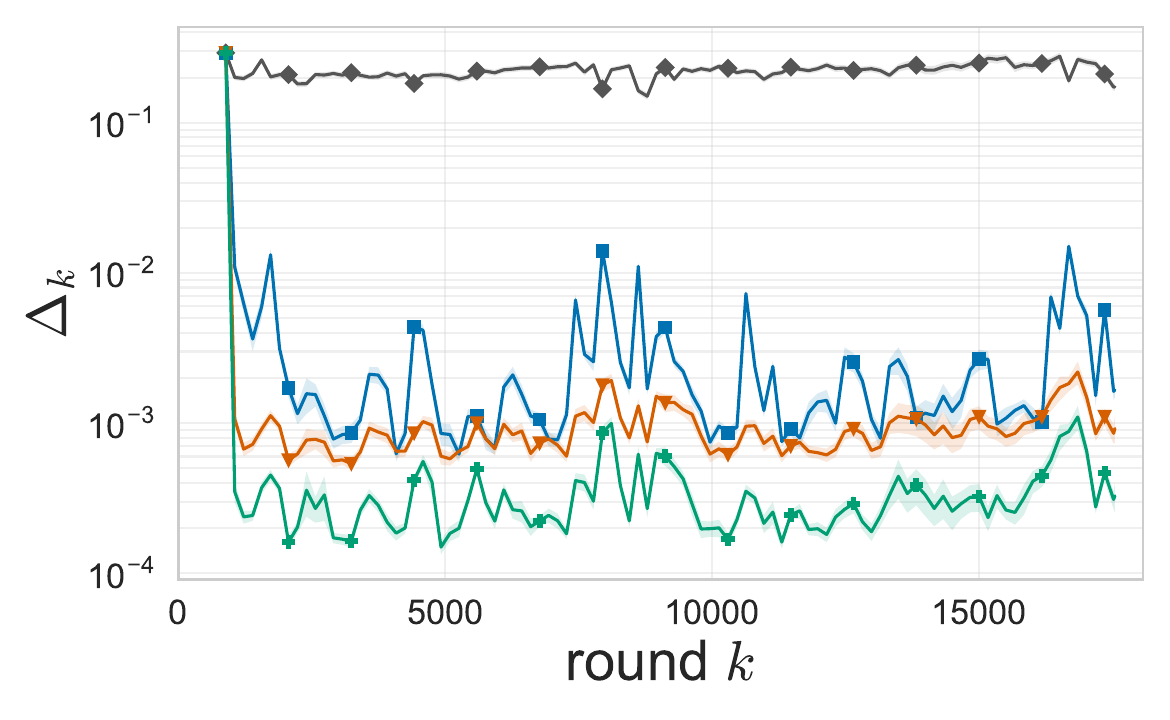}
        \subcaption{Traffic.}
        \label{fig:traffic}
    \end{minipage}%
    \begin{minipage}[b]{0.33\textwidth}
        \centering
        \includegraphics[width=\columnwidth]{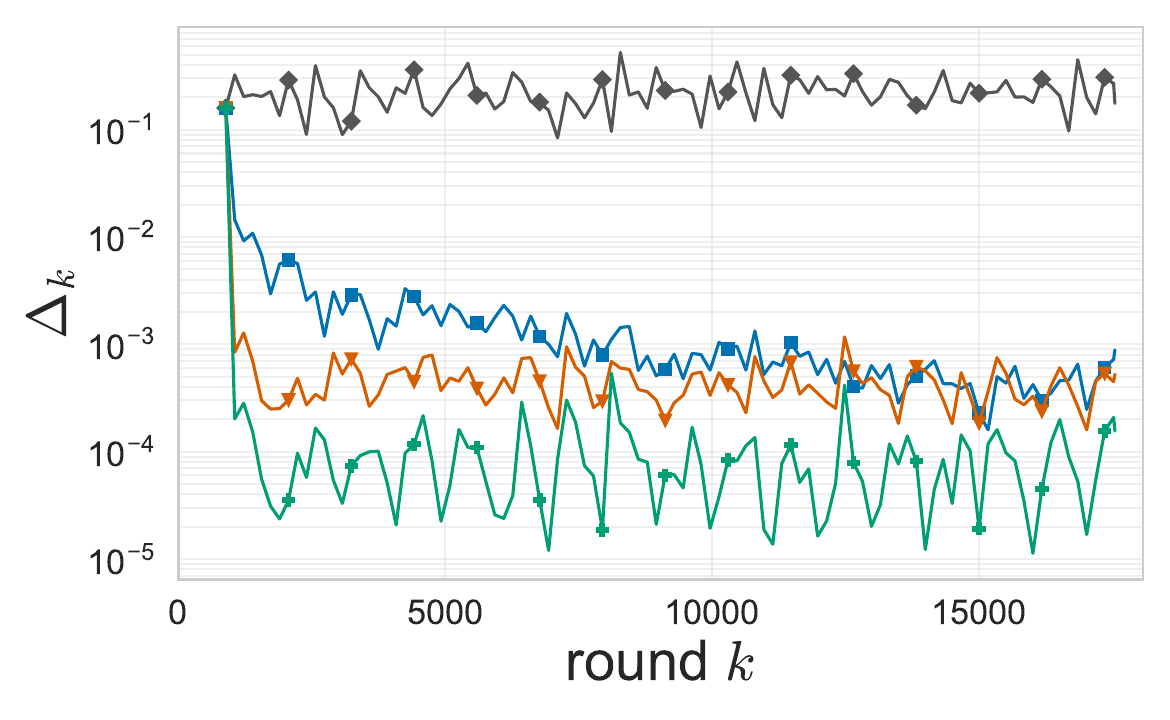}
        \subcaption{Solar.}
        \label{fig:solar}
    \end{minipage}
    \caption{Autoregression. The optimality gaps $\Delta_k$ are plotted every $168$ rounds and at the final round. The confidence intervals are shown as shaded bands.}
    \label{fig:autoregression}
\end{figure*}

%% file: 6_conclusion.tex
\section{CONCLUSION}\label{sec:conclusion}
We studied TVCO without strong convexity and proposed TV-APG.
The proposed method achieves an asymptotic error of $O((\delta/n)^{2/3}+\delta)$, which is smaller than that of TV-PG.
Numerical experiments demonstrated that TV-PG, TV-APG, and TV-APGe maintain small optimality gaps.

Future work includes analyzing the tracking performance of non-strongly convex TVCO methods under more sophisticated assumptions.
In particular, refining the temporal variation assumption is important for explaining why TV-APG performs better empirically than its bound suggests (\cref{sec:quartic}).
Another important direction is to develop an algorithm whose theoretical guarantees are comparable to those of TV-APG and whose empirical performance is comparable to that of TV-APGe.
It would also be useful to extend $\apg_2$ to unbounded time-invariant problems and compare its performance with that of existing acceleration methods.

%% file: 90_appendix_application.tex
\section{DETAILS FOR SECTION~\ref{sec:introduction}}\label{sec:application}
\subsection{Robust Regression with a Moving Data Window}\label{sec:application_regression}

In forecasting and system identification, a model can be fitted to a moving window of observations to adapt to changes in the data.
When observations arrive frequently, only a few optimization steps may be feasible before the next update~\citep{dall2020optimization}.
Robust losses reduce the influence of large residuals, while parameter constraints encode prior bounds on the model.

Let $(a_i,b_i)\in\R^d\times\R$ denote the observations, and let $N\geq1$ be the window size.
After collecting an initial window, consider
\begin{align}
    \min_{x\in\mathcal X}f_k(x)&\coloneqq\frac1N\sum_{i=k-N+1}^{k}\ell_\kappa(\inner{a_i,x}-b_i),\qquad
    \mathcal X\coloneqq\{x\in\R^d:\|x\|\leq R\},
\end{align}
where $\ell_\kappa$ is the pseudo-Huber loss defined as $\ell_\kappa(r)\coloneqq\kappa^2(\sqrt{1+(r/\kappa)^2}-1)$ and $\kappa,R>0$ are fixed.

For the pseudo-Huber loss,
\begin{align}
    \nabla^2 f_k(x)
    =\frac1N\sum_{i=k-N+1}^{k}
    \left(1+\frac{(\inner{a_i,x}-b_i)^2}{\kappa^2}\right)^{-3/2}a_ia_i^\top,
\end{align}
which implies that $f_k$ is not strongly convex in general.
Even if each $f_k$ is $\mu_k$-strongly convex, $\inf_{k\in\N}\mu_k$ may be zero.

Suppose that $\|a_i\|\leq A$ and $|b_i|\leq B$ uniformly, for constants $A,B>0$.
Since $0<\ell_\kappa''(r)\leq1$, we may take $L=A^2$ and $D=2R$.
Since $0\leq\ell_\kappa(\inner{a_i,x}-b_i)\leq \ell_\kappa(AR+B)$ on $\mathcal X$, we have
\begin{gather}
    |f_{k+1}(x)-f_k(x)|
    =\frac1N\abs{\ell_\kappa(\inner{a_{k+1},x}-b_{k+1})-\ell_\kappa(\inner{a_{k-N+1},x}-b_{k-N+1})}
    \leq\frac{\ell_\kappa(AR+B)}N,\\
    \bigl|\Delta_{f_{k+1}}(x,y)-\Delta_{f_k}(x,y)\bigr|
    \leq\frac{2\ell_\kappa(AR+B)}N
\end{gather}
for all $x,y\in\mathcal X$.
Hence, \cref{asm:problem} holds.
For multiple time series, the same argument applies by considering the average loss over the observations in each time slice.

\subsection{OCO and Dynamic Regret}\label{sec:related_work_oco}
\citet{zinkevich2003online} introduced the OCO framework and established a static regret bound for online gradient descent.
The same work also gave a dynamic regret bound for this method in terms of the path length defined by
\begin{align}
    P_K(u_1,\ldots,u_K)\coloneqq\sum_{k=1}^{K-1}\|u_k-u_{k+1}\|.
\end{align}
Subsequent work has studied OCO algorithms and dynamic regret bounds using measures of cumulative variation in the objectives or comparator sequence, such as path length~\citep{yang2016tracking,zhang2018adaptive,cutkosky2020parameter}, function variation~\citep{besbes2015non,zhang2018dynamic,wan2023improved}
\begin{align}
    V_K\coloneqq\sum_{k=1}^{K-1}\sup_{x\in\mathcal X}|f_k(x)-f_{k+1}(x)|,
\end{align}
and deviation from the dynamical model~\citep{hall2013dynamical}.
Other studies use multiple measures~\citep{jadbabaie2015online,zhao2020dynamic,zhao2024adaptivity}.

Dynamic regret has also been analyzed under the strong convexity assumption~\citep{mokhtari2016online,zhang2017improved,zhao2021improved}.
In this paper, we assume gradual temporal variation and derive guarantees on $F_k(x_k)-F_k^*$ at each round, while dynamic regret is the sum of $F_k(x_k)-F_k^*$ over $k$.
We focus on the regime in which measures of cumulative variation, such as $P_K$ and $V_K$, grow linearly with $K$.
Regret bounds in this regime are not sublinear.

In particular, \citet{wan2023improved} proved a dynamic regret bound of $O(\sqrt{K(1+V_K)})$ for online gradient descent.
If we assume $|f_{k+1}(x)-f_k(x)|\leq\delta$, then $V_K\leq(K-1)\delta$ and the bound becomes $O(\sqrt{K(1+K\delta)})$.
This yields
\begin{align}
    \frac1K\sum_{k=1}^K\Delta_k
    \leq O(K^{-1/2}+\delta^{1/2}),
\end{align}
while \cref{thm:tvpgd_new_error} states that $\Delta_k\leq O(k^{-1}+\delta^{1/2})$ for each $k$.

It is also worth noting that some existing methods require $O(\log K)$ updates or exact minimization at each round~\citep{jadbabaie2015online,zhang2018dynamic,cutkosky2020parameter,zhao2024adaptivity}, which is not suitable for our setting where computation time is limited at each round.

\subsection{ORGD and ORPG}\label{sec:orpgd}
\citet{madden2021bounds} proposed online regularized gradient descent (ORGD) for smooth TVCO problems without strong convexity.
This is the case $g=0$ in \cref{eq:problem}.
For $\mu>0$ and $\bar x\in\R^d$, define the regularized objective $f_k(x;\mu)\coloneqq f_k(x)+\tfrac\mu2\|x-\bar x\|^2$.
At each round, ORGD applies a gradient descent step to this objective with step size $2/(L+2\mu)$ instead of $L^{-1}$.

Let $x_k^*(\mu)\coloneqq\argmin_{x\in\R^d}f_k(x;\mu)$ and $\sigma(\mu)\coloneqq\sup_{k\geq1}\norm{x_k^*(\mu)-x_{k-1}^*(\mu)}$ for $\mu>0$.
Assume that $\mathcal X_{f_k}^*\neq\varnothing$ for all $k$.
Define
\begin{align}
    \sigma(0)\coloneqq
    \sup_{k\geq1}\|x_k^*(0)-x_{k-1}^*(0)\|,\quad\text{where}\quad
    x_k^*(0)\coloneqq\lim_{\mu\downarrow0}x_k^*(\mu).
\end{align}
ORGD has the following theoretical guarantee.
\begin{theorem}[Adapted from {\citet[Theorem 5.1]{madden2021bounds}}]\label{thm:orgd_asymptotic_error}
    Let $f_k$ be convex and $L$-smooth on $\R^d$ and $\mathcal X_{f_k}^*\neq\varnothing$ for all $k$.
    Suppose $\sigma(0)>0$ and the following bounded drift condition holds:
    \begin{align}
        \lim_{\mu\downarrow0}R(\mu)<\infty,\quad\text{where}\quad R(\mu)\coloneqq\sup_{k\geq0}\norm{x_k^*(\mu)-\bar x}.
    \end{align}
    Then, there exists $\mu\in(0,L]$ such that, given $x_0\in\R^d$, ORGD constructs a sequence $(x_k)_{k\geq0}$ with 
    \begin{align}
        \limsup_{k\to\infty}\norm{\nabla f_k(x_k)}
        &\leq2\sqrt2L\sqrt{\sigma(\mu)R(\mu)}.
    \end{align}
\end{theorem}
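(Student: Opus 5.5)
The plan has two parts: a standard tracking bound for gradient descent on the regularized, strongly convex objectives, followed by a choice of $\mu$ that balances the resulting two terms. Fix $\mu\in(0,L]$. Then $f_k(\cdot;\mu)$ is $\mu$-strongly convex and $(L+\mu)$-smooth. A gradient step with step size $2/((L+\mu)+\mu)=2/(L+2\mu)$ is therefore a contraction toward $x_k^*(\mu)$ with factor
\begin{align}
  q\coloneqq\frac{(L+\mu)-\mu}{(L+\mu)+\mu}=\frac{L}{L+2\mu}.
\end{align}
Write $e_k\coloneqq\norm{x_k-x_{k-1}^*(\mu)}$, the distance of the current iterate to the minimizer of the regularized objective used to produce it. Combining the contraction with the definition of $\sigma(\mu)$ through the triangle inequality gives
\begin{align}
  \norm{x_{k+1}-x_k^*(\mu)}\le q\norm{x_k-x_k^*(\mu)}\le q\,(e_k+\sigma(\mu)).
\end{align}
Unrolling this recursion yields $\limsup_k e_k\le q\sigma(\mu)/(1-q)$. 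Hence $\limsup_k\norm{x_k-x_k^*(\mu)}\le \sigma(\mu)/(1-q)=\sigma(\mu)(L+2\mu)/(2\mu)$.

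Next I convert this distance bound into a gradient bound. The optimality condition for $x_k^*(\mu)$ reads $\nabla f_k(x_k^*(\mu))=-\mu(x_k^*(\mu)-\bar x)$, whose norm is at most $\mu R(\mu)$. Using $L$-smoothness,
\begin{align}
  \norm{\nabla f_k(x_k)}\le L\norm{x_k-x_k^*(\mu)}+\mu R(\mu).
\end{align}
Since $\mu\le L$ gives $L+2\mu\le 3L$, this produces
\begin{align}
  \limsup_{k\to\infty}\norm{\nabla f_k(x_k)}\le \frac{3L^2\sigma(\mu)}{2\mu}+\mu R(\mu).
\end{align}
Substitute $\mu=tL\sqrt{\sigma(\mu)/R(\mu)}$. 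The right-hand side becomes $L\sqrt{\sigma(\mu)R(\mu)}\,(t+\tfrac{3}{2t})$. This is at most $2\sqrt2L\sqrt{\sigma(\mu)R(\mu)}$ exactly when $t^2-2\sqrt2t+\tfrac32\le0$, that is, when $t\in[\tfrac1{\sqrt2},\tfrac3{\sqrt2}]$.

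It therefore remains to exhibit $\mu\in(0,L]$ with $h(\mu)\coloneqq \mu\sqrt{R(\mu)}/(L\sqrt{\sigma(\mu)})\in[\tfrac1{\sqrt2},\tfrac3{\sqrt2}]$. This fixed-point-type step is the main obstacle, because $\sigma(\mu)$ and $R(\mu)$ themselves depend on $\mu$. The endpoints are handled as follows.
\begin{itemize}
  \item At $\mu=L$: the triangle inequality gives $\sigma(L)\le 2R(L)$, so $h(L)\ge 1/\sqrt2$.
  \item As $\mu\downarrow0$: $x_k^*(\mu)\to x_k^*(0)$ for each $k$. A supremum of pointwise limits is lower semicontinuous, so $\liminf_{\mu\downarrow0}\sigma(\mu)\ge\sigma(0)>0$. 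Combined with the bounded drift condition $\lim_{\mu\downarrow 0}R(\mu)<\infty$, this gives $h(\mu)\to0$.
\end{itemize}
If $h(L)\le 3/\sqrt2$, then $\mu=L$ already works. Otherwise I must show that $h$ cannot jump over the interval, whose endpoints differ by a factor of $3$.

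For this I would use two facts. First, the Tikhonov path $\mu\mapsto\norm{x_k^*(\mu)-\bar x}$ is nonincreasing, so $R$ is monotone. Second, each $x_k^*(\mu)$ is continuous in $\mu$ with a modulus controlled uniformly in $k$, via $\norm{x_k^*(\mu)-x_k^*(\mu')}\le \frac{|\mu-\mu'|}{\mu'}R(\mu)$; this follows from strong monotonicity of the regularized gradients. It yields a ratio bound on $\sigma(\mu')$ versus $\sigma(\mu)$, and on $R(\mu')$ versus $R(\mu)$, for $\mu'$ close to $\mu$. Consequently $h$ is continuous on $(0,L]$ (or at least changes by less than a factor $3$ over small steps), and the intermediate value argument gives the required $\mu$.

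If that uniform continuity estimate turns out too weak, my fallback is to reread Madden et al.'s argument, which as I recall chooses $\mu$ by the same balancing. I would then adopt their existence argument, with the adapted constant $2\sqrt2$ absorbing any slack.
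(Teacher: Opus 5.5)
Your proposal is correct and follows the route the paper relies on via Madden et al.: a contraction-based tracking bound for gradient descent on the regularized objective, followed by balancing with $\mu=L\sqrt{\sigma(\mu)/(2R(\mu))}$. That choice is exactly $h(\mu)=1/\sqrt2$, the left endpoint of your admissible interval, and your uniform modulus $\|x_k^*(\mu)-x_k^*(\mu')\|\le\frac{|\mu-\mu'|}{\mu'}R(\mu)$ together with monotonicity of $R$ actually supplies the existence step that the paper only asserts from $\sigma(0)>0$. The one wrinkle is that $h$ is infinite wherever $\sigma(\mu)=0$, so run the intermediate value argument on $1/h(\mu)=L\sqrt{\sigma(\mu)}/(\mu\sqrt{R(\mu)})$ instead; it is continuous on $(0,L]$ because $R(\mu)\ge R(L)>0$ (if $R(L)=0$ then $\nabla f_k(\bar x)=0$ for all $k$, which forces $\sigma(0)=0$).
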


We add the condition $\sigma(0)>0$ to the theorem of \citet{madden2021bounds}.
Without this condition, the statement can fail even for a static problem.
To see this, let $f_k(x)=L(x-a)^2/2$ for all $k$, with $a\ne0$ and $\bar x=0$.
Then $x_k^*(\mu)=La/(L+\mu)\eqqcolon x^*(\mu)$, so $R(\mu)=L|a|/(L+\mu)$ is bounded and $\sigma(\mu)=0$.
For every $\mu>0$, ORGD converges to $x^*(\mu)$.
At this point, the gradient norm is $|\nabla f_k(x^*(\mu))|=L\mu|a|/(L+\mu)>0$, whereas the claimed upper bound is zero.

The additional condition $\sigma(0)>0$ excludes the counterexample and ensures the existence of a positive regularization parameter satisfying $\mu=L\sqrt{\sigma(\mu)/(2R(\mu))}>0$, which is essential for the proof of \cref{thm:orgd_asymptotic_error} in \citet{madden2021bounds}.

The bounded drift condition requires the unconstrained regularized minimizers to remain uniformly bounded over time and for all sufficiently small $\mu>0$, whereas \cref{asm:boundedness} requires the entire feasible domain $\mathcal X$ to be compact.
A further difference concerns the upper bound.
The bound on the asymptotic gradient norm in \cref{thm:orgd_asymptotic_error} is meaningful only when $\sigma(\mu)$ is small.
For small $\mu$, however, a small value of $\delta$ in \cref{asm:t_lipschitz} does not necessarily imply a small value of $\sigma(\mu)$.
This is because small changes in the objective function need not lead to small changes in its minimizer.

In our experiments, we generalize ORGD to the composite setting by applying the proximal gradient update to the regularized objective function $f_k(x)+g(x)+\tfrac\mu2\|x-\bar x\|^2$ with step size $2/(L+2\mu)$ at each round.
We call the resulting method online regularized proximal gradient (ORPG) and present it in \cref{alg:orpgd}.

The analysis of ORGD by \citet{madden2021bounds} motivates choosing $\mu$ to satisfy $\mu=L\sqrt{\sigma(\mu)/(2R(\mu))}$.
This relation is obtained by minimizing an upper bound while treating $\sigma(\mu)$ and $R(\mu)$ as fixed.
Finding a value of $\mu$ that satisfies this equation is difficult, so we use $\mu=L\sqrt{v/(2D)}$ as an approximation in our experiments.
Here, $v$ is an estimated upper bound on the distance between optimal solutions in consecutive rounds.
As in \citet{madden2021bounds}, we choose $\bar x$ to be the center of the feasible set, which is $0$ in all our experiments.

\begin{algorithm}[t]
  \caption{ORPG}
  \label{alg:orpgd}
  \begin{algorithmic}[1]
    \Require $x_0,\bar x\in \mathcal X$, $n\geq1$, $\mu,L>0$
    \For{$k=0,1,\ldots$}
        \State $x\gets x_k$
        \For{$i=1,\ldots,n$}
            \State $x\gets\prox_{(2/(L+2\mu))g}(x-\frac2{L+2\mu}(\nabla f_k(x)+\mu(x-\bar x)))$
        \EndFor
        \State $x_{k+1}\gets x$
    \EndFor
  \end{algorithmic}
\end{algorithm}

%% file: 90_appendix_problem.tex
\section{EXAMPLE ON THE PERFORMANCE MEASURE}\label{sec:example_performance_measure}
\begin{example}\label{ex:tracking_error_vs_function_value_gap}
    To illustrate the difference between the tracking error and the function value gap, consider the following example:
    \begin{align}
        \min_{x\in\mathcal X} f_k(x) \coloneqq \paren*{kh-\tau}x,\quad\mathcal X\coloneqq\bracket*{-\frac D2,\frac D2},
    \end{align}
    where $\tau,h>0$ are constants.
    This problem satisfies \cref{asm:problem} with $\delta=Dh/2$.
    Assume that $k_0<\tau/h<k_0+1$ for some $k_0$.
    The optimal solution is given by $x_k^*=D/2$ for $k\leq k_0$ and $x_k^*=-D/2$ for $k\geq k_0+1$.
    In this setting, even the exact minimization scheme $x_{k+1}\in\argmin_{x\in\mathcal X}f_k(x)$ can yield $\norm{x_k-x_k^*}=D$, no matter how small $h>0$ is.
    This scheme is idealized because computational time limitations make exact minimization impractical.
    Indeed, the exact minimization scheme outputs $x_k=D/2$ for $1\leq k\leq k_0+1$ and $x_k=-D/2$ for $k\geq k_0+2$.
    Thus $\|x_{k_0+1}-x_{k_0+1}^*\|=D$, which is independent of $h$.
    For the function value gap, we have $\Delta_{k_0+1}\leq Dh=2\delta$, which goes to zero as $h\to0$.
    As this example illustrates, $\norm{x_k-x_k^*}$ may not be a meaningful performance measure when the optimal solution jumps abruptly, while $\Delta_k$ is meaningful even in such a case.
\end{example}

%% file: 91_appendix_tvpgd.tex
\section{DETAILS FOR SECTION~\ref{sec:tvpg}}\label{sec:tvpgd_new_details}
\subsection{Proof of \cref{prop:pgd_potential}}\label{sec:pgd_potential_proof}
Before proving \cref{prop:pgd_potential}, we introduce a property of the PG operator.
\begin{proposition}[e.g., {\citet[Theorem 10.16]{beck2017first}}]\label{prop:pgd_property}
    Let $g\in\mathcal G$, $f\in\mathcal F_L$, and $F\coloneqq f+g$.
    Then, for all $x,y\in\mathcal X$, we have
    \begin{align}
        \Delta_F(T_{f,g}(x),y)
        &\leq \frac L2\norm{x-y}^2-\frac L2\norm{T_{f,g}(x)-y}^2,\label{eq:pgd_fejer}\\
        \Delta_F(T_{f,g}(x),x)
        &\leq0.\label{eq:pgd_descent}
    \end{align}
\end{proposition}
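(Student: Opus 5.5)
The plan is the standard one-step analysis of the proximal gradient step. It combines three inequalities: the descent lemma for $f$, convexity of $f$, and the subgradient inequality coming from the optimality condition of the proximal operator. Write $x^+\coloneqq T_{f,g}(x)$ and fix $y\in\mathcal X$. Since $x^+\in\dom g=\mathcal X$ and $\mathcal X$ is convex, the segment $[x,x^+]$ lies in $\mathcal X$. Therefore $L$-smoothness of $f$ on $\mathcal X$ yields the descent lemma
\begin{align}
    f(x^+)\leq f(x)+\inner{\nabla f(x),x^+-x}+\frac L2\norm{x^+-x}^2 .
\end{align}
Convexity of $f$ gives $f(x)\leq f(y)+\inner{\nabla f(x),x-y}$.

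Next we use the prox optimality condition. By definition, $x^+$ minimizes $L^{-1}g(w)+\frac12\norm{w-(x-L^{-1}\nabla f(x))}^2$, so
\begin{align}
    L(x-x^+)-\nabla f(x)\in\partial g(x^+).
\end{align}
Since $g$ is convex, this gives $g(x^+)\leq g(y)+\inner{L(x-x^+)-\nabla f(x),x^+-y}$.

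Adding the three inequalities, all $f(x)$ terms and all $\nabla f(x)$ inner products cancel, leaving
\begin{align}
    F(x^+)-F(y)\leq L\inner{x-x^+,x^+-y}+\frac L2\norm{x^+-x}^2 .
\end{align}
We then apply the three-point identity
\begin{align}
    2\inner{x-x^+,x^+-y}=\norm{x-y}^2-\norm{x-x^+}^2-\norm{x^+-y}^2 .
\end{align}
The $\norm{x-x^+}^2$ terms cancel, and we obtain exactly \cref{eq:pgd_fejer}. Taking $y=x$ in \cref{eq:pgd_fejer} gives $\Delta_F(T_{f,g}(x),x)\leq -\frac L2\norm{x^+-x}^2\leq0$, which is \cref{eq:pgd_descent}.

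No step is a real obstacle. The only point needing care is that smoothness is assumed only on $\mathcal X$. This is handled by noting that $x,x^+,y\in\mathcal X$ and that $\mathcal X$ is convex, so the descent lemma, obtained by integrating the gradient along the segment, applies. Also, $g(y)<\infty$ because $y\in\mathcal X$, so all quantities in the sum are finite.
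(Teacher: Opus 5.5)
Your proof is correct and is essentially the paper's argument. The paper uses the same descent-lemma and convexity inequalities, but it packages your subgradient inequality and three-point identity into one step: $T_{f,g}(x)$ minimizes the $L$-strongly convex model $\tilde F_x(z)=g(z)+f(x)+\langle\nabla f(x),z-x\rangle+\frac L2\|z-x\|^2$, so $\tilde F_x(T_{f,g}(x))+\frac L2\|T_{f,g}(x)-y\|^2\leq\tilde F_x(y)$. Your remark that the descent lemma needs $[x,T_{f,g}(x)]\subset\mathcal X$ is a point the paper leaves implicit.
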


\begin{proof}
    By the definition of the proximal operator, we have
    \begin{align}
        T_{f,g}(x)
        &=\argmin_{z\in\R^d}\set*{g(z)+\frac L2\norm{z-(x-L^{-1}\nabla f(x))}^2}\\
        &=\argmin_{z\in\R^d}\set*{g(z)+f(x)+\inner{\nabla f(x),z-x}+\frac L2\norm{z-x}^2}
        =\argmin_{z\in\R^d}\tilde F_x(z),
    \end{align}
    where $\tilde F_x(z)\coloneqq g(z)+f(x)+\inner{\nabla f(x),z-x}+(L/2)\norm{z-x}^2$.
    Since $\tilde F_x$ is $L$-strongly convex, we have
    \begin{align}
        \tilde F_x(T_{f,g}(x))+\frac L2\norm{T_{f,g}(x)-y}^2
        &\leq \tilde F_x(y).
    \end{align}
    Because $f\in\mathcal F_L$, its smoothness gives $F(T_{f,g}(x))\leq \tilde F_x(T_{f,g}(x))$.
    Its convexity also gives
    \begin{align}
        \tilde F_x(y)
        &=g(y)+f(x)+\inner{\nabla f(x),y-x}+\frac L2\norm{y-x}^2\\
        &\leq F(y)+\frac L2\norm{y-x}^2.
    \end{align}
    Combining these inequalities yields \cref{eq:pgd_fejer}.
    Taking $y=x$ in \cref{eq:pgd_fejer} gives \cref{eq:pgd_descent}.
\end{proof}

Using this property, we can prove \cref{prop:pgd_potential} as follows.

\begin{proof}[Proof of \cref{prop:pgd_potential}]
    From \cref{prop:pgd_property} with $y=u$, we have
    \begin{align}
        \Psi_{F,p+1}(T_{f,g}(x),u)
        &= (p+1)\Delta_F(T_{f,g}(x),u)+\frac L2\|T_{f,g}(x)-u\|^2\\
        &=p\Delta_F(x,u)+p\Delta_F(T_{f,g}(x),x)+\paren*{\Delta_F(T_{f,g}(x),u)+\frac L2\|T_{f,g}(x)-u\|^2}\\
        &\leq p\Delta_F(x,u)+\frac L2\|x-u\|^2\\
        &= \Psi_{F,p}(x,u),
    \end{align}
    which completes the proof.
\end{proof}

\subsection{Proof of \cref{thm:tvpgd_new_error}}\label{sec:tvpgd_new_error_proof}
\begin{proof}[Proof of \cref{thm:tvpgd_new_error}]
    Fix $u\in\mathcal X$ arbitrarily.
    \cref{prop:pgd_potential} guarantees that
    \begin{align}
        \Psi_{F_k,p+n}(x_{k+1},u)
        =\Psi_{F_k,p+n}(T_{f_k,g}^{n}(x_k),u)
        &\leq\Psi_{F_k,p}(x_k,u).
    \end{align}
    From \cref{asm:t_lipschitz}, we have
    \begin{align}
        \Psi_{F_{k+1},p+n}(x_{k+1},u)
        &\leq\Psi_{F_k,p+n}(x_{k+1},u)+2(p+n)\delta.
    \end{align}
    Combining these two inequalities gives
    \begin{align}
        \Psi_{F_{k+1},p+n}(x_{k+1},u)
        &\leq\Psi_{F_k,p}(x_{k},u)+2(p+n)\delta
    \end{align}
    for all $k\geq0$ and $p\geq0$.
    In particular, we have
    \begin{align}
        \Psi_{k-m+i+1,i+1}
        &\leq\Psi_{k-m+i,i}+2n(i+1)\delta
    \end{align}
    for all $0\leq i<m\leq k$, where $\Psi_{k,i}\coloneqq\Psi_{F_k,ni}(x_k,u)$.
    Summing this over $i=0,\ldots,m-1$ yields
    \begin{align}
        \Psi_{k,m}
        &\leq\Psi_{k-m,0}+nm(m+1)\delta
        =\frac L2\|x_{k-m}-u\|^2+nm(m+1)\delta.
    \end{align}
    Since $\Psi_{k,m}\geq nm\Delta_{F_k}(x_k,u)$, we have
    \begin{align}
        \Delta_{F_k}(x_k,u)
        \leq\frac L{2nm}\|x_{k-m}-u\|^2+(m+1)\delta
        &\leq \frac{LD^2}{2nm}+(m+1)\delta
    \end{align}
    for all $1\leq m\leq k$, where the last inequality follows from \cref{asm:boundedness}.
    By taking $m\coloneqq\min\{k,\ceil{D\sqrt{L/(2n\delta)}}\}$, we have
    \begin{align}
        \Delta_{F_k}(x_k,u)
        &\leq \frac{LD^2}{2n}\max\set*{\frac1k,\frac1D\sqrt{\frac{2n\delta}L}}+\delta\min\set*{k+1,D\sqrt{\frac L{2n\delta}}+2}\\
        &\leq \frac{LD^2}{2n}\paren*{\frac1k+\frac1D\sqrt{\frac{2n\delta}L}}+\delta\paren*{D\sqrt{\frac L{2n\delta}}+2}\\
        &\leq \frac{LD^2}{2nk}+D\sqrt{\frac{2L\delta}n}+2\delta.
    \end{align}
    Taking $u\in\mathcal X_{F_k}^*$ completes the proof.
\end{proof}

\subsection{Proof of \cref{thm:tvpgd_lower_bound}}\label{sec:tvpgd_lower_bound_proof}
\begin{proof}
    Consider a one-dimensional constrained time-varying problem.
    Without loss of generality, let $x_0=D/2$.
    Let $\mathcal X\coloneqq[-D/2,D/2]$, $m\coloneqq\floor{(D/2)\sqrt{L/(n\delta)}}$, $K\coloneqq4m+\ceil*{LD^2/(4mn\delta)}$, 
    \begin{align}
        f_k(x)
        &\coloneqq\frac L2(\min\{x,0\})^2+a_kx,\quad
        a_k\coloneqq\frac{2\delta}D\cdot\begin{dcases*}
            k,&$0\leq k<m$,\\
            2m-k,&$m\leq k<3m$,\\
            -m,&$3m\leq k<K-m$,\\
            k-K,&$K-m\leq k<K$
        \end{dcases*}
    \end{align}
    for $0\leq k<K$, and periodically extend $f_k$ for $k\geq K$.
    By the definition of $f_k$, \cref{asm:function_class} holds.
    Since $|a_{k+1}-a_k|\leq 2\delta/D$, \cref{asm:t_lipschitz} also holds.
    It is sufficient to show that $x_K=D/2$ and $\Delta_m\geq \tfrac D8\sqrt{L\delta/n}$, because $f_k$ is periodic with period $K$.

    First, we show that $\Delta_m\geq \tfrac D8\sqrt{L\delta/n}$.
    For $0\leq k<m$, we have $L^{-1}\nabla f_k(x)=2\delta k/LD$ for all $x\geq0$.
    Thus, we have
    \begin{align}
        \sum_{k=0}^{m-1}\sum_{i=0}^{n-1}\frac{2\delta k}{LD}
        &=\frac{\delta nm(m-1)}{LD}
        \leq\frac{\delta n}{LD}\frac{LD^2}{4n\delta}
        =\frac D4
        <\frac D2,
    \end{align}
    where the first inequality follows from $m\leq \tfrac D2\sqrt{L/(n\delta)}$.
    This implies that 
    \begin{align}
        x_m
        &=\frac D2-\frac{\delta nm(m-1)}{LD}
        >\frac D4.
    \end{align}
    Hence, we have
    \begin{align}
        f_m(x_m)
        &=\frac{2m\delta}{D}x_m
        >\frac{m\delta}{2}.
    \end{align}
    From $\delta<LD^2/(16n)$, we have $\tfrac D2\sqrt{L/(n\delta)}\geq2$, which implies $m\geq \tfrac D4\sqrt{L/(n\delta)}$.
    Thus, we have
    \begin{align}
        f_m(x_m)
        >\frac D8\sqrt{\frac{L\delta}n}.
    \end{align}
    Since $\min_{x\in\mathcal X}f_m(x)\leq f_m(0)=0$, we have $\Delta_m\geq \tfrac D8\sqrt{L\delta/n}$.

    Next, we show that $x_K=D/2$.
    Let $\proj_{\mathcal X}(x)\coloneqq\argmin_{y\in \mathcal X}\|y-x\|$. 
    For $3m\leq k<K-m$, we have 
    \begin{align}
        \proj_{\mathcal X}(x-L^{-1}\nabla f_k(x))
        &=\proj_{\mathcal X}\paren*{x-\min\{x,0\}+\frac{2\delta m}{LD}}\\
        &=\min\set*{\frac D2,\max\{0,x\}+\frac{2\delta m}{LD}}
        \geq0
    \end{align}
    for all $x\in\mathcal X$.
    Thus, we have 
    \begin{align}
        x_{K-m}
        &=\min\set*{\frac D2,\max\{0,x_{3m}\}+\frac{2\delta mn}{LD}(K-4m)}
        \geq\min\set*{\frac D2,\frac{2\delta mn}{LD}\frac{LD^2}{4mn\delta}}
        =\frac D2,
    \end{align}
    where we used $K-4m=\ceil*{LD^2/(4mn\delta)}\geq LD^2/(4mn\delta)$ in the inequality.
    Since $a_k\leq0$ for $K-m\leq k<K$, we obtain $x_K=D/2$.
\end{proof}

\subsection{Example of Divergence When the Domain Is Unbounded}\label{sec:necessity_example}

\begin{example}
    Let $n=1$ and $g=0$ for simplicity.
    Then, the update rule of TV-PG is reduced to $x_{k+1}=x_k-L^{-1}\nabla f_k(x_k)$.
    Consider the following example:
    \begin{align}
        \min_{x\in\R}f_k(x)\coloneqq\frac12\sqrt{\delta^2+(\sqrt{2L\delta}x-2\delta k)^2}.
    \end{align}
    A simple calculation yields
    \begin{align}
        \nabla f_k(x)
        &= \frac{\sqrt{2L\delta}(\sqrt{2L\delta}x-2\delta k)}{2\sqrt{\delta^2+(\sqrt{2L\delta}x-2\delta k)^2}}
    \end{align}
    and
    \begin{align}
        \nabla^2f_k(x)
        =\frac {L\delta^3}{(\delta^2+(\sqrt{2L\delta}x-2\delta k)^2)^{3/2}}
        \in (0,L],
    \end{align}
    which shows that \cref{asm:function_class} holds.
    By the mean value theorem, there exists $y\in(\sqrt{2L\delta}x-2\delta(k+1),\sqrt{2L\delta}x-2\delta k)$ such that
    \begin{align}
        |f_{k+1}(x)-f_k(x)|
        &=\abs{\frac y{2\sqrt{\delta^2+y^2}}\cdot 2\delta}
        \leq \delta,
    \end{align}
    which implies that \cref{asm:t_lipschitz} holds.
    Next, we bound the distance between consecutive iterates.
    \begin{align}
        |x_{k+1}-x_k|
        &=|L^{-1}\nabla f_k(x_k)| 
        \leq \frac{\sqrt{2L\delta}|\sqrt{2L\delta}x_k-2\delta k|}{2L\sqrt{\delta^2+(\sqrt{2L\delta}x_k-2\delta k)^2}} 
        \leq \sqrt{\frac{\delta}{2L}},
    \end{align}
    and thus $|x_k-x_0|\leq k\sqrt{\delta/(2L)}$.
    Let $x_0=0$ for simplicity.
    Then, we have
    \begin{align}
        \Delta_k
        &=\frac12\sqrt{\delta^2+(\sqrt{2L\delta}x_k-2\delta k)^2}-\frac\delta2\\
        &\geq\frac{\abs*{2\delta k-\sqrt{2L\delta}x_k}}2-\frac\delta2\\
        &\geq \delta k-\frac{\sqrt{2L\delta}}2\cdot\abs{x_k}-\frac\delta2\\
        &\geq \frac\delta2 k-\frac\delta2
        \to\infty\quad\text{as }k\to\infty,
    \end{align}
    where we used the triangle inequality in the second inequality and $|x_k|\leq k\sqrt{\delta/(2L)}$ in the third inequality.
\end{example}

Note that this phenomenon does not occur if we assume \cref{asm:problem} because \cref{thm:tvpgd_new_error} guarantees that $\Delta_k$ is bounded.

%% file: 92_appendix_apg.tex
\section{DETAILS FOR SECTION~\ref{sec:apg}}\label{sec:apg_proofs}
\subsection{Equivalence of Update Rules for APG${}_1$}\label{sec:fista_d_update_equivalence_proof}
We formally state the equivalence between the update rules \cref{eq:apg1_equivalence_gamma,eq:apg1_equivalence_y,eq:apg1_equivalence_x} and \cref{alg:fista} as follows.

\begin{proposition}\label{prop:apg1_equivalence}
    Let $g\in\mathcal G$ satisfy \cref{asm:boundedness} and $x, z\in\mathcal X$ and $x^-\in\R^d$ satisfy $z=\gamma_{-} x-(\gamma_{-}-1)x^{-}$, $\gamma_-\geq0$, $\gamma=\phi(\gamma_-)$, and $f\in\mathcal F_L$.
    Then, the updates \cref{eq:apg1_equivalence_y}, \cref{eq:apg1_equivalence_x}, and $z^+=\gamma x^+-(\gamma-1)x$ are equivalent to
    \begin{align}
        y&=(1-\gamma^{-1})x+\gamma^{-1}z,\label{eq:apg1_y}\\
        z^{+}&=\prox_{\gamma L^{-1}g}(z-\tfrac\gamma L\nabla f(y)),\label{eq:apg1_z}\\
        x^{+}&=(1-\gamma^{-1})x+\gamma^{-1}z^{+}.\label{eq:apg1_x}
    \end{align}
\end{proposition}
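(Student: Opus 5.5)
The equivalence is shown by direct substitution, treating the three updates one at a time. We take $z=\gamma_-x-(\gamma_--1)x^-$ and $z^+=\gamma x^+-(\gamma-1)x$ as the definitions linking the two parametrizations.

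\emph{Step 1 (the $y$-update).} From the definition of $z$ we have $z-x=(\gamma_--1)(x-x^-)$. Hence
\begin{align}
    x+\frac{\gamma_--1}{\gamma}(x-x^-)=x+\gamma^{-1}(z-x)=(1-\gamma^{-1})x+\gamma^{-1}z .
\end{align}
So \cref{eq:apg1_equivalence_y} coincides with \cref{eq:apg1_y}.

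\emph{Step 2 (the $x^+$-update).} The relation $x^+=(1-\gamma^{-1})x+\gamma^{-1}z^+$ in \cref{eq:apg1_x} is a rearrangement of $z^+=\gamma x^+-(\gamma-1)x$. It remains to show that \cref{eq:apg1_equivalence_x} is equivalent to \cref{eq:apg1_z} under this affine change of variables. Write the prox in \cref{eq:apg1_equivalence_x} as a minimization over $w$:
\begin{align}
    x^+=\argmin_{w}\Bigl\{\tilde g_{x,\gamma}(w)+\tfrac L2\bigl\|w-y+L^{-1}\nabla f(y)\bigr\|^2\Bigr\}.
\end{align}
Substitute $w=(1-\gamma^{-1})x+\gamma^{-1}v$, which is a bijection in $v$. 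Then $\gamma w-(\gamma-1)x=v$, so $\tilde g_{x,\gamma}(w)=(1-\gamma^{-1})g(x)+\gamma^{-1}g(v)$, and the first term is a constant. Using Step 1, $w-y=\gamma^{-1}(v-z)$, so the quadratic becomes
\begin{align}
    \tfrac{L}{2\gamma^2}\bigl\|v-z+\tfrac\gamma L\nabla f(y)\bigr\|^2 .
\end{align}
Multiplying the objective by $\gamma>0$ does not change the minimizer, and the resulting problem is
\begin{align}
    \min_v\Bigl\{g(v)+\tfrac{L}{2\gamma}\bigl\|v-(z-\tfrac\gamma L\nabla f(y))\bigr\|^2\Bigr\}.
\end{align}
Its minimizer is $\prox_{\gamma L^{-1}g}(z-\tfrac\gamma L\nabla f(y))$. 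Since $x^+=w^*$ and $z^+=\gamma x^+-(\gamma-1)x=v^*$, this gives \cref{eq:apg1_z}.

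\emph{Main obstacle.} The only delicate point is bookkeeping. We must check that $\gamma\geq1$, so that $\gamma>0$ and the substitution is well defined; this holds because $\phi(\gamma_-)\geq1$. We must also track the constant $g(x)$, which is finite because $x\in\mathcal X$, and the factor $\gamma^{-2}$ in the quadratic. Finally, all iterates stay in $\mathcal X$: $z^+\in\dom g=\mathcal X$, and $x^+$ is a convex combination of $x,z^+\in\mathcal X$ with weight $\gamma^{-1}\in(0,1]$.
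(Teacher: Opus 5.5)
Your proof is correct and follows essentially the same route as the paper: you handle the $y$-update by substituting $z=\gamma_-x-(\gamma_--1)x^-$, the $x^+$/$z^+$ relation by rearrangement, and the prox step by the affine change of variables $w=(1-\gamma^{-1})x+\gamma^{-1}v$ inside the argmin. The only cosmetic difference is that the paper writes out the two directions of the equivalence separately, whereas your bijection argument covers both at once, using uniqueness of the minimizer of the strongly convex objective.
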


\begin{proof}
    First, we prove that \cref{eq:apg1_y,eq:apg1_z,eq:apg1_x} imply \cref{eq:apg1_equivalence_y}, \cref{eq:apg1_equivalence_x} and $z^+=\gamma x^+-(\gamma-1)x$.
    By substituting $z=\gamma_{-} x-(\gamma_{-}-1)x^{-}$ into \cref{eq:apg1_y}, we have \cref{eq:apg1_equivalence_y} as follows:
    \begin{align}
        y
        &=(1-\gamma^{-1})x+\gamma^{-1}(\gamma_{-}x-(\gamma_{-}-1)x^{-})\\
        &=x+\frac{\gamma_{-}-1}\gamma(x-x^{-}).
    \end{align}
    The identity \cref{eq:apg1_x} gives $z^+=\gamma x^+-(\gamma-1)x$.
    Finally, \cref{eq:apg1_equivalence_x} can be derived as follows:
    \begin{align}
        x^{+}
        &=(1-\gamma^{-1})x+\gamma^{-1}\prox_{\gamma L^{-1}g}\paren*{z-\frac\gamma L\nabla f(y)}\\
        &=(1-\gamma^{-1})x+\gamma^{-1}\prox_{\gamma L^{-1}g}\paren*{\gamma y-(\gamma-1)x-\frac\gamma L\nabla f(y)}\\
        &=(1-\gamma^{-1})x+\gamma^{-1}\argmin_{w\in\R^d}\set*{g(w)+\frac L{2\gamma}\norm{w-\gamma y+(\gamma-1)x+\frac\gamma L\nabla f(y)}^2}\\
        &=\argmin_{w'\in\R^d}\set*{\gamma^{-1}g(\gamma w'-(\gamma-1)x)+\frac L2\norm{w'-y+L^{-1}\nabla f(y)}^2}\\
        &=T_{f,\tilde g_{x,\gamma}}(y),
    \end{align}
    where the first equality follows from \cref{eq:apg1_z,eq:apg1_x} and the second equality follows from \cref{eq:apg1_y}.
    We also use the change of variables $w'=(1-\gamma^{-1})x+\gamma^{-1}w$.

    We next prove the converse, namely that \cref{eq:apg1_equivalence_y,eq:apg1_equivalence_x} and $z^+=\gamma x^+-(\gamma-1)x$ imply \cref{eq:apg1_y,eq:apg1_z,eq:apg1_x}.
    From \cref{eq:apg1_equivalence_y} and $z=\gamma_{-} x-(\gamma_{-}-1)x^{-}$, we have 
    \begin{align}
        y
        &=x+\frac{\gamma_{-}-1}\gamma(x-x^{-})\\
        &=(1-\gamma^{-1})x+\gamma^{-1}z,
    \end{align}
    which gives \cref{eq:apg1_y}.
    The identity $z^+=\gamma x^+-(\gamma-1)x$ gives \cref{eq:apg1_x}.
    It therefore remains to show \cref{eq:apg1_z}.
    Repeating the preceding calculation gives
    \begin{align}
        (1-\gamma^{-1})x+\gamma^{-1}\prox_{\gamma L^{-1}g}\paren*{z-\frac\gamma L\nabla f(y)}
        &=T_{f,\tilde g_{x,\gamma}}(y)
        =x^+
        =(1-\gamma^{-1})x+\gamma^{-1}z^{+},
    \end{align}
    where the second equality follows from \cref{eq:apg1_equivalence_x} and the third equality follows from \cref{eq:apg1_x}.
    This implies that \cref{eq:apg1_z} holds.
\end{proof}

\subsection{Proof of \cref{prop:apg2_equivalence}}\label{sec:apg2_equivalence_proof}
\begin{proof}
    First, we prove that \cref{eq:apg2_beta,eq:apg2_y,eq:apg2_z,eq:apg2_x} imply \cref{eq:apg2_equivalence_beta,eq:apg2_equivalence_y,eq:apg2_equivalence_x} and $z^+=2x^+-x$.
    The identity \cref{eq:apg2_x} gives $z^+=2x^+-x$.
    Since $x^-=2x-z$, we have $x^--x=x-z$.
    Combining this with \cref{eq:apg2_beta,eq:apg2_y} gives \cref{eq:apg2_equivalence_beta,eq:apg2_equivalence_y}.
    To derive the remaining update for $x^+$, we write
    \begin{align}
        x^+
        =\frac{x}2+\frac{z^+}2
        &=\frac {x}2+\frac12\prox_{2L^{-1}g}\paren*{2y-x-\frac2L\nabla f(y)}\\
        &=\frac {x}2+\frac12\argmin_{z\in\R^d}\set*{g(z)+\frac L4\norm{z-2y+x+\frac2L\nabla f(y)}^2}\\
        &=\argmin_{z'\in\R^d}\set*{\frac12g(2z'-x)+\frac L2\norm{z'-y+L^{-1}\nabla f(y)}^2}\\
        &=T_{f,\tilde g_{x,2}}(y),
    \end{align}
    where the first and second equalities use \cref{eq:apg2_x} and \cref{eq:apg2_z}, respectively.
    We also use the change of variables $z'=(z+x)/2$.

    We next prove the converse, namely that \cref{eq:apg2_equivalence_beta,eq:apg2_equivalence_y,eq:apg2_equivalence_x} and $z^+=2x^+-x$ imply \cref{eq:apg2_beta,eq:apg2_y,eq:apg2_z,eq:apg2_x}.
    The identity $x^-=2x-z$ gives $x^--x=x-z$.
    Combining this with \cref{eq:apg2_equivalence_beta,eq:apg2_equivalence_y} gives \cref{eq:apg2_beta,eq:apg2_y}.
    The identity $z^+=2x^+-x$ gives \cref{eq:apg2_x}.
    It therefore remains to show \cref{eq:apg2_z}.
    Repeating the preceding calculation gives
    \begin{align}
        \frac {x}2+\frac12\prox_{2L^{-1}g}\paren*{2y-x-\frac2L\nabla f(y)}
        &=T_{f,\tilde g_{x,2}}(y)
        =x^+
        =\frac{x}2+\frac{z^+}2,
    \end{align}
    where the second and third equalities follow from \cref{eq:apg2_equivalence_x} and $z^+=2x^+-x$, respectively.
    This implies that \cref{eq:apg2_z} holds.
\end{proof}

\subsection{Proof of \cref{prop:fista_new_potential_decrease}}\label{sec:fista_new_potential_decrease_proof}
Before proving \cref{prop:fista_new_potential_decrease}, we introduce lemmas.
The following lemma is an analogue of \cref{prop:pgd_property} for the $\apg_2$ operator.

\begin{lemma}\label{lem:fista_property}
    Let $g\in\mathcal G$ be a function satisfying \cref{asm:boundedness}, $f\in\mathcal F_L$, $x,z,u\in\mathcal X$, and $\alpha\in[0,\tfrac12]$, and let $(x^+,z^+)\coloneqq S_{f}(x,z)$.
    Then, we have 
    \begin{align}
        &\Delta_F(x^+,u)+\frac L2\norm{\alpha(x^+-u)+(1-\alpha)(z^+-x^+)}^2\\
        &\leq(1-\alpha)\Delta_F(x,u)+\frac L2\norm{\alpha(x-u)+\beta(z-x)}^2,
    \end{align}
    where $\beta\coloneqq(1+\tfrac2D\norm{z-x})^{-1}$.
\end{lemma}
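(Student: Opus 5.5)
The plan is to read one step of $S_f$ as a single proximal gradient step on a modified composite function, and then apply \cref{prop:pgd_property} at a suitably chosen comparison point. By \cref{prop:apg2_equivalence}, with $y\coloneqq(1-\beta)x+\beta z$ we have $x^+=T_{f,\tilde g_{x,2}}(y)$ and $z^+=2x^+-x$. Here $\tilde g\coloneqq\tilde g_{x,2}$, given by $\tilde g(w)=\tfrac12 g(x)+\tfrac12 g(2w-x)$, is closed, convex and proper, and its domain is $(\mathcal X+x)/2\subset\mathcal X$. Let $\tilde F\coloneqq f+\tilde g$. We use the comparison point
\begin{align}
    w\coloneqq(1-\alpha)x+\alpha u .
\end{align}
We must check that $w\in\dom\tilde g$. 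This holds because $2w-x=(1-2\alpha)x+2\alpha u\in\mathcal X$ whenever $\alpha\in[0,\tfrac12]$; this is exactly where the restriction on $\alpha$ enters. Then \cref{eq:pgd_fejer} for the pair $(f,\tilde g)$ gives
\begin{align}
    \tilde F(x^+)-\tilde F(w)\leq\frac L2\norm{y-w}^2-\frac L2\norm{x^+-w}^2 .
\end{align}

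Next we compare $\tilde F$ with $F$ on both sides. On the left, $x^+=\tfrac12(x+z^+)$, so convexity of $g$ gives $g(x^+)\leq\tfrac12 g(x)+\tfrac12 g(z^+)=\tilde g(x^+)$. Hence $F(x^+)\leq\tilde F(x^+)$. On the right, convexity of $g$ applied to $(1-2\alpha)x+2\alpha u$ gives
\begin{align}
    \tilde g(w)\leq\tfrac12 g(x)+\tfrac12\bigl((1-2\alpha)g(x)+2\alpha g(u)\bigr)=(1-\alpha)g(x)+\alpha g(u).
\end{align}
Convexity of $f$ gives $f(w)\leq(1-\alpha)f(x)+\alpha f(u)$. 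Together these yield $\tilde F(w)\leq(1-\alpha)F(x)+\alpha F(u)$. Substituting both comparisons into the inequality above and subtracting $F(u)$ from both sides gives
\begin{align}
    \Delta_F(x^+,u)\leq(1-\alpha)\Delta_F(x,u)+\frac L2\norm{y-w}^2-\frac L2\norm{x^+-w}^2 .
\end{align}

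It remains to identify the two squared norms, which is a short algebraic step. First, $y-w=(1-\beta)x+\beta z-(1-\alpha)x-\alpha u=\alpha(x-u)+\beta(z-x)$. Second, since $z^+-x^+=x^+-x$, we have
\begin{align}
    \alpha(x^+-u)+(1-\alpha)(z^+-x^+)=x^+-\alpha u-(1-\alpha)x=x^+-w .
\end{align}
Rearranging then gives the claimed inequality. The particular value of $\beta$ plays no role here, because the lemma holds for any $\beta$ with $y\in\mathcal X$. I do not expect a genuine obstacle. The one step requiring care is choosing $w$ so that it lies in $\dom\tilde g_{x,2}$ and so that both sides convert back to $F$ through convexity of $g$ and $f$. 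The factor $2$ in $\tilde g_{x,2}$ together with $\alpha\leq\tfrac12$ is exactly what makes this work.
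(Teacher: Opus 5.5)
Your argument is correct and is essentially the paper's proof. The paper uses the same comparison point $(1-\alpha)x+\alpha u$, the same comparisons $g\leq\tilde g_{x,2}$ and $\tilde g_{x,2}((1-\alpha)x+\alpha u)\leq(1-\alpha)g(x)+\alpha g(u)$, the same convexity bound on $f$, and the same two algebraic identities. The only difference is that the paper writes out the strong-convexity and descent-lemma argument for the prox model at $y$ instead of citing \cref{prop:pgd_property}.

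That choice matters here. Read literally for the pair $(f,\tilde g_{x,2})$, \cref{prop:pgd_property} requires the gradient point $y$ to lie in $\dom\tilde g_{x,2}=(\mathcal X+x)/2$. This can fail when $\beta>\frac12$: for example, $\mathcal X=[0,1]$, $D=1$, $x=0.9$, $z=1$ gives $y\approx0.983\notin[0.45,0.95]$. You checked this membership for $w$ but not for $y$.

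The fix is easy. The proof of \cref{prop:pgd_property} only uses $L$-smoothness and convexity of $f$ on a convex set containing $y$, $w$, and $x^+$, and $\mathcal X$ is such a set. You should either state this or inline that short argument as the paper does.
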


\begin{proof}
    From \cref{prop:apg2_equivalence}, we have $x^+=T_{f,\tilde g_{x,2}}(y)$, where $y\coloneqq(1-\beta)x+\beta z$.
    Thus, we have
    \begin{align}
        x^+
        &=\argmin_{z'\in\R^d}\set*{\tilde g_{x,2}(z')+\frac L2\norm{z'-y+L^{-1}\nabla f(y)}^2}\\
        &=\argmin_{z'\in\R^d}\set*{\tilde g_{x,2}(z')+f(y)+\inner{\nabla f(y),z'-y}+\frac L2\norm{z'-y}^2}
        =\argmin_{z'\in\R^d}\tilde F_{x,y}(z'),
    \end{align}
    where $\tilde F_{x,y}(z')\coloneqq \tilde g_{x,2}(z')+f(y)+\inner{\nabla f(y),z'-y}+\tfrac L2\norm{z'-y}^2$.
    Since $\tilde F_{x,y}$ is $L$-strongly convex, we have
    \begin{align}
        \tilde F_{x,y}(x^+)+\frac L2\norm{x^+-(1-\alpha)x-\alpha u}^2
        &\leq\tilde F_{x,y}((1-\alpha)x+\alpha u).\label{eq:fista_property_1}
    \end{align}
    
    Next, we give a lower bound of the left-hand side of \cref{eq:fista_property_1}.
    Since $f$ is $L$-smooth and $g(z')\leq\tilde g_{x,2}(z')$ by the convexity of $g$, we have $F(x^+)\leq\tilde F_{x,y}(x^+)$.
    By the definition of $S_f$, we have $z^+=2x^+-x$, and thus we have
    \begin{align}
        x^+-(1-\alpha)x-\alpha u
        &=\alpha(x^+-u)+(1-\alpha)(z^+-x^+).
    \end{align}
    Combining these inequalities gives
    \begin{align}
        F(x^+)+\frac L2\norm{\alpha(x^+-u)+(1-\alpha)(z^+-x^+)}^2
        &\leq\tilde F_{x,y}(x^+)+\frac L2\norm{x^+-(1-\alpha)x-\alpha u}^2.\label{eq:fista_property_2}
    \end{align}

    Finally, we give an upper bound of the right-hand side of \cref{eq:fista_property_1}.
    By the definition of $\tilde F_{x,y}$, we have
    \begin{align}
        &\tilde F_{x,y}((1-\alpha)x+\alpha u)\\
        &=\tilde g_{x,2}((1-\alpha)x+\alpha u)+\Big(f(y)+\inner{\nabla f(y),(1-\alpha)x+\alpha u-y}\Big)+\frac L2\norm{(1-\alpha)x+\alpha u-y}^2.\label{eq:fista_property_3}
    \end{align}
    For the first term in \cref{eq:fista_property_3}, the convexity of $g$ gives
    \begin{align}
        \tilde g_{x,2}((1-\alpha)x+\alpha u)
        &=\frac12g(x)+\frac12g((1-2\alpha)x+2\alpha u)\\
        &\leq\frac12g(x)+\frac{1-2\alpha}2g(x)+\alpha g(u)\\
        &=(1-\alpha)g(x)+\alpha g(u).
    \end{align}
    For the second term in \cref{eq:fista_property_3}, the convexity of $f$ gives
    \begin{align}
        f(y)+\inner{\nabla f(y),(1-\alpha)x+\alpha u-y}
        \leq f((1-\alpha)x+\alpha u)
        &\leq(1-\alpha)f(x)+\alpha f(u).
    \end{align}
    For the third term in \cref{eq:fista_property_3}, the definition of $y$ gives
    \begin{align}
        (1-\alpha)x+\alpha u-y
        &=-\alpha(x-u)-\beta(z-x).
    \end{align}
    Combining these relations gives
    \begin{align}
        \tilde F_{x,y}((1-\alpha)x+\alpha u)
        &\leq(1-\alpha)F(x)+\alpha F(u)+\frac L2\norm{\alpha(x-u)+\beta(z-x)}^2.\label{eq:fista_property_4}
    \end{align}

    Combining \cref{eq:fista_property_1,eq:fista_property_2,eq:fista_property_4} gives the desired inequality.
\end{proof}

\begin{lemma}\label{lem:geometric_inequality}
    Let $v,w\in\R^d$, $p\geq5$, and $\beta\coloneqq(1+\tfrac2D\norm{w})^{-1}$.
    Suppose $\norm{v},\norm{w}\leq D$.
    Then, we have
    \begin{align}
        \norm{3v+(p+1)\beta w}^2
        &\leq\theta\norm{3v+(p-3)w}^2+60(1-\theta)D^2,
    \end{align}
    where $\theta\coloneqq(p+1)(p-2)p^{-2}$.
\end{lemma}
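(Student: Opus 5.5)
We reduce the claim to a one-variable scalar inequality by expanding around the vector $X\coloneqq 3v+(p-3)w$ that appears on the right-hand side. Set $a\coloneqq\norm{w}\in[0,D]$ and $c\coloneqq(p+1)\beta-(p-3)$, so that $3v+(p+1)\beta w=X+cw$. We also record
\begin{align}
    1-\theta=\frac{p^2-(p+1)(p-2)}{p^2}=\frac{p+2}{p^2}\in(0,1).
\end{align}
Then
\begin{align}
    \norm{3v+(p+1)\beta w}^2-\theta\norm{X}^2
    =(1-\theta)\norm{X}^2+2c\inner{X,w}+c^2a^2 ,
\end{align}
and the goal becomes to bound this quadratic form in $X$ by $60(1-\theta)D^2$.

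The naive route, the triangle inequality on the left-hand side alone, is useless: $\norm{X}$ may be small while $(p+1)\beta a$ is of order $p a$. So we keep the cross term and complete the square in $X$:
\begin{align}
    (1-\theta)\norm{X}^2+2c\inner{X,w}+c^2a^2
    =(1-\theta)\norm*{X+\tfrac{c}{1-\theta}w}^2-\tfrac{\theta}{1-\theta}c^2a^2 .
\end{align}
Here $X+\tfrac{c}{1-\theta}w=3v+\kappa w$ with $\kappa\coloneqq p-3+\tfrac{p^2}{p+2}c$. Using $\norm{v}\le D$, this yields the scalar sufficient condition
\begin{align}
    (1-\theta)\paren*{3D+\abs{\kappa}a}^2-\frac{\theta}{1-\theta}c^2a^2\le 60(1-\theta)D^2 .
\end{align}
We then normalize $t\coloneqq a/D\in[0,1]$, so that $\beta=(1+2t)^{-1}$. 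After dividing by $(1-\theta)D^2$, everything is an explicit function of $(p,t)$ with $p\ge5$.

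Next we study $c=\frac{p+1}{1+2t}-(p-3)=\frac{4-2(p-3)t}{1+2t}$ and $\kappa$. The heuristic is that $\kappa\approx p\,c+p$ and $\tfrac{\theta}{(1-\theta)^2}\approx p^2$. So the negative term $-\tfrac{\theta}{(1-\theta)^2}c^2t^2$ is of size $p^2c^2t^2$, and it should dominate the growth $(\abs{\kappa}t)^2\approx p^2(c+1)^2t^2$ coming from the first term, up to lower-order pieces. We split into two cases. In the regime $t\lesssim 1/p$ we have $c=O(1)$, $\abs{\kappa}t=O(1)$, and the left side is $O(1)$ directly. In the regime $pt\gtrsim1$, $c$ is negative of size about $-\min\{pt,p\}/(1+2t)$. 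There we expand $(3+\abs{\kappa}t)^2\le 2\cdot9+2\kappa^2t^2$ (or use a sharper Young split) and show
\begin{align}
    \kappa^2t^2\le\frac{\theta}{(1-\theta)^2}c^2t^2+O(1).
\end{align}
This follows by writing $\kappa=\tfrac{p^2}{p+2}(c+\epsilon)$ with $\epsilon=\tfrac{(p-3)(p+2)}{p^2}\le1$ and comparing $(c+\epsilon)^2$ against $\tfrac{\theta(p+2)^2}{p^2(1-\theta)}c^2\cdot\tfrac{(1-\theta)}{1}$ term by term.

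The main obstacle is this last scalar verification with the specific constant $60$. We must check that the leading coefficients in $p$ cancel, i.e.\ that $\tfrac{\theta}{(1-\theta)^2}$ exceeds $\tfrac{p^4}{(p+2)^2}$ by enough to absorb the cross term $2\epsilon c$ when $c<0$, and that $p\ge5$ suffices. Our first attempt would be to clear denominators, multiply through by $(1+2t)^2p^4$, and verify nonnegativity of the resulting polynomial in $(p,t)$ on $[5,\infty)\times[0,1]$, substituting $p=5+q$ to exhibit nonnegative coefficients. If that is too crude, we would fall back to the two-case split above with explicit numerical thresholds on $pt$.
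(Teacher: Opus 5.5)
Your reduction is correct, and in fact lossless. For fixed $w$, the quantity $\|3v+(p+1)\beta w\|^2-\theta\|X\|^2=(1-\theta)\|3v+\kappa w\|^2-\frac{\theta}{1-\theta}c^2a^2$ is maximized over $\|v\|\le D$ at $v=\pm Dw/\|w\|$. So the lemma is \emph{equivalent} to your scalar inequality $(3+|\kappa|t)^2-\frac{\theta}{(1-\theta)^2}c^2t^2\le60$ on $p\ge5$, $t\in[0,1]$. The gap is that all of the lemma's content sits in the step you call the main obstacle, and there you give only heuristics. Moreover, the routes you sketch do not work. Set $M=p^2/(p+2)$, so that $1-\theta=1/M$ and $\frac{\theta}{(1-\theta)^2}=M(M-1)=\theta\frac{p^4}{(p+2)^2}$. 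This never exceeds $\frac{p^4}{(p+2)^2}$, so the criterion you state is backwards. Since $\kappa=Mc+(p-3)$,
\[
\kappa^2-M(M-1)c^2=Mc^2+2M(p-3)c+(p-3)^2=M(p+1)^2\beta^2-(M-1)(p-3)^2 .
\]
For $c<0$ it is the cross term (your $2\epsilon c$, scaled by $M^2$) that does the work, beating $Mc^2$ because $|c|\le\frac23(p-3)$. At $t=1$ this remainder is only about $-\frac89p^3$, while $\kappa^2\approx\frac49p^4$ and $6|\kappa|t\approx4p^2$. Hence the split $(3+|\kappa|t)^2\le18+2\kappa^2t^2$, or any Young split $9(1+\eta^{-1})+(1+\eta)\kappa^2t^2$ with $\eta$ independent of $p$, leaves an unabsorbable $\eta\kappa^2t^2\sim p^4$. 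Taking $\eta=O(1/p)$ instead makes $9(1+\eta^{-1})$ grow like $p$. The term $6|\kappa|t$ must be kept exactly and absorbed by the negative order-$p^3$ remainder.

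Your polynomial fallback fails too. First, the denominators are $(p+2)$ and $(1+2t)$, not $p^4$. Second, take the branch $v=-Dw/\|w\|$ (the $(3-\kappa t)^2$ branch of $(3+|\kappa|t)^2$), where the condition reads $M(3-(p+1)\beta t)^2-(M-1)(3-(p-3)t)^2\le60$. Clearing $(p+2)(1+2t)^2$ gives a polynomial whose $t^2$-coefficient is $-21p^3+120p^2+183p+246$, which has leading coefficient $-21$ in $q=p-5$. So no substitution $p=5+q$ yields nonnegative coefficients. Positivity comes only from compensation across monomials: to leading order in $p$ at fixed $s=pt$, the polynomial is $p(51+30s-21s^2+4s^3)$, positive because $21^2<4\cdot4\cdot30$. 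An explicit two-variable certificate is therefore required, and that is what is missing.

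The paper avoids this by not maximizing over $v$ exactly. With $D=1$ and $A=(p+1)\beta-(p-3)\theta$, it uses the weighted AM--GM bound $6|A|\|v\|\|w\|\le\frac{9\|v\|^2\|w\|}{1+\|w\|}+A^2\|w\|(1+\|w\|)$, whose weight is tuned to $\beta(1+2\|w\|)=1$. The order-$p^4$ terms then cancel identically. Using $C=p+1-\theta(p-3)\in[4,5]$, $\frac1{1-\theta}\le p-1$ and $\frac{\theta}{1-\theta}\ge p-3$, everything collapses to $9+40\beta(p-1)\|w\|-8\beta(p-3)^2\|w\|^2$. This is a concave quadratic in $(p-3)\|w\|$, bounded by $9+\beta(50+80\|w\|)\le60$.
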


\begin{proof}
    Suppose $D=1$ without loss of generality.
    Let 
    \begin{align}
        E\coloneqq\norm{3v+(p+1)\beta w}^2-\theta\norm{3v+(p-3)w}^2.
    \end{align}
    It is sufficient to show that $E\leq60(1-\theta)$.
    First, expanding the norms gives
    \begin{align}
        E
        &=9(1-\theta)\norm{v}^2+6A\inner{v,w}+B\norm{w}^2,
    \end{align}
    where $A\coloneqq(p+1)\beta-(p-3)\theta$ and $B\coloneqq(p+1)^2\beta^2-(p-3)^2\theta$.
    For the second term, the Cauchy--Schwarz inequality and the AM-GM inequality give
    \begin{align}
        6A\inner{v,w}
        &\leq6|A|\norm{v}\norm{w}
        \leq\frac{9\norm{v}^2\norm{w}}{1+\norm{w}}+A^2\norm{w}(1+\norm{w}).
    \end{align}
    Thus, we have
    \begin{align}
        E
        &\leq9(1-\theta)+\frac{9\norm{w}}{1+\norm{w}}+\norm{w}\Big(A^2(1+\norm{w})+B\norm{w}\Big),\label{eq:geometric_inequality_1}
    \end{align}
    where we used $\norm{v}\leq 1$.

    Next, we give an upper bound of the last term in \cref{eq:geometric_inequality_1}.
    Let $C\coloneqq p+1-\theta(p-3)$.
    By definition, we have
    \begin{align}
        C
        &=p+1-(p+1)(p-2)(p-3)p^{-2}
        =(5p^2-p-6)p^{-2}
        \in[4,5],
    \end{align}
    where we used $p\geq5$ for the lower bound.
    By using $C$ and the relation $1=\beta(1+2\norm{w})$, the coefficients $A$ and $B$ can be rewritten as follows:
    \begin{align}
        A
        &=\beta\Big(p+1-\theta(p-3)(1+2\norm{w})\Big)
        =\beta\Big(C-2\theta(p-3)\norm{w}\Big)
    \end{align}
    and
    \begin{align}
        B
        &=(C+\theta(p-3))^2\beta^2-(p-3)^2\theta.
    \end{align}
    Thus, by an algebraic manipulation, we have
    \begin{align}
        &A^2(1+\norm{w})+B\norm{w}\\
        &=\beta^2\Big(C-2\theta(p-3)\norm{w}\Big)^2(1+\norm{w})+\Big((C+\theta(p-3))^2\beta^2-(p-3)^2\theta\Big)\norm{w}\\
        &=\beta^2\Big(C^2-4\theta(p-3)\norm{w}C+4\theta^2(p-3)^2\norm{w}^2\Big)(1+\norm{w})\\&\quad+\beta^2\Big(C^2+2\theta(p-3)C+\theta^2(p-3)^2\Big)\norm{w}-(p-3)^2\theta\norm{w}\\
        &=\beta^2\Big((1+2\norm{w})C^2-2\theta(p-3)(1+2\norm{w})\norm{w}C+(1+2\norm{w})^2\theta^2(p-3)^2\norm{w}\Big)\\&\quad-(p-3)^2\theta\norm{w}.
    \end{align}
    Using $\beta(1+2\norm{w})=1$ again simplifies this expression as follows:
    \begin{align}
        A^2(1+\norm{w})+B\norm{w}
        &=\beta C^2-2\beta\theta(p-3)\norm{w}C-\theta(1-\theta)(p-3)^2\norm{w}.
    \end{align}
    Combining this identity with \cref{eq:geometric_inequality_1} gives
    \begin{align}
        \frac E{1-\theta}
        &\leq9+\frac{\norm{w}}{1-\theta}\paren*{\frac{9}{1+\norm{w}}+\beta C^2}-\frac{2\beta\theta(p-3)\norm{w}^2C}{1-\theta}-\theta(p-3)^2\norm{w}^2\\
        &\leq9+\frac{\norm{w}}{1-\theta}\paren*{\frac{9}{1+\norm{w}}+25\beta}-\frac{8\beta\theta(p-3)\norm{w}^2}{1-\theta},\label{eq:geometric_inequality_2}
    \end{align}
    where $C\in[4,5]$ is used for the last inequality.

    Finally, we show the right-hand side of \cref{eq:geometric_inequality_2} is bounded by $60$.
    For the second term, we have
    \begin{align}
        \frac{9}{1+\norm{w}}
        &=\frac{9}{1+\norm{w}}-\frac{15}{1+2\norm{w}}+15\beta
        =\frac{3\norm{w}-6}{(1+\norm{w})(1+2\norm{w})}+15\beta
        \leq15\beta,
    \end{align}
    where the first equality follows from $\beta(1+2\norm{w})=1$ and the inequality follows from $\norm{w}\leq1$.
    In addition, we have
    \begin{align}
        \frac1{1-\theta}
        &=\frac{p^2}{p+2}
        =p-2+\frac{4}{p+2}
        \leq p-1,
    \end{align}
    where the inequality follows from $p\geq5$.
    For the third term, we have
    \begin{align}
        \frac{\theta}{1-\theta}
        &=\frac{(p+1)(p-2)}{p+2}
        =p-3+\frac{4}{p+2}
        \geq p-3.
    \end{align}
    Combining these inequalities with \cref{eq:geometric_inequality_2} gives
    \begin{align}
        \frac E{1-\theta}
        &\leq9+40\beta(p-1)\norm{w}-8\beta(p-3)^2\norm{w}^2\\
        &=9+80\beta\norm{w}+40\beta(p-3)\norm{w}-8\beta(p-3)^2\norm{w}^2\\
        &=9+80\beta\norm{w}+50\beta-8\beta\paren*{(p-3)\norm{w}-\frac52}^2\\
        &\leq9+\beta\paren{50+80\norm{w}}\\
        &\leq60,
    \end{align}
    where $\beta(1+2\norm{w})=1$ is used for the last inequality.
\end{proof}

Using these lemmas, we can prove \cref{prop:fista_new_potential_decrease} as follows.

\begin{proof}[Proof of \cref{prop:fista_new_potential_decrease}]
    Let $(x^+,z^+)\coloneqq S_f(x,z)$ and $\beta\coloneqq(1+\tfrac2D\norm{z-x})^{-1}$.
    \cref{lem:fista_property} with $\alpha=3/(p+1)\in[0,\tfrac12]$ gives
    \begin{align}
        &\Delta_F(x^+,u)+\frac L2\norm{\frac3{p+1}(x^+-u)+\frac{p-2}{p+1}(z^+-x^+)}^2\\
        &\leq\frac{p-2}{p+1}\Delta_F(x,u)+\frac L2\norm{\frac3{p+1}(x-u)+\beta(z-x)}^2.
    \end{align}
    Multiplying both sides by $(p+1)^2$ gives
    \begin{align}
        \Phi_{F,p+1}(x^+,z^+,u)
        &=(p+1)^2\Delta_F(x^+,u)+\frac L2\norm{3(x^+-u)+(p-2)(z^+-x^+)}^2\\
        &\leq\theta p^2\Delta_F(x,u)+\frac L2\norm{3(x-u)+(p+1)\beta(z-x)}^2\\
        &\leq\theta p^2\Delta_F(x,u)+\frac {\theta L}2\norm{3(x-u)+(p-3)(z-x)}^2+30(1-\theta)LD^2\\
        &=\theta\Phi_{F,p}(x,z,u)+30(1-\theta)LD^2,
    \end{align}
    where the second inequality follows from \cref{lem:geometric_inequality} with $v=x-u$ and $w=z-x$, which completes the proof.
\end{proof}

%% file: 92_appendix_tvapg.tex
\section{DETAILS FOR SECTION~\ref{sec:tvapg}}\label{sec:tvapg_proofs}
\subsection{Uniform Bound on the Potential Function}\label{sec:uniform_potential_bound}
This section provides a uniform bound on the potential function.

First, we show a recursive inequality for the function value gap.

\begin{lemma}\label{lem:fista_loose_recursion}
    Let $g\in\mathcal G$ be a function satisfying \cref{asm:boundedness}, $f\in\mathcal F_L$, $x,z,u\in\mathcal X$, and $(x^+,z^+)\coloneqq S_{f}(x,z)$.
    Then, we have 
    \begin{align}
        \Delta_F(x^+,u)
        &\leq\frac12\Delta_F(x,u)+\frac{LD^2}2.
    \end{align}
\end{lemma}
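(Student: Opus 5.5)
The natural route is \cref{lem:fista_property} with $\alpha=\tfrac12$, the largest admissible value. This choice makes the coefficient of $\Delta_F(x,u)$ on the right-hand side exactly $1-\alpha=\tfrac12$, which is the contraction factor in the claim. With $\alpha=\tfrac12$ the lemma reads
\begin{align}
    \Delta_F(x^+,u)+\frac L2\norm*{\tfrac12(x^+-u)+\tfrac12(z^+-x^+)}^2
    \leq\frac12\Delta_F(x,u)+\frac L2\norm*{\tfrac12(x-u)+\beta(z-x)}^2,
\end{align}
where $\beta=(1+\tfrac2D\norm{z-x})^{-1}\in(0,1]$.

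First, I drop the nonnegative squared-norm term on the left-hand side. Second, I bound the remaining norm on the right-hand side by the triangle inequality:
\begin{align}
    \norm*{\tfrac12(x-u)+\beta(z-x)}\leq\tfrac12\norm{x-u}+\beta\norm{z-x}.
\end{align}
Since $x,u\in\mathcal X$, \cref{asm:boundedness} gives $\norm{x-u}\leq D$, so the first part is at most $D/2$.

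For the second part, I use the definition of $\beta$ rather than the crude bound $\beta\leq1$:
\begin{align}
    \beta\norm{z-x}=\frac{\norm{z-x}}{1+\tfrac2D\norm{z-x}}\leq\frac D2,
\end{align}
which holds for every value of $\norm{z-x}$. This is the only step needing care: with $\beta\le1$ and $\norm{z-x}\le D$ one gets only $\tfrac32D$ and hence the weaker constant $\tfrac98LD^2$. The normalization of $\beta$ is exactly what yields $\beta\norm{z-x}\le D/2$.

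Combining the two parts, the norm is at most $D$, so the quadratic term on the right-hand side is at most $\tfrac L2D^2$. This gives $\Delta_F(x^+,u)\leq\tfrac12\Delta_F(x,u)+\tfrac{LD^2}2$, as claimed.

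The required hypotheses are all satisfied: $x,z,u\in\mathcal X$ holds by assumption, and $\alpha=\tfrac12\in[0,\tfrac12]$. No time variation is involved, since this is a single application of $S_f$.
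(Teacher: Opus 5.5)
Your proof is correct and follows the paper's route: apply \cref{lem:fista_property} with $\alpha=\tfrac12$, drop the nonnegative term on the left, and bound $\norm{\tfrac12(x-u)+\beta(z-x)}$ by $D$.

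The only difference is in how you get this last bound. The paper regroups the vector as $\bigl((1-\beta)x+\beta z\bigr)-\tfrac12(x+u)$. This is a difference of two points of the convex set $\mathcal X=\dom g$, so the bound $D$ follows from \cref{asm:boundedness} using only $\beta\in[0,1]$.

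Your remark that the specific normalization of $\beta$ is ``exactly what yields'' the constant therefore overstates things. Likewise, the claim that $\beta\le1$ alone would only give $\tfrac98LD^2$ is an artifact of using the triangle inequality, not a limitation of the lemma. Your sharper estimate $\beta\norm{z-x}\le D/2$ is valid, but it is not needed.
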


\begin{proof}
    Let $\beta\coloneqq(1+\tfrac2D\norm{z-x})^{-1}$.
    From \cref{lem:fista_property} with $\alpha=\tfrac12$, we have
    \begin{align}
        \Delta_F(x^+,u)
        &\leq\frac12\Delta_F(x,u)+\frac L2\norm{\frac12(x-u)+\beta(z-x)}^2\\
        &=\frac12\Delta_F(x,u)+\frac L2\norm{((1-\beta)x+\beta z)-\frac12(x+u)}^2\\
        &\leq\frac12\Delta_F(x,u)+\frac {LD^2}2,
    \end{align}
    where the last inequality follows from \cref{asm:boundedness}.
\end{proof}

We next establish a uniform bound on the function value gap after the first iteration of each round of \cref{alg:tvfista_new}.

\begin{lemma}\label{lem:uniform_bound}
    Suppose \cref{asm:problem} holds.
    Let $x_0,u\in\mathcal X$, $z_0\coloneqq x_0$, and $n\geq1$, and let $(x_k,z_k)_{k\geq1}\subset \mathcal X^2$ be a sequence generated by \cref{alg:tvfista_new}.
    Let
    \begin{align}
        x_0^+
        \coloneqq z_0^+
        &\coloneqq T_{f_0,g}(x_0)\quad\text{and}\quad
        (x_{k}^+,z_{k}^+)
        \coloneqq S_{f_k}(x_k,z_k)\quad\text{for }k\geq1.
    \end{align}
    Then, we have 
    \begin{align}
        \Delta_{F_k}(x_{k}^+,u)
        &\leq LD^2+2\delta
    \end{align}
    for all $k\geq0$.
\end{lemma}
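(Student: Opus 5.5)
The plan is induction on $k$. The quantity $\Delta_{F_k}(\cdot,u)$ is propagated through every inner iteration of \cref{alg:tvfista_new}, using only three ingredients: the PG inequality \cref{eq:pgd_fejer}, the loose contraction of \cref{lem:fista_loose_recursion}, and the variation bound \cref{asm:t_lipschitz}. Throughout, $u\in\mathcal X$ is fixed. Note that $\Delta_F(\cdot,u)$ may be negative; this is harmless because every bound below is an upper bound and all maps used are monotone.

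\emph{Base case $k=0$.} Apply \cref{prop:pgd_property}, inequality \cref{eq:pgd_fejer}, with $f=f_0$, $x=x_0$ and $y=u$. Dropping the nonpositive term gives
$\Delta_{F_0}(x_0^+,u)\le \frac L2\|x_0-u\|^2\le \frac{LD^2}{2}$ by \cref{asm:boundedness}. This is at most $LD^2+2\delta$.

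\emph{Within a round.} Let $M\coloneqq LD^2+2\delta$, and suppose $w\in\mathcal X$ is an iterate in round $k$ with $\Delta_{F_k}(w,u)\le M$. By \cref{lem:fista_loose_recursion}, the next $\apg_2$ iterate $w'$ on $f_k$ satisfies
\begin{align}
\Delta_{F_k}(w',u)\le \tfrac12 M+\tfrac{LD^2}{2}=LD^2+\delta\le M .
\end{align}
So the bound $M$ is invariant under $S_{f_k}$. Starting from $x_k^+$ (for $k=0$, from $x_0^+=z_0^+$), the remaining $n-1$ steps of round $k$ therefore keep the gap at most $M$. This yields $\Delta_{F_k}(x_{k+1},u)\le M$, and it also covers $n=1$, where $x_{k+1}=x_k^+$. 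In fact, whenever at least one further step is taken, the bound improves to $LD^2+\delta$. I will keep track of this refinement, but I do not expect to need it.

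\emph{Round transition.} This is the step that needs care to keep the constants closed. By \cref{asm:t_lipschitz} applied to the pair $(x_{k+1},u)$, and since $g$ is time-invariant,
\begin{align}
\Delta_{F_{k+1}}(x_{k+1},u)\le \Delta_{F_k}(x_{k+1},u)+2\delta\le LD^2+4\delta .
\end{align}
Then $x_{k+1}^+=S_{f_{k+1}}(x_{k+1},z_{k+1})$, and \cref{lem:fista_loose_recursion} gives
\begin{align}
\Delta_{F_{k+1}}(x_{k+1}^+,u)\le \tfrac12(LD^2+4\delta)+\tfrac{LD^2}{2}=LD^2+2\delta .
\end{align}
This closes the induction.

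The main obstacle is exactly this bookkeeping. The naive worry is that the $2\delta$ drift accumulates from round to round. It does not, because the factor $\tfrac12$ in \cref{lem:fista_loose_recursion} halves the drift before it is measured again at $x_{k+1}^+$. The invariant level $LD^2+2\delta$ is precisely the fixed point of the map $M\mapsto \tfrac12(M+2\delta)+\tfrac{LD^2}{2}$, so choosing the induction hypothesis at $x_k^+$ rather than at $x_k$ is the key decision.
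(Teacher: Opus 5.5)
Your proof is correct and follows the paper's argument essentially step for step: the base case via \cref{eq:pgd_fejer}, propagation through the remaining $n-1$ inner steps of each round via \cref{lem:fista_loose_recursion}, the $2\delta$ drift from \cref{asm:t_lipschitz}, and one final halving step that closes the induction at $LD^2+2\delta$. The differences are cosmetic: your base bound is the sharper $LD^2/2$ (the paper states $LD^2$), and you present the inner propagation as invariance of the level $LD^2+2\delta$ rather than unrolling it into the paper's $2^{-n+1}$ form.
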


\begin{proof}
    We prove the lemma by induction.
    For $k=0$, we have
    \begin{align}
        \Delta_{F_0}(x_0^+,u)
        &=\Delta_{F_0}(T_{f_0,g}(x_0),u)
        \leq\frac L2\norm{x_0-u}^2
        \leq LD^2,
    \end{align}
    where the first inequality follows from \cref{prop:pgd_property} and the second inequality follows from \cref{asm:boundedness}.

    Assume that the bound holds for some $k\geq0$.
    Since $(x_{k+1},z_{k+1})=S_{f_k}^{n-1}(x_k^+,z_k^+)$, repeated application of \cref{lem:fista_loose_recursion} over the remaining $n-1$ iterations gives
    \begin{align}
        \Delta_{F_k}(x_{k+1},u)
        &\leq2^{-n+1}\Delta_{F_k}(x_k^+,u)+\paren*{1-2^{-n+1}}LD^2\\
        &\leq2^{-n+1}(LD^2+2\delta)+\paren*{1-2^{-n+1}}LD^2\\
        &\leq LD^2+2\delta,
    \end{align}
    where the second inequality follows from the induction hypothesis.
    Combining this with \cref{asm:t_lipschitz}, we have
    \begin{align}
        \Delta_{F_{k+1}}(x_{k+1},u)
        &\leq\Delta_{F_k}(x_{k+1},u)+2\delta
        \leq LD^2+4\delta.\label{eq:uniform_bound_1}
    \end{align}
    Since $(x_{k+1}^+,z_{k+1}^+)=S_{f_{k+1}}(x_{k+1},z_{k+1})$, \cref{lem:fista_loose_recursion} implies
    \begin{align}
        \Delta_{F_{k+1}}(x_{k+1}^+,u)
        &\leq\frac12\Delta_{F_{k+1}}(x_{k+1},u)+\frac {LD^2}2
        \leq LD^2+2\delta,
    \end{align}
    where the last inequality follows from \cref{eq:uniform_bound_1}, which completes the induction.
\end{proof}

Next, we derive a recursive inequality for the potential function $\Phi$ using \cref{prop:fista_new_potential_decrease}.

\begin{lemma}\label{lem:fista_new_potential_decrease_iterative}
    Let $g\in\mathcal G$ be a function satisfying \cref{asm:boundedness}, $f\in\mathcal F_L$, $x,z,u\in\mathcal X$, $n\geq0$, and $p\geq5$.
    Then, we have 
    \begin{align}
        \rho_{p+n}\Phi^{(n)}
        &\leq\rho_p\Phi^{(0)}+30LD^2\paren*{n-\frac2p+\frac2{p+n}},
    \end{align}
    where $\Phi^{(i)}\coloneqq\Phi_{F,p+i}(S_{f}^i(x,z),u)$ and $\rho_i\coloneqq(i-1)(i-2)/i$.
\end{lemma}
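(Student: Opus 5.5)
The plan is to iterate \cref{prop:fista_new_potential_decrease} $n$ times, multiplying each step by a weight chosen so that the contraction factor $\theta$ turns into a telescoping relation between consecutive $\rho$'s. The case $n=0$ is trivial, since both sides coincide and the additive term is $30LD^2(0-\tfrac2p+\tfrac2p)=0$. So assume $n\geq1$.

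Fix $i\in\{0,\dots,n-1\}$ and set $q\coloneqq p+i\geq5$. Apply \cref{prop:fista_new_potential_decrease} at the point $S_f^i(x,z)\in\mathcal X^2$ with parameter $q$. This is legitimate because $S_f$ maps $\mathcal X^2$ into $\mathcal X^2$. It gives
\begin{align}
    \Phi^{(i+1)}\leq\theta_q\Phi^{(i)}+30(1-\theta_q)LD^2,\qquad \theta_q\coloneqq\frac{(q+1)(q-2)}{q^2}.
\end{align}
Multiply this inequality by $\rho_{q+1}=q(q-1)/(q+1)$, which is positive for $q\geq5$, so the direction of the inequality is preserved. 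The key identity is
\begin{align}
    \rho_{q+1}\theta_q=\frac{q(q-1)}{q+1}\cdot\frac{(q+1)(q-2)}{q^2}=\frac{(q-1)(q-2)}{q}=\rho_q.
\end{align}
For the additive term, $1-\theta_q=(q+2)/q^2$, and hence
\begin{align}
    \rho_{q+1}(1-\theta_q)=\frac{(q-1)(q+2)}{q(q+1)}=1-\frac{2}{q(q+1)}=1-\frac2q+\frac2{q+1}.
\end{align}
Together these give the one-step bound $\rho_{p+i+1}\Phi^{(i+1)}\leq\rho_{p+i}\Phi^{(i)}+30LD^2\bigl(1-\tfrac2{p+i}+\tfrac2{p+i+1}\bigr)$.

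Now sum over $i=0,\dots,n-1$. The $\rho\Phi$ terms telescope to $\rho_{p+n}\Phi^{(n)}-\rho_p\Phi^{(0)}$. In the additive term, the $1$'s contribute $n$ and the fractions telescope to $-\tfrac2p+\tfrac2{p+n}$, which is exactly the claimed bound.

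There is no real obstacle here. The only nontrivial step is recognizing that $\rho_{q+1}$ is the right weight, i.e.\ that $\rho_{q+1}\theta_q=\rho_q$. This is forced by requiring that the weights cancel the factor $\theta_q$ at each step, and the rest is routine algebra.
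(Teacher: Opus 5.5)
Your proposal is correct and follows essentially the same route as the paper: iterate \cref{prop:fista_new_potential_decrease}, multiply the step at $q=p+i$ by $q(q-1)/(q+1)=\rho_{q+1}$, so that $\rho_{q+1}\theta_q=\rho_q$ and $\rho_{q+1}(1-\theta_q)=1-\tfrac2q+\tfrac2{q+1}$, and then telescope over $i=0,\dots,n-1$. Your explicit checks that $S_f$ maps $\mathcal X^2$ into itself and that $q\geq5$ for every step are exactly the conditions needed to apply the proposition at each iterate.
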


\begin{proof}
    Since the case $n=0$ is trivial, we assume $n\geq1$.
    \cref{prop:fista_new_potential_decrease} implies
    \begin{align}
        \Phi^{(i+1)}
        &\leq\frac{(p+i+1)(p+i-2)}{(p+i)^2}\Phi^{(i)}+30LD^2\frac{p+i+2}{(p+i)^2}
    \end{align}
    for all $i\geq0$.
    Multiplying both sides by $(p+i)(p+i-1)/(p+i+1)$ gives
    \begin{align}
        \frac{(p+i)(p+i-1)}{p+i+1}\Phi^{(i+1)}
        &\leq\frac{(p+i-1)(p+i-2)}{p+i}\Phi^{(i)}+30LD^2\frac{(p+i+2)(p+i-1)}{(p+i)(p+i+1)}\\
        &=\frac{(p+i-1)(p+i-2)}{p+i}\Phi^{(i)}+30LD^2\paren*{1-\frac2{p+i}+\frac2{p+i+1}}.
    \end{align}
    Summing both sides from $i=0$ to $i=n-1$ gives
    \begin{align}
        \frac{(p+n-1)(p+n-2)}{p+n}\Phi^{(n)}
        &\leq\frac{(p-1)(p-2)}{p}\Phi^{(0)}+30LD^2\paren*{n-\frac2p+\frac2{p+n}},
    \end{align}
    which completes the proof.
\end{proof}

Finally, we show a uniform bound on the potential $\Phi_{F_k,4+n}(x_{k},z_{k},u)$ for all $k\geq1$.

\begin{lemma}\label{lem:uniform_potential_bound}
    Suppose \cref{asm:problem} holds.
    Let $x_0,u\in\mathcal X$, $z_0\coloneqq x_0$, and $n\geq1$, and let $(x_k,z_k)_{k\geq1}\subset \mathcal X^2$ be a sequence generated by \cref{alg:tvfista_new}.
    Then, we have 
    \begin{align}
        \rho_{4+n}\Phi_{F_k,4+n}(x_{k},z_{k},u)
        &\leq30LD^2\paren*{n+1+\frac2{4+n}}+120\delta+2\delta \rho_{4+n}(4+n)^2
    \end{align}
    for all $k\geq1$, where $\rho_i\coloneqq(i-1)(i-2)/i$.
\end{lemma}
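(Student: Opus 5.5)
The plan is to pass through the "after one step" pair of the previous round, where \cref{lem:uniform_bound} controls the function value gap. From there, \cref{lem:fista_new_potential_decrease_iterative} propagates the potential over the remaining $n-1$ steps of the round. Finally, \cref{asm:t_lipschitz} switches the objective from $F_{k-1}$ to $F_k$.

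Fix $k\geq1$ and use the notation of \cref{lem:uniform_bound}: $x_0^+=z_0^+=T_{f_0,g}(x_0)$ and $(x_{k-1}^+,z_{k-1}^+)=S_{f_{k-1}}(x_{k-1},z_{k-1})$ for $k\geq2$. By \cref{alg:tvfista_new}, in both cases $(x_k,z_k)=S_{f_{k-1}}^{n-1}(x_{k-1}^+,z_{k-1}^+)$. For $k=1$ this is line 2 of the algorithm; for $k\geq2$ it holds because the $n$ steps of round $k-1$ split as one step followed by $n-1$ steps.

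\textbf{Step 1: propagate the potential within round $k-1$.} Apply \cref{lem:fista_new_potential_decrease_iterative} with $f=f_{k-1}$, $p=5$, starting pair $(x_{k-1}^+,z_{k-1}^+)$, and $n-1\geq0$ iterations. The case $n=1$ is the trivial case of that lemma. This gives
\begin{align}
\rho_{4+n}\Phi_{F_{k-1},4+n}(x_k,z_k,u)\leq\rho_5\Phi_{F_{k-1},5}(x_{k-1}^+,z_{k-1}^+,u)+30LD^2\paren*{n-1-\frac25+\frac2{4+n}}.
\end{align}

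\textbf{Step 2: bound the initial potential.} We have $\rho_5=12/5$ and
\begin{align}
\Phi_{F_{k-1},5}(x,z,u)=25\Delta_{F_{k-1}}(x,u)+\frac L2\norm{(x-u)+2(z-u)}^2 .
\end{align}
Since $x,z,u\in\mathcal X$, \cref{asm:boundedness} bounds the norm term by $\tfrac92LD^2$. \cref{lem:uniform_bound} gives $\Delta_{F_{k-1}}(x_{k-1}^+,u)\leq LD^2+2\delta$. Hence
\begin{align}
\rho_5\Phi_{F_{k-1},5}(x_{k-1}^+,z_{k-1}^+,u)\leq\frac{12}5\paren*{\frac{59}2LD^2+50\delta}=70.8\,LD^2+120\delta.
\end{align}
Note $70.8\,LD^2\leq72\,LD^2=30LD^2\cdot\tfrac{12}5$. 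Adding this to the constant from Step 1 turns $n-1-\tfrac25$ into $n+1$, so
\begin{align}
\rho_{4+n}\Phi_{F_{k-1},4+n}(x_k,z_k,u)\leq30LD^2\paren*{n+1+\frac2{4+n}}+120\delta .
\end{align}

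\textbf{Step 3: change the objective.} The quadratic part of $\Phi$ does not depend on $F$. By \cref{asm:t_lipschitz},
\begin{align}
\Delta_{F_k}(x_k,u)\leq\Delta_{F_{k-1}}(x_k,u)+2\delta ,
\end{align}
so $\Phi_{F_k,4+n}(x_k,z_k,u)\leq\Phi_{F_{k-1},4+n}(x_k,z_k,u)+2\delta(4+n)^2$. Multiplying by $\rho_{4+n}\geq0$ and combining with Step 2 yields the claimed bound.

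The only delicate point is the bookkeeping in Step 2. One must check that the constant $\tfrac{12}5\cdot\tfrac{59}2$ fits under the slack $30\cdot\tfrac{12}5$ left by Step 1. One must also check that the $k=1$ case (the PG initialization, with $z_0^+=x_0^+$) is covered by the $k=0$ case of \cref{lem:uniform_bound}, which it is.
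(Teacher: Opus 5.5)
Your proof is correct and follows the paper's argument essentially line by line: \cref{lem:fista_new_potential_decrease_iterative} with $p=5$ started from the post-first-step pair $(x_{k-1}^+,z_{k-1}^+)$, then the bound on $\Phi_{F_{k-1},5}(x_{k-1}^+,z_{k-1}^+,u)$ from \cref{lem:uniform_bound} and \cref{asm:boundedness}, and finally \cref{asm:t_lipschitz} to switch from $F_{k-1}$ to $F_k$. The differences are cosmetic: you shift the round index by one and keep the sharper intermediate constant $\tfrac{59}{2}LD^2$ before rounding up, whereas the paper bounds the potential directly by $30LD^2+50\delta$.
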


\begin{proof}
    Let $x_0^+\coloneqq z_0^+\coloneqq T_{f_0,g}(x_0)$ and $(x_{k}^+,z_{k}^+)\coloneqq S_{f_k}(x_k,z_k)$ for all $k\geq1$.
    Since $(x_{k+1},z_{k+1})=S_{f_k}^{n-1}(x_{k}^+,z_{k}^+)$, \cref{lem:fista_new_potential_decrease_iterative} with $p=5$ implies
    \begin{align}
        \rho_{4+n}\Phi_{F_k,4+n}(x_{k+1},z_{k+1},u)
        &\leq\rho_5\Phi_{F_k,5}(x_k^+,z_k^+,u)+30LD^2\paren*{n-1-\frac25+\frac2{4+n}}\\
        &=\frac{12}5\Phi_{F_k,5}(x_k^+,z_k^+,u)+30LD^2\paren*{n-\frac75+\frac2{4+n}}
    \end{align}
    for all $k\geq0$.
    From \cref{lem:uniform_bound}, we have
    \begin{align}
        \Phi_{F_k,5}(x_k^+,z_k^+,u)
        &\leq25(LD^2+2\delta)+\frac L2\norm{3(x_k^+-u)+2(z_k^+-x_k^+)}^2\\
        &=25LD^2+50\delta+\frac L2\norm{(x_k^+-u)+2(z_k^+-u)}^2\\
        &\leq30LD^2+50\delta
    \end{align}
    for all $k\geq0$, where the second inequality follows from \cref{asm:boundedness}.
    From \cref{asm:t_lipschitz}, we have
    \begin{align}
        \Phi_{F_{k+1},4+n}(x_{k+1},z_{k+1},u)
        &\leq\Phi_{F_k,4+n}(x_{k+1},z_{k+1},u)+2\delta (4+n)^2
    \end{align}
    for all $k\geq0$.
    Combining the above three inequalities gives 
    \begin{align}
        \rho_{4+n}\Phi_{F_{k+1},4+n}(x_{k+1},z_{k+1},u)
        &\leq\frac{12}5\paren*{30LD^2+50\delta}+30LD^2\paren*{n-\frac75+\frac2{4+n}}+2\delta\rho_{4+n} (4+n)^2\\
        &=30LD^2\paren*{n+1+\frac2{4+n}}+120\delta+2\delta \rho_{4+n}(4+n)^2
    \end{align}
    for all $k\geq0$, which completes the proof.
\end{proof}

\subsection{Proof of \cref{thm:tvfista_new_error}}\label{sec:tvfista_new_error_proof}
\begin{proof}
    Fix $u\in\mathcal X$ arbitrarily.
    \cref{lem:fista_new_potential_decrease_iterative} implies
    \begin{align}
        \rho_{p+n}\Phi_{F_k,p+n}(x_{k+1},z_{k+1},u)
        &\leq\rho_p\Phi_{F_k,p}(x_k,z_k,u)+30LD^2\paren*{n-\frac2p+\frac2{p+n}}
    \end{align}
    for $k\geq1$, where $\rho_i\coloneqq(i-1)(i-2)/i$.
    From \cref{asm:t_lipschitz}, we have
    \begin{align}
        \Phi_{F_{k+1},p+n}(x_{k+1},z_{k+1},u)
        &\leq\Phi_{F_k,p+n}(x_{k+1},z_{k+1},u)+2\delta (p+n)^2.
    \end{align}
    Combining the above two inequalities gives
    \begin{align}
        \rho_{p+n}\Phi_{F_{k+1},p+n}(x_{k+1},z_{k+1},u)
        &\leq\rho_p\Phi_{F_k,p}(x_k,z_k,u)+30LD^2\paren*{n-\frac2p+\frac2{p+n}}+2\delta\rho_{p+n}(p+n)^2.
    \end{align}
    In particular, by taking $k\gets k-m+i$ and $p=4+ni$, we have
    \begin{align}
        &\rho_{4+n(i+1)}\Phi_{k-m+i+1,i+1}\\
        &\leq\rho_{4+ni}\Phi_{k-m+i,i}+30LD^2\paren*{n-\frac2{4+ni}+\frac2{4+n(i+1)}}+2\delta\rho_{4+n(i+1)}(4+n(i+1))^2
    \end{align}
    for all $1\leq i< m\leq k$, where $\Phi_{k,i}\coloneqq\Phi_{F_k,4+ni}(x_k,z_k,u)$.
    Summing both sides from $i=1$ to $i=m-1$ gives
    \begin{align}
        \rho_{4+nm}\Phi_{k,m}
        &\leq\rho_{4+n}\Phi_{k-m+1,1}+30LD^2\paren*{n(m-1)-\frac2{4+n}+\frac2{4+nm}}+2\delta\sum_{i=2}^{m}\rho_{4+ni}(4+ni)^2
    \end{align}
    for all $2\leq m\leq k$.
    The same inequality holds trivially for $m=1$.
    To bound the potential term on the right-hand side, we use the identity $\Phi_{k-m+1,1}=\Phi_{F_{k-m+1},4+n}(x_{k-m+1},z_{k-m+1},u)$.
    Applying \cref{lem:uniform_potential_bound} with $k\gets k-m+1\geq1$ then gives
    \begin{align}
        \rho_{4+n}\Phi_{k-m+1,1}
        &\leq30LD^2\paren*{n+1+\frac2{4+n}}+120\delta+2\delta \rho_{4+n}(4+n)^2
    \end{align}
    for all $1\leq m\leq k$. 
    Combining the above two inequalities gives
    \begin{align}
        \rho_{4+nm}\Phi_{k,m}
        &\leq30LD^2\paren*{nm+1+\frac2{4+nm}}+2\delta\sum_{i=1}^{m}\rho_{4+ni}(4+ni)^2+120\delta\\
        &\leq30(nm+2)LD^2+2\delta\sum_{i=1}^{m}\rho_{4+ni}(4+ni)^2+120\delta
    \end{align}
    for all $1\leq m\leq k$.
    Let $(i)_j\coloneqq i(i+1)\cdots(i+j-1)$ for $i,j\geq1$.
    Since $\Phi_{k,m}\geq(4+nm)^2\Delta_{F_k}(x_k,u)$ and $\rho_{4+nm}(4+nm)^2=(nm+2)_3$, we obtain
    \begin{align}
        \Delta_{F_k}(x_k,u)
        &\leq\frac1{(nm+2)_3}\paren*{30(nm+2)LD^2+2\delta\sum_{i=1}^{m}(ni+2)_3+120\delta}\\
        &\leq\frac{30LD^2}{n^2m^2}+2\delta\sum_{i=1}^{m}\frac{(ni+2)_3}{(nm+2)_3}+\frac{120\delta}{(nm+2)_3}\label{eq:tvfista_new_error_1}
    \end{align}
    for all $1\leq m\leq k$. 

    Since
    \begin{align}
        \frac{ni+\ell}{nm+\ell}
        &\leq\frac{i+\ell}{m+\ell}
    \end{align}
    for all $n\geq1$, $1\leq i\leq m$, and $\ell\geq0$, we have $(ni+2)_3/(nm+2)_3\leq(i+2)_3/(m+2)_3$.
    Thus, we have
    \begin{align}
        \sum_{i=1}^{m}\frac{(ni+2)_3}{(nm+2)_3}
        &\leq\sum_{i=1}^{m}\frac{(i+2)_3}{(m+2)_3}
        =\frac{(m+2)_4-120}{4(m+2)_3}
        =\frac{m+5}4-\frac{30}{(m+2)_3},
    \end{align}
    and hence
    \begin{align}
        2\delta\sum_{i=1}^{m}\frac{(ni+2)_3}{(nm+2)_3}+\frac{120\delta}{(nm+2)_3}
        &\leq\paren*{\frac{m+5}2-\frac{60}{(m+2)_3}+\frac{120}{(nm+2)_3}}\delta
        \leq\paren*{\frac{m}2+4}\delta,\label{eq:tvfista_new_error_2}
    \end{align}
    where we used $(nm+2)_3\geq(m+2)_3\geq60$ in the last inequality.
    Combining \cref{eq:tvfista_new_error_1,eq:tvfista_new_error_2} gives
    \begin{align}
        \Delta_{F_k}(x_k,u)
        &\leq\frac{30LD^2}{n^2m^2}+\paren*{\frac m2+4}\delta
    \end{align}
    for all $1\leq m\leq k$.
    By taking $m\coloneqq\min\set{k,\ceil{4\sqrt[3]{LD^2/(n^2\delta)}}}$, we obtain
    \begin{align}
        \Delta_{F_k}(x_k,u)
        &\leq\frac{30LD^2}{n^2}\max\set*{\frac1{k^2},\frac1{16}\paren*{\frac{n^2\delta}{LD^2}}^{2/3}}+\paren*{2\paren*{\frac{LD^2}{n^2\delta}}^{1/3}+5}\delta\\
        &\leq\frac{30LD^2}{n^2k^2}+4\sqrt[3]{\frac{LD^2\delta^2}{n^2}}+5\delta
    \end{align}
    for all $k\geq1$.
    Taking $u\in \mathcal X_{F_k}^*$ completes the proof.
\end{proof}

%% file: 93_appendix_tvapge.tex
\section{DETAILS FOR SECTION~\ref{sec:tvapge}}\label{sec:tvapge_details}
\subsection{Estimate of Temporal Variability}\label{sec:delta_estimation}
Fix an arbitrary reference point $\bar x\in\mathcal X$, and let $\delta_0>0$ be an initial estimate of $\delta$.
We assume $\delta_0\leq\delta$.
Such an estimate can be obtained by evaluating $|\Delta_{f_0}(x,\bar x)-\Delta_{f_{-1}}(x,\bar x)|/2$ at randomly chosen points, because each value is bounded above by $\delta$.
Here, $f_{-1}$ denotes the objective function immediately preceding $f_0$.
We assume \cref{asm:t_lipschitz} also holds for $k=-1$.
If $f_{-1}$ is unavailable in practice, we can initialize the estimate at $\delta_0=0$.
The initial estimate can also be zero when $f_{-1}$ is available but $\Delta_{f_0}(x,\bar x)=\Delta_{f_{-1}}(x,\bar x)$ at all sampled points.
Whenever $\delta_0=0$, we start with $\gamma_0=0$ and apply \cref{alg:fista} according to $(x_{k+1},z_{k+1},\gamma_{k+1})=\apg_1(f_k,x_k,z_k,\gamma_k,n,L)$.
We continue until a nonzero difference is detected and then switch to TV-APGe.
The estimate $\delta_k$ of $\delta$ at round $k$ is defined as
\begin{align}
    \delta_k\coloneqq\max\set*{\delta_0,\max_{1\leq i\leq k}\frac{\abs{\Delta_{f_i}(x_i,\bar x)-\Delta_{f_{i-1}}(x_{i},\bar x)}}2}\in[\delta_0,\delta].\label{eq:delta_estimation_v2}
\end{align}
The estimates $(\delta_k)_{k\geq0}$ form a nondecreasing sequence bounded above by $\delta$, so they converge.
We denote their limit by $\delta_\infty\in[\delta_0,\delta]$.
TV-APGe is summarized in \cref{alg:tvapge_full}.

In the experiments, we use $\bar x=0$.
We compute $\delta_0$ as the maximum of $|\Delta_{f_0}(x,0)-\Delta_{f_{-1}}(x,0)|/2$ over 10 randomly sampled points $x\in\mathcal X$ and the initial iterate $x_0$.
For the quartic experiment, we obtain $f_{-1}$ by setting $k=-1$ in the definition of $f_k$.
For the autoregression problems, we use $(f_{886},f_{887})$ in place of $(f_{-1},f_0)$.

\begin{algorithm}[t]
  \caption{TV-APGe}
  \label{alg:tvapge_full}
  \begin{algorithmic}[1]
    \Require $x_0,\bar x\in\mathcal X$, $n\geq1$, $L,D>0$, $\delta_{0}\in(0,\delta]$
    \State $z_0\gets x_0$
    \For{$k=0,1,\ldots$}
        \State $\gamma_{k}\gets\sqrt[3]{nLD^2/\delta_k}$
        \State $(x_{k+1},z_{k+1},\underline{\hspace{0.8em}})\gets\apg_1(f_k,x_k,z_k,\gamma_k,n,L)$
        \State $\delta_{k+1}\gets\max\{\delta_{k},\abs{\Delta_{f_{k+1}}(x_{k+1},\bar x)-\Delta_{f_{k}}(x_{k+1},\bar x)}/2\}$
    \EndFor
  \end{algorithmic}
\end{algorithm}

\subsection{Upper Bound on the Error of TV-APGe}
The theoretical analysis of TV-APGe is also based on the potential function.
Let $\tilde\Phi_k\coloneqq\tilde\Phi_{F_k,\gamma_k}(x_k,z_k,u)$ for arbitrarily fixed $u\in\mathcal X$.
\cref{prop:fista_potential_decrease} gives the following recursion on $\tilde\Phi_k$:
\begin{lemma}\label{lem:tvfista_general_recursion_v2}
    Suppose \cref{asm:problem} holds.
    Then, \cref{alg:tvapge_full} guarantees
    \begin{align}
        \frac1{\gamma_{k+1}^2}\paren*{\tilde\Phi_{k+1}-\frac{LD^2}2}
        &\leq\frac{\theta_k^2}{\gamma_k^2}\paren*{\tilde\Phi_k-\frac{LD^2}2}+2\delta\label{eq:tvfista_general_recursion_v2}
    \end{align}
    for all $k\geq0$, where $\theta_k\coloneqq\gamma_{k}/\phi^n(\gamma_k)$.
\end{lemma}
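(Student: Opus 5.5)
The plan is to unfold both sides of \cref{eq:tvfista_general_recursion_v2} in terms of function value gaps and distances. By definition $\tilde\Phi_k=\gamma_k^2\Delta_{F_k}(x_k,u)+\frac L2\norm{z_k-u}^2$, so
\begin{align}
    \frac1{\gamma_{k+1}^2}\paren*{\tilde\Phi_{k+1}-\frac{LD^2}2}
    =\Delta_{F_{k+1}}(x_{k+1},u)+\frac{L}{2\gamma_{k+1}^2}\paren*{\norm{z_{k+1}-u}^2-D^2}.
\end{align}
On the right-hand side, $\theta_k^2/\gamma_k^2=1/(\phi^n(\gamma_k))^2$. The target therefore becomes
\begin{align}
    \frac{\tilde\Phi_k-LD^2/2}{(\phi^n(\gamma_k))^2}+2\delta.
\end{align}
Throughout, $\gamma_k>0$, since $\delta_k\in(0,\delta]$.

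First, I handle the change of the weight from $\gamma_{k+1}$ to $\phi^n(\gamma_k)$. The estimates $\delta_k$ are nondecreasing, so $\gamma_{k+1}=\sqrt[3]{nLD^2/\delta_{k+1}}\leq\gamma_k$. Moreover, $\phi(\gamma)\geq\gamma$, so $\phi^n(\gamma_k)\geq\gamma_k\geq\gamma_{k+1}$. Since $z_{k+1},u\in\mathcal X$, \cref{asm:boundedness} gives $\norm{z_{k+1}-u}^2-D^2\leq0$. The coefficient of this nonpositive quantity is $L/(2\gamma_{k+1}^2)$, and it is at least $L/(2(\phi^n(\gamma_k))^2)$. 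Replacing the coefficient by the smaller one can therefore only increase the expression, which yields
\begin{align}
    \frac{\tilde\Phi_{k+1}-LD^2/2}{\gamma_{k+1}^2}
    \leq\Delta_{F_{k+1}}(x_{k+1},u)+\frac{L\paren{\norm{z_{k+1}-u}^2-D^2}}{2(\phi^n(\gamma_k))^2}.
\end{align}
This sign and monotonicity argument is the only genuinely delicate point. It is the reason the constant $LD^2/2$ is subtracted in the statement and the reason the estimates must be nondecreasing. I expect it to be the main obstacle, and the remaining steps to be routine.

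Next, I replace $F_{k+1}$ by $F_k$. Applied with $x=x_{k+1}$ and $y=u$, \cref{asm:t_lipschitz} gives $\Delta_{F_{k+1}}(x_{k+1},u)\leq\Delta_{F_k}(x_{k+1},u)+2\delta$, because the $g$ parts cancel. The first two terms of the resulting bound are then exactly
\begin{align}
    \paren*{\tilde\Phi_{F_k,\phi^n(\gamma_k)}(x_{k+1},z_{k+1},u)-\frac{LD^2}2}\Big/(\phi^n(\gamma_k))^2.
\end{align}

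Finally, $(x_{k+1},z_{k+1})$ is the output of $\apg_1(f_k,x_k,z_k,\gamma_k,n,L)$. Hence \cref{prop:fista_potential_decrease} gives $\tilde\Phi_{F_k,\phi^n(\gamma_k)}(x_{k+1},z_{k+1},u)\leq\tilde\Phi_{F_k,\gamma_k}(x_k,z_k,u)=\tilde\Phi_k$. Substituting this and rewriting $1/(\phi^n(\gamma_k))^2=\theta_k^2/\gamma_k^2$ gives \cref{eq:tvfista_general_recursion_v2} for every $k\geq0$.
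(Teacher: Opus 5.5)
Your proof is correct and follows the paper's argument: the same three ingredients, namely \cref{asm:t_lipschitz} applied to $\Delta_{F_{k+1}}(x_{k+1},u)$, \cref{prop:fista_potential_decrease} at weight $\phi^n(\gamma_k)$, and the sign argument based on $\|z_{k+1}-u\|\leq D$ and $\gamma_{k+1}\leq\gamma_k\leq\phi^n(\gamma_k)$. The only difference is the order: you replace the coefficient $L/(2\gamma_{k+1}^2)$ first, whereas the paper applies the assumption and the potential decrease first and then drops the nonpositive term $-\frac L2\bigl(\gamma_{k+1}^{-2}-(\phi^n(\gamma_k))^{-2}\bigr)\bigl(D^2-\|z_{k+1}-u\|^2\bigr)$ at the end.
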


\begin{proof}
    From \cref{asm:t_lipschitz}, we have
    \begin{align}
        \frac{\tilde\Phi_{k+1}}{\gamma_{k+1}^2}
        &=\Delta_{F_{k+1}}(x_{k+1},u)+\frac L{2\gamma_{k+1}^2}\norm{z_{k+1}-u}^2\\
        &\leq\Delta_{F_{k}}(x_{k+1},u)+\frac L{2\gamma_{k+1}^2}\norm{z_{k+1}-u}^2+2\delta\\
        &=\frac1{(\phi^n(\gamma_k))^2}\paren*{(\phi^n(\gamma_k))^2\Delta_{F_k}(x_{k+1},u)+\frac L2\norm{z_{k+1}-u}^2}\\&\quad+\frac L2\paren*{\frac1{\gamma_{k+1}^2}-\frac1{(\phi^n(\gamma_k))^2}}\norm{z_{k+1}-u}^2+2\delta\\
        &\leq\theta_k^2\frac{\tilde\Phi_k}{\gamma_k^2}+\frac L2\paren*{\frac1{\gamma_{k+1}^2}-\frac1{(\phi^n(\gamma_k))^2}}\norm{z_{k+1}-u}^2+2\delta,
    \end{align}
    where the last inequality follows from \cref{prop:fista_potential_decrease}.
    By subtracting $LD^2/(2\gamma_{k+1}^2)$ from both sides, we obtain
    \begin{align}
        \frac1{\gamma_{k+1}^2}\paren*{\tilde\Phi_{k+1}-\frac{LD^2}2}
        &\leq\frac{\theta_k^2}{\gamma_k^2}\paren*{\tilde\Phi_k-\frac{LD^2}2}-\frac L2\paren*{\frac1{\gamma_{k+1}^2}-\frac1{(\phi^n(\gamma_k))^2}}\paren*{D^2-\norm{z_{k+1}-u}^2}+2\delta.
    \end{align}
    The second term on the right-hand side is nonpositive because $\norm{z_{k+1}-u}\leq D$ and $\gamma_{k+1}\leq\gamma_k\leq\phi^n(\gamma_k)$.
    Here, $\gamma_{k+1}\leq\gamma_k$ follows from $\delta_{k+1}\geq\delta_k$.
    Dropping this nonpositive term gives the desired inequality.
\end{proof}

Before presenting the main theorem, we give the following lemma.

\begin{lemma}\label{lem:upper_bound_of_product}
    Let $(a_k)_{k\geq0}$ be a non-decreasing sequence of positive numbers converging to $a_\infty\coloneqq\lim_{k\to\infty}a_k$.
    Then, we have
    \begin{align}
        \limsup_{k\to\infty}\sum_{i=0}^{k-1}\prod_{j=i+1}^{k-1}\paren*{1+a_j}^{-2}
        &\leq1+\frac1{2a_\infty}.
    \end{align}
\end{lemma}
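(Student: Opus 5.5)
We need to bound $\limsup_k S_k$, where $S_k\coloneqq\sum_{i=0}^{k-1}P_{i,k}$ and $P_{i,k}\coloneqq\prod_{j=i+1}^{k-1}(1+a_j)^{-2}$ (with the empty product equal to $1$ for $i=k-1$). The plan is to write $S_k$ via a one-step recursion, compare it with a geometric series, and then remove the dependence on the early terms of $(a_k)$.

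\textbf{Recursion.} Set $r_j\coloneqq(1+a_j)^{-2}\in(0,1)$. Peeling off the factor at $j=k$ gives
\begin{align}
    S_{k+1}=1+r_kS_k .
\end{align}
Since $(a_k)$ is nondecreasing, $r_j$ is nonincreasing and converges to $r_\infty\coloneqq(1+a_\infty)^{-2}<1$.

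\textbf{Limit of the recursion.} Fix $\varepsilon>0$ and choose $K$ such that $r_j\leq r_\infty+\varepsilon<1$ for $j\geq K$. In fact we may take $\varepsilon=0$, because monotonicity gives $r_j\geq r_\infty$ but the relevant bound goes the other way: $a_j\leq a_\infty$ means $r_j\geq r_\infty$. So the tail must be handled by a limit argument, not monotonicity. For $k\geq K$ we have $S_{k+1}\leq1+(r_\infty+\varepsilon)S_k$, and iterating,
\begin{align}
    S_k\leq\frac1{1-r_\infty-\varepsilon}+(r_\infty+\varepsilon)^{k-K}S_K .
\end{align}
Since $S_K$ is a fixed finite number, letting $k\to\infty$ and then $\varepsilon\to0$ yields
\begin{align}
    \limsup_{k\to\infty}S_k\leq\frac1{1-r_\infty}=\frac{(1+a_\infty)^2}{(1+a_\infty)^2-1}=\frac{(1+a_\infty)^2}{a_\infty(2+a_\infty)}.
\end{align}

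\textbf{Elementary inequality.} It remains to show $\frac{(1+a)^2}{a(2+a)}\leq1+\frac1{2a}$ for $a>0$. Multiplying both sides by $2a(2+a)>0$, the claim becomes
\begin{align}
    2(1+a)^2\leq(2a+1)(2+a).
\end{align}
The left side is $2+4a+2a^2$ and the right side is $2+5a+2a^2$, so the inequality holds since $a>0$.

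\textbf{Main obstacle.} The only delicate point is that the early terms $a_j$ may be small, so the early $r_j$ can be close to $1$. This is handled by the geometric decay $(r_\infty+\varepsilon)^{k-K}S_K\to0$, which makes the contribution of those terms vanish. The argument uses only $a_\infty>0$, which holds because the sequence is positive and nondecreasing.
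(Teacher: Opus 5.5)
Your proof is correct and is essentially the paper's argument. Unrolling $S_{k+1}=1+r_kS_k$ from $K$ reproduces exactly the paper's split: the early indices $i<K$ become your term $S_K\prod_{j=K}^{k-1}r_j$, which decays geometrically, and the remaining tail is bounded by a geometric series. You then use the same elementary inequality $(1-(1+x)^{-2})^{-1}\leq 1+\frac{1}{2x}$, which you verify explicitly.

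The only blemish is the self-contradicting remark that one ``may take $\varepsilon=0$''. Since $r_j\geq r_\infty$, that claim is false and the sentence should be deleted; your subsequent argument correctly uses $\varepsilon>0$.
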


\begin{proof}
    Let $\varepsilon\in(0,1)$ be an arbitrarily fixed small constant.
    Since $(a_k)_{k\geq0}$ converges to $a_\infty>0$, there exists $K\geq0$ such that $a_k/a_\infty\in[1-\varepsilon,1]$ for all $k\geq K$.
    Then, for all $k>K$, we have
    \begin{align}
        \sum_{i=0}^{k-1}\prod_{j=i+1}^{k-1}\paren*{1+a_j}^{-2}
        &=\sum_{i=0}^{K-1}\prod_{j=i+1}^{k-1}\paren*{1+a_j}^{-2}+\sum_{i=K}^{k-1}\prod_{j=i+1}^{k-1}\paren*{1+a_j}^{-2}.
    \end{align}
    The first term on the right-hand side converges to zero as $k\to\infty$ because
    \begin{align}
        \sum_{i=0}^{K-1}\prod_{j=i+1}^{k-1}\paren*{1+a_j}^{-2}
        &\leq\sum_{i=0}^{K-1}\prod_{j=K}^{k-1}\paren*{1+a_j}^{-2}
        \leq K\prod_{j=K}^{k-1}\paren*{1+a_K}^{-2}
        \to0.
    \end{align}
    The second term on the right-hand side can be bounded as
    \begin{align}
        \sum_{i=K}^{k-1}\prod_{j=i+1}^{k-1}\paren*{1+a_j}^{-2}
        &\leq\sum_{i=K}^{k-1}\prod_{j=i+1}^{k-1}\paren*{1+(1-\varepsilon)a_\infty}^{-2}\\
        &=\sum_{i=K}^{k-1}\paren*{1+(1-\varepsilon)a_\infty}^{-2(k-i-1)}
        =\sum_{i=0}^{k-K-1}\paren*{1+(1-\varepsilon)a_\infty}^{-2i}\\
        &\leq\paren*{1-(1+(1-\varepsilon)a_\infty)^{-2}}^{-1}\\
        &\leq1+\frac1{2(1-\varepsilon)a_\infty},
    \end{align}
    where the last inequality follows from $(1-(1+x)^{-2})^{-1}\leq1+1/(2x)$ for all $x>0$.
    Combining these inequalities guarantees that
    \begin{align}
        \limsup_{k\to\infty}\sum_{i=0}^{k-1}\prod_{j=i+1}^{k-1}\paren*{1+a_j}^{-2}
        &\leq1+\frac1{2(1-\varepsilon)a_\infty}
    \end{align}
    for all $\varepsilon\in(0,1)$, which implies the desired inequality.
\end{proof}

We are now ready to present the main theorem, which provides an upper bound on the asymptotic function value gap of \cref{alg:tvapge_full}.
The following theorem gives the full version of \cref{thm:tvfista_final_error_short} stated in the main text.

\begin{theorem}\label{thm:tvfista_final_error}
    Suppose \cref{asm:problem} holds.
    Then, \cref{alg:tvapge_full} guarantees that
    \begin{align}
        \Delta_k
        &\leq\frac12\sqrt[3]{\frac{LD^2\delta_k^2}{n^2}}+\Delta_0\prod_{i=0}^{k-1}\paren*{1+\frac 1{2}\sqrt[3]{\frac{n^2\delta_i}{LD^2}}}^{-2}+2\delta\sum_{i=0}^{k-1}\prod_{j=i+1}^{k-1}\paren*{1+\frac12\sqrt[3]{\frac{n^2\delta_j}{LD^2}}}^{-2}\label{eq:tvfista_final_error}
    \end{align}
    for all $k\geq0$, and therefore
    \begin{align}
        \limsup_{k\to\infty}\Delta_k
        &\leq\paren*{\frac12+2\sqrt[3]{\frac{\delta}{\delta_\infty}}}\sqrt[3]{\frac{LD^2\delta^2}{n^2}}+2\delta.\label{eq:tvfista_final_error_asymptotic}
    \end{align}
\end{theorem}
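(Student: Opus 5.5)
The plan is to unroll the recursion of \cref{lem:tvfista_general_recursion_v2} and then lower-bound the contraction factor $\theta_k^{-1}=\phi^n(\gamma_k)/\gamma_k$.

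Set $E_k\coloneqq\gamma_k^{-2}(\tilde\Phi_k-LD^2/2)$. Since $\tilde\Phi_k=\gamma_k^2\Delta_{F_k}(x_k,u)+\frac L2\norm{z_k-u}^2$ and $\norm{z_k-u}\leq D$, we have
\begin{align}
\Delta_{F_k}(x_k,u)-\frac{LD^2}{2\gamma_k^2}\leq E_k\leq\Delta_{F_k}(x_k,u).
\end{align}
Iterating \cref{eq:tvfista_general_recursion_v2} gives
\begin{align}
E_k\leq\Big(\prod_{i=0}^{k-1}\theta_i^2\Big)E_0+2\delta\sum_{i=0}^{k-1}\prod_{j=i+1}^{k-1}\theta_j^2.
\end{align}
One subtlety: if $E_0<0$, multiplying by smaller products is still an upper bound only if we pass to $\max\{E_0,0\}$. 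So I bound $E_0\leq\Delta_{F_0}(x_0,u)\leq\Delta_0$, which is nonnegative, and use $\max\{E_i,0\}$ throughout. This is legitimate because the recursion with nonnegative right-hand side terms preserves the upper bound.

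Next I need $\theta_i\leq\bigl(1+\tfrac12\sqrt[3]{n^2\delta_i/(LD^2)}\bigr)^{-1}$. Since $\phi(\gamma)\geq\gamma+\tfrac12$, we have $\phi^n(\gamma_i)\geq\gamma_i+n/2$, so
\begin{align}
\theta_i^{-1}\geq1+\frac{n}{2\gamma_i}=1+\frac12\cdot\frac{n}{\sqrt[3]{nLD^2/\delta_i}}=1+\frac12\sqrt[3]{\frac{n^2\delta_i}{LD^2}}.
\end{align}
Substituting this into the unrolled bound and adding back $LD^2/(2\gamma_k^2)=\tfrac12\sqrt[3]{LD^2\delta_k^2/n^2}$ yields \cref{eq:tvfista_final_error} after taking $u\in\mathcal X^*_{F_k}$. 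Note that $u$ is fixed across rounds, so I fix $u\in\mathcal X^*_{F_k}$ for the target $k$ before unrolling. The bound $\Delta_{F_0}(x_0,u)\leq\Delta_0$ holds for any $u$, so this causes no trouble.

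For the asymptotic bound, set $a_j\coloneqq\tfrac12\sqrt[3]{n^2\delta_j/(LD^2)}$. This sequence is nondecreasing, positive, and converges to $a_\infty=\tfrac12\sqrt[3]{n^2\delta_\infty/(LD^2)}$. The product multiplying $\Delta_0$ is at most $(1+a_0)^{-2k}\to0$, and the first term is at most $\tfrac12\sqrt[3]{LD^2\delta^2/n^2}$ since $\delta_k\leq\delta$. By \cref{lem:upper_bound_of_product}, the sum term has limsup at most
\begin{align}
2\delta\Big(1+\frac1{2a_\infty}\Big)=2\delta+2\delta\sqrt[3]{\frac{LD^2}{n^2\delta_\infty}}=2\delta+2\sqrt[3]{\frac{\delta}{\delta_\infty}}\sqrt[3]{\frac{LD^2\delta^2}{n^2}}.
\end{align}
Summing the three contributions gives \cref{eq:tvfista_final_error_asymptotic}.

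The main obstacle is the sign issue with $E_k$ when chaining the inequalities. The recursion must be applied to the nonnegative quantity $\max\{E_k,0\}$, which still satisfies the same inequality because $\theta_k^2\geq0$ and $2\delta\geq0$. The $k=0$ case is immediate, since $\Delta_0\leq\Delta_0$ and the first term is nonnegative.
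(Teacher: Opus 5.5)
Your proposal is correct and follows the paper's proof essentially step for step: you unroll the recursion of \cref{lem:tvfista_general_recursion_v2}, use $E_0\leq\Delta_0$, bound $\theta_i\leq(1+n/(2\gamma_i))^{-1}$ via $\phi(\gamma)\geq\gamma+\tfrac12$, and finish with \cref{lem:upper_bound_of_product} and $\delta_k\leq\delta$. The $\max\{E_k,0\}$ device is valid but unnecessary, because unrolling with the nonnegative coefficients $\theta_k^2$ works for either sign of $E_k$, and replacing $E_0$ by $\Delta_0\geq0$ before substituting the upper bounds on $\theta_i$ (the order the paper uses) already removes the sign issue.
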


\begin{proof}
    Solving the recursion~\cref{eq:tvfista_general_recursion_v2} gives
    \begin{align}
        \frac1{\gamma_k^2}\paren*{\tilde\Phi_k-\frac{LD^2}2}
        &\leq\frac1{\gamma_0^2}\paren*{\tilde\Phi_0-\frac{LD^2}2}\prod_{i=0}^{k-1}\theta_i^2+2\delta\sum_{i=0}^{k-1}\prod_{j=i+1}^{k-1}\theta_j^2.
    \end{align}
    Since $\tilde\Phi_k\geq\gamma_k^2\Delta_{F_k}(x_k,u)$ and $\tilde\Phi_0\leq\gamma_0^2\Delta_0+LD^2/2$, we have
    \begin{align}
        \Delta_{F_k}(x_k,u)
        &\leq\frac{LD^2}{2\gamma_k^2}+\Delta_0\prod_{i=0}^{k-1}\theta_i^2+2\delta\sum_{i=0}^{k-1}\prod_{j=i+1}^{k-1}\theta_j^2.
    \end{align}
    Since $\phi(\gamma)\geq\gamma+1/2$ for all $\gamma\geq0$, we have 
    \begin{align}
        \theta_k
        &=\frac{\gamma_k}{\phi^n(\gamma_k)}
        \leq\paren*{1+\frac n{2\gamma_k}}^{-1}.
    \end{align}
    Using this inequality, substituting $\gamma_k=\sqrt[3]{nLD^2/\delta_k}$, and taking $u\in\mathcal X_{F_k}^*$ gives \cref{eq:tvfista_final_error}.

    Since $\delta_k\geq\delta_0>0$ for all $k\geq0$, the product in the second term on the right-hand side of \cref{eq:tvfista_final_error} satisfies
    \begin{align}
        \lim_{k\to\infty}\prod_{i=0}^{k-1}\paren*{1+\frac 1{2}\sqrt[3]{\frac{n^2\delta_i}{LD^2}}}^{-2}
        &=0.
    \end{align}
    For the third term in the right-hand side of \cref{eq:tvfista_final_error}, we have
    \begin{align}
        \limsup_{k\to\infty}\sum_{i=0}^{k-1}\prod_{j=i+1}^{k-1}\paren*{1+\frac12\sqrt[3]{\frac{n^2\delta_j}{LD^2}}}^{-2}
        &\leq1+\sqrt[3]{\frac{LD^2}{n^2\delta_\infty}}
    \end{align}
    from \cref{lem:upper_bound_of_product}.
    Combining these inequalities and using $\delta_\infty\leq\delta$ gives \cref{eq:tvfista_final_error_asymptotic}.
\end{proof}

When we have access to $\delta$, we can set $\delta_0=\delta$ in \cref{alg:tvapge_full} and guarantee a non-asymptotic bound on $\Delta_k$ as follows.

\begin{corollary}\label{cor:tvfista_final_error_v3}
    Suppose \cref{asm:problem} holds.
    Then, for all $k\geq0$, \cref{alg:tvapge_full} with $\delta_0=\delta$ guarantees that
    \begin{align}
        \Delta_k
        &\leq\frac52\sqrt[3]{\frac{LD^2\delta^2}{n^2}}+2\delta+\paren*{1+\frac 1{2}\sqrt[3]{\frac{n^2\delta}{LD^2}}}^{-2k}\Delta_0.
    \end{align}
\end{corollary}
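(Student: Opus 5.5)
The plan is to specialize the non-asymptotic bound \cref{eq:tvfista_final_error} of \cref{thm:tvfista_final_error} to the case $\delta_0=\delta$. Since the estimates are nondecreasing and satisfy $\delta_0\leq\delta_k\leq\delta$ for all $k$, setting $\delta_0=\delta$ forces $\delta_k=\delta$ for every $k\geq0$. Consequently $\gamma_k=\sqrt[3]{nLD^2/\delta}$ is constant. All three terms of \cref{eq:tvfista_final_error} then become explicit in terms of the single quantity
\begin{align}
    q\coloneqq\paren*{1+\frac12\sqrt[3]{\frac{n^2\delta}{LD^2}}}^{-2}\in(0,1).
\end{align}

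Next I would evaluate each term of \cref{eq:tvfista_final_error} under $\delta_k\equiv\delta$.
\begin{itemize}
    \item The first term is exactly $\frac12\sqrt[3]{LD^2\delta^2/n^2}$.
    \item The second term is $\Delta_0\prod_{i=0}^{k-1}q=q^k\Delta_0$, which is precisely the last term of the corollary.
    \item The third term is $2\delta\sum_{i=0}^{k-1}q^{k-1-i}\leq 2\delta/(1-q)$, a geometric series, valid uniformly in $k$ with no $\limsup$ needed.
\end{itemize}

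It remains to bound $1/(1-q)$. I would use the elementary inequality $(1-(1+x)^{-2})^{-1}\leq1+1/(2x)$ for $x>0$, already invoked in the proof of \cref{lem:upper_bound_of_product}. With $x=\frac12\sqrt[3]{n^2\delta/(LD^2)}$ it gives
\begin{align}
    \frac1{1-q}\leq1+\sqrt[3]{\frac{LD^2}{n^2\delta}}.
\end{align}
Hence the third term is at most $2\delta+2\sqrt[3]{LD^2\delta^2/n^2}$. Adding the first term yields $\frac52\sqrt[3]{LD^2\delta^2/n^2}+2\delta+q^k\Delta_0$, which is the claim.

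There is no real obstacle here. The only points needing care are checking that $\delta_k\equiv\delta$ is legitimate (it is, because the estimates lie in $[\delta_0,\delta]$), and verifying the scalar inequality. For the latter, it is equivalent to $2x(1+x)^2\leq(2x+1)((1+x)^2-1)=(2x+1)(2x+x^2)$. Expanding, this reads $2x+4x^2+2x^3\leq 2x+5x^2+2x^3$, which is true.
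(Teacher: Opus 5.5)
Your proposal is correct and follows the paper's proof essentially step for step. You specialize \cref{eq:tvfista_final_error} to $\delta_k\equiv\delta$, bound the resulting geometric sum by $2\delta(1-q)^{-1}$, and then apply $(1-(1+x)^{-2})^{-1}\leq 1+1/(2x)$ with $x=\frac12\sqrt[3]{n^2\delta/(LD^2)}$; your explicit verification of that scalar inequality is a small addition, since the paper invokes it without proof.
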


\begin{proof}
    Since $\delta_0=\delta$ implies $\delta_k=\delta$ for all $k\geq0$, we have
    \begin{align}
        \Delta_k
        &\leq\frac12\sqrt[3]{\frac{LD^2\delta^2}{n^2}}+\Delta_0\prod_{i=0}^{k-1}\paren*{1+\frac 1{2}\sqrt[3]{\frac{n^2\delta}{LD^2}}}^{-2}+2\delta\sum_{i=0}^{k-1}\prod_{j=i+1}^{k-1}\paren*{1+\frac12\sqrt[3]{\frac{n^2\delta}{LD^2}}}^{-2}\\
        &\leq\frac12\sqrt[3]{\frac{LD^2\delta^2}{n^2}}+\paren*{1+\frac 1{2}\sqrt[3]{\frac{n^2\delta}{LD^2}}}^{-2k}\Delta_0+2\delta\sum_{i=0}^{k-1}\paren*{1+\frac12\sqrt[3]{\frac{n^2\delta}{LD^2}}}^{-2(k-i-1)}
    \end{align}
    from \cref{eq:tvfista_final_error}.
    Using the same argument as in the proof of \cref{lem:upper_bound_of_product}, we bound the last term as follows.
    \begin{align}
        2\delta\sum_{i=0}^{k-1}\paren*{1+\frac12\sqrt[3]{\frac{n^2\delta}{LD^2}}}^{-2(k-i-1)}
        &\leq2\delta\paren*{1-\paren*{1+\frac12\sqrt[3]{\frac{n^2\delta}{LD^2}}}^{-2}}^{-1}
        \leq2\delta\paren*{1+\sqrt[3]{\frac{LD^2}{n^2\delta}}},
    \end{align}
    where the last inequality follows from $(1-(1+x)^{-2})^{-1}\leq1+1/(2x)$ for all $x>0$.
    Combining these inequalities gives the desired inequality.
\end{proof}

%% file: 94_appendix_numerical.tex
\section{EXPERIMENTAL DETAILS AND ADDITIONAL RESULTS}\label{sec:exp_details}
\subsection{Details Common to All Experiments}\label{sec:common_details}
We used an Apple M4 (10 cores, 32\,GiB RAM), macOS 26.6 (arm64), Python 3.13.5, NumPy 2.5.3, SciPy 1.18.1, and Apple Accelerate BLAS.
We report the mean function value gap over 30 independent runs, together with pointwise 95\% confidence intervals based on Student's $t$ distribution.
For the quartic experiment, we vary the random seed used to generate the problem instance.
For the autoregression experiment, we vary the random seed used to select the time series.
All other random seeds are fixed.

\subsection{Details of the Quartic Function}\label{sec:quartic_details}
The problem considered in \cref{sec:quartic} is as follows:
\begin{align}
    \min_{x\in\mathcal X}f_k(x)\coloneqq \frac14\norm{x-y_k}_{A_k}^4,\quad\mathcal X\coloneqq[-1,1]^d.
\end{align}
We set $d=100$, $r=90$, and $t_k=kh$.
Let $\lambda_i\coloneqq10^{-(i-1)/(r-1)}$, and let $S\in\R^{d\times d}$ be a randomly generated skew-symmetric matrix normalized so that $\|S\|_2=1$.
We define $A_k\coloneqq\e^{t_kS}\diag(\lambda_1,\ldots,\lambda_r,0,\ldots,0)\e^{-t_kS}$.
The components of $y_k$ are given by $y_{k,i}\coloneqq\sin(\omega_i t_k)$, where $\omega_i\sim\operatorname{Unif}(0,1)$.
We use sampling periods $h\in\{0.1,0.05,0.02,0.01,0.005,0.002,0.001\}$.
For each value of $h$, the methods are run over all $k$ such that $t_k\in[0,10]$.
This problem satisfies \cref{asm:problem} with $D=2\sqrt d$, $L=3D^2$, and $\delta=O(h)$.
We set $x_0=(1,\ldots,1)^\top$.
The parameter $v$ of ORPG is set to $v=h\norm{\omega}$, since $\norm*{y_{k+1}-y_k}\leq h\norm{\omega}$.

\subsection{Details of the Autoregression Problems}\label{sec:autoregression_details}
In \cref{sec:autoregression}, we use three time-series datasets from the Monash Time Series Forecasting Archive~\citep{godahewa2021monash}: Electricity Hourly (Electricity), Traffic Hourly (Traffic), and Solar 10 Minutes (Solar).
Electricity and Traffic are based on real-world measurements, whereas Solar contains simulated solar power data.
The Electricity data originate from the \texttt{ElectricityLoadDiagrams20112014} dataset in the UCI Machine Learning Repository~\citep{trindade2015electricity}, the Traffic data from the California Department of Transportation Performance Measurement System (PeMS)~\citep{caltrans_pems}, and the Solar data from the Solar Power Data for Integration Studies provided by the National Renewable Energy Laboratory~\citep{nrel_solar}.
Specifically, we use the files \texttt{electricity\_hourly\_dataset.tsf}~\citep{godahewa2020electricityhourly}, \texttt{traffic\_hourly\_dataset.tsf}~\citep{godahewa2020traffichourly}, and \texttt{solar\_10\_minutes\_dataset.tsf}~\citep{godahewa2020solar10min}.

We use the first $17544$ rounds from each dataset, and we set the calibration period to the first $720$ rounds.
Time series with a population standard deviation of at most $10^{-8}$ during the calibration period are excluded, and $128$ time series are randomly sampled without replacement from the remaining series.
The time series are standardized using the mean and population standard deviation during the calibration period, ensuring that future data are not used in the calculation of these statistics.

For each $\mathcal D$, let $x^{*,\mathcal D}$ be an approximate minimizer of the calibration objective.
This objective is obtained from \cref{eq:autoregression} by replacing $\mathcal X$ with $\R^{1+168}$ and $\mathcal D_k$ with the dataset from the calibration period.
We initialize the methods at $x_{887}=0$ and set $R=\max\set{1,2\max_{\mathcal D}\norm{x^{*,\mathcal D}}}$, where $\max_{\mathcal D}$ is the maximum over the three datasets.
The minimizer is computed by the L-BFGS-B implementation in SciPy~\citep{virtanen2020scipy}. 
Since $\ell$ is 1-smooth, we have
\begin{align}
    \norm{\nabla^2f_k(x)}
    &\leq\frac1{|\mathcal D_k|}\sum_{(a,b)\in\mathcal D_k}\norm{a}^2
    \leq\max_{k=887,\ldots,17543}\frac1{|\mathcal D_k|}\sum_{(a,b)\in\mathcal D_k}\norm{a}^2.
\end{align}

For each dataset, we compute the rightmost quantity in the preceding inequality in advance and use it as $L$.
Although this uses future information, it ensures that differences in the accuracy of estimates of $L$ do not influence the comparison.
The same value of $L$ can still affect the methods differently because their step-size rules use it in different ways.
For calibration and initialization, we also define $f_k$ and $\mathcal D_k$ for $k=335,\ldots,886$ using the formulas in \cref{sec:autoregression}.
The parameter $v$ for ORPG is set to $v\coloneqq\max_{\mathcal D,k=335,\ldots,718}\norm*{x_{k+1}^{*,\mathcal D}-x_{k}^{*,\mathcal D}}$, where $x_k^{*,\mathcal D}$ is the approximate minimizer of the objective function~\cref{eq:autoregression}.
We compute the calibration reference solutions used to estimate $v$ and the reference values used to plot optimality gaps using SLSQP in SciPy~\citep{virtanen2020scipy}.
When necessary, the solutions are refined by PG iterations with step size $1/L_k^{\mathrm{loc}}$, where
\begin{align}
    L_k^{\mathrm{loc}}\coloneqq
    \lambda_{\max}\paren*{\frac{1}{|\mathcal D_k|}\sum_{(a,b)\in\mathcal D_k}aa^\top}.
\end{align}
We require the gradient mapping norm to be at most $10^{-6}$.
The mapping is evaluated with step size $1/L_k^{\mathrm{loc}}$ for the calibration references and $1/L$ for the references used to plot optimality gaps.
Although this approximation may affect the absolute values of the plotted gaps, it does not affect the ordering of the methods, since the same approximation of the optimal value is used for all methods and all plotted values are positive.

\subsection{Additional Numerical Results}\label{sec:additional_numerical_results}
We present additional numerical results comparing TV-APG and TV-APGe with two methods that apply $\apg_1$ directly and two methods proposed in the context of OCO.
In addition to the four methods in \cref{sec:numerical}, we implement TV-APGr, TV-APGn, SWORD, and SWORD++~\citep{zhao2024adaptivity}.

TV-APGr restarts $\apg_1$ at each round, using $(x_{k+1},z_{k+1},\underline{\hspace{0.8em}})\gets\apg_1(f_k,x_k,x_k,0,n,L)$.
TV-APGn is a naive extension of $\apg_1$ to the time-varying setting.
It continues the $\apg_1$ iterations across rounds without restarting, using $(x_{k+1},z_{k+1},\gamma_{k+1})=\apg_1(f_k,x_k,z_k,\gamma_k,n,L)$ with $\gamma_0=0$.

\subsubsection{Adaptation of SWORD and SWORD++}
We adapt SWORD and SWORD++ of \citet{zhao2024adaptivity} to a budget of $10$ gradient queries per round.

\paragraph{SWORD.}
We replace the original step size candidates with
\begin{align}
    \eta_1=\frac1{4L},\quad\eta_8=\min\set*{\frac{D}{G\sqrt{8K}},\frac1{4L}},\quad\eta_i=\eta_1\paren*{\frac{\eta_8}{\eta_1}}^{(i-1)/7},~i=2,\ldots,7,
\end{align}
where $K$ is the number of rounds with updates ($10/h$ for the quartic problem and $16656$ for autoregression) and $G$ is a constant such that $\norm{\nabla f_k(x)}\leq G$ for all $x\in\mathcal X$ and $k$.
We set $G=D^3$ for the quartic function and $G=\sqrt L$ for the autoregression problems.
Indeed, for the quartic function, we have
\begin{align}
    \norm{\nabla f_k(x)}
    &=\norm{A_k(x-y_k)}\norm{x-y_k}_{A_k}^2
    \leq\norm{A_k}^2\norm{x-y_k}^3
    \leq D^3,
\end{align}
and for the autoregression problems, we have 
\begin{align}
    \norm{\nabla f_k(x)}
    &=\norm{\frac1{|\mathcal D_k|}\sum_{(a,b)\in\mathcal D_k}\ell'(\inner{a,x}-b)a}
    \leq\frac1{|\mathcal D_k|}\sum_{(a,b)\in\mathcal D_k}\norm{a}
    \leq \sqrt L,
\end{align}
where the first inequality uses the fact that $\ell$ is 1-Lipschitz continuous and the last inequality uses the Cauchy--Schwarz inequality and the setting of $L$ in this experiment.
The rest of the algorithm is unchanged.

\paragraph{SWORD++.}
Instead of one update per physical round, we perform ten consecutive updates on the same loss $f_k$, each using one gradient shared by all experts. 
Let $x_k^i$ denote the $i$th update in round $k$ for $i=0,\ldots,9$, with $x_k^0=x_k$.
We retain the step size candidates of Equation~(26) in the original paper, using the virtual horizon
$10K$ instead of $K$.
The rest of the algorithm is unchanged.

\paragraph{Meta Learning Rate.}
We use the adaptive learning rate version in \citet[Appendix B]{zhao2024adaptivity}.
To compute $x_k^i$, we set the learning rate to
\begin{align}
    \varepsilon_{k,i}=
    \min\left\{\frac{1}{cD^2L},
    \sqrt{\frac{\log N}{D^2\sum_{(\ell,j)<_{\mathrm{lex}}(k,i)}\|\nabla f_{\ell}(x_\ell^j)-M_{\ell,j}\|^2}}\right\},
\end{align}
where $N$ is the number of experts, $c=4$ for SWORD and $c=8$ for SWORD++, and $<_{\mathrm{lex}}$ denotes the lexicographic order.
For SWORD, we restrict $i=j=0$, set $x_\ell^0=x_\ell$, and use $M_{\ell,0}=\nabla f_{\ell-1}(\bar x_\ell)$ for $\ell\geq1$, where $\bar x_\ell$ is the instrumental point defined in
\citet[Equation~(15)]{zhao2024adaptivity}.
For SWORD++, we set $M_{\ell,j}=\nabla f_\ell(x_\ell^{j-1})$ for $j\geq1$ and $M_{\ell,0}=\nabla f_{\ell-1}(x_{\ell-1}^9)$ for $\ell\geq1$.
In both cases, $M_{0,0}=0$.
We use $1/(cD^2L)$ when the sum in the denominator is zero.

\subsubsection{Results}
\cref{fig:quartic_appendix} shows the results for the quartic function.
\Cref{fig:quartic_001_appendix} plots the optimality gaps every 10 rounds when $h=0.01$, and \cref{fig:quartic_h_appendix} plots the gap at the last round for each value of $h$, where $h$ is the sampling period and $\delta=O(h)$ holds.
TV-APGr performed better than TV-PG thanks to the acceleration, but it performed much worse than TV-APGe and TV-APG.
TV-APGn performed best in the early rounds, but its performance deteriorated in the later rounds.
The performance of SWORD and SWORD++ was worse than that of TV-PG.

\begin{figure*}[tb]
    \centering
    \includegraphics[width=\columnwidth]{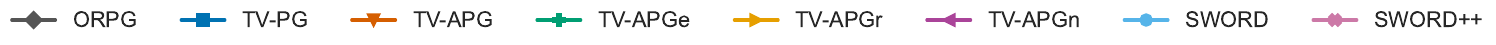}\\
    \begin{minipage}[b]{0.49\textwidth}
        \centering
        \includegraphics[width=\columnwidth]{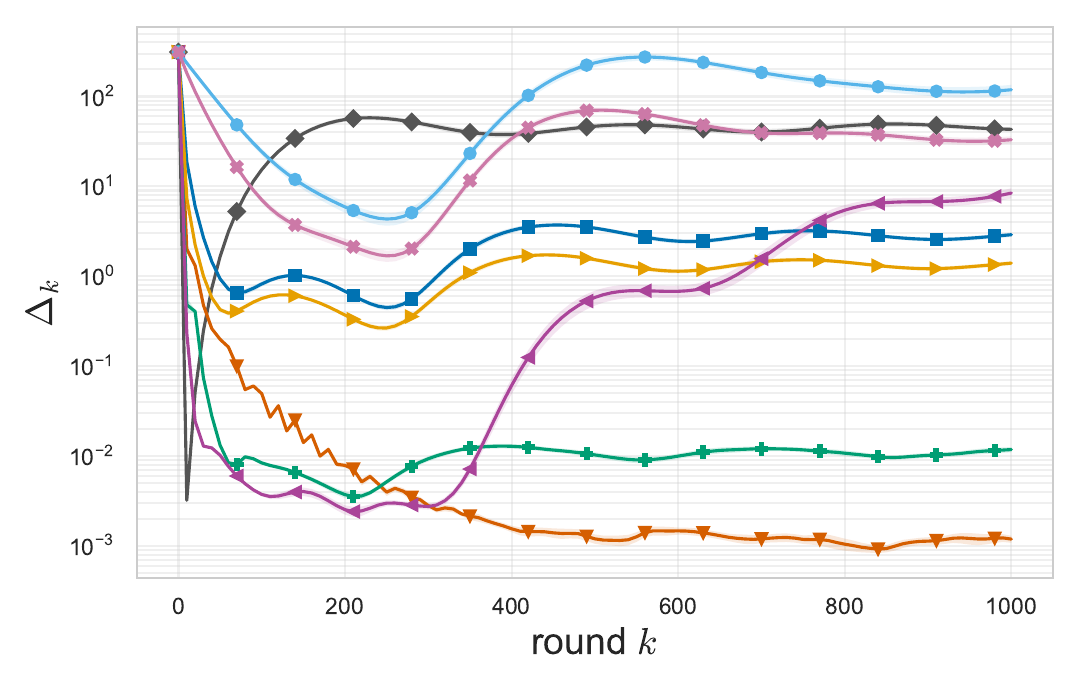}
        \subcaption{Gaps every 10 rounds when $h=0.01$.}
        \label{fig:quartic_001_appendix}
    \end{minipage}
    \begin{minipage}[b]{0.49\textwidth}
        \centering
        \includegraphics[width=\columnwidth]{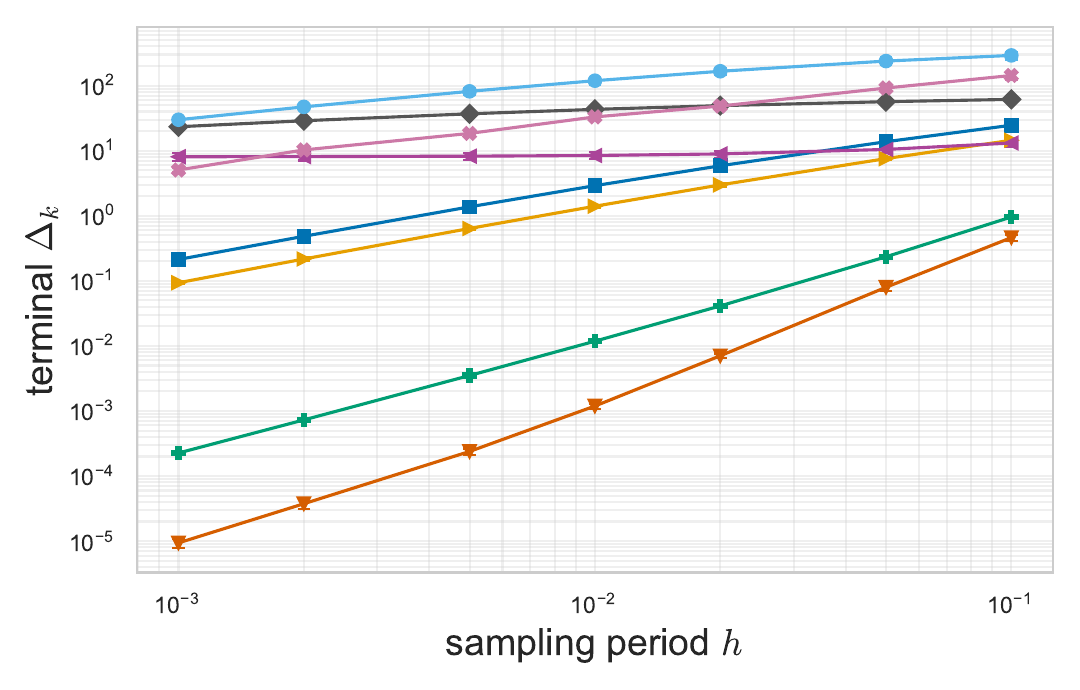}
        \subcaption{Terminal gaps for each $h$.}
        \label{fig:quartic_h_appendix}
    \end{minipage}
    \caption{Quartic Function. \cref{fig:quartic_001_appendix} plots the optimality gaps $\Delta_k$ when $h=0.01$ every 10 rounds. \cref{fig:quartic_h_appendix} plots $\Delta_k$ at the final round for each sampling period $h$. The confidence intervals are shown as shaded bands and error bars in \cref{fig:quartic_001_appendix,fig:quartic_h_appendix}, respectively, but they are too small to be visible.}
    \label{fig:quartic_appendix}
\end{figure*}

\cref{fig:autoregression_appendix} shows the results for the autoregression problems in the same format as \cref{sec:autoregression}.
TV-APGr performed better than TV-PG, but it performed worse than TV-APGe.
TV-APGn performed comparably to TV-APGe, matching its strong empirical performance, in \cref{fig:electricity_appendix}.
However, in the other two datasets, its performance deteriorated in the later rounds similarly to the quartic function experiment.
The performance of SWORD and SWORD++ was worse than that of TV-PG.

These results illustrate a limitation of applying $\apg_1$ directly to TVCO problems.
As discussed in \cref{sec:apg}, the difficulty arises because the parameter $\gamma$ depends on the number of rounds.
The results also show that SWORD and SWORD++ do not outperform TV-PG in the TVCO setting considered here, where the objective function changes gradually over time. 
This may partly reflect the difference in the settings for which the methods are designed: SWORD and SWORD++ provide theoretical guarantees under substantially more general online environments, including cases where the objective function may change abruptly across rounds, whereas TV-PG specifically exploits the gradual temporal variation assumed in our TVCO setting.

\begin{figure*}[tb]
    \centering
    \includegraphics[width=\columnwidth]{figures/legend_eight_methods.pdf}\\
    \begin{minipage}[b]{0.49\textwidth}
        \centering
        \includegraphics[width=\columnwidth]{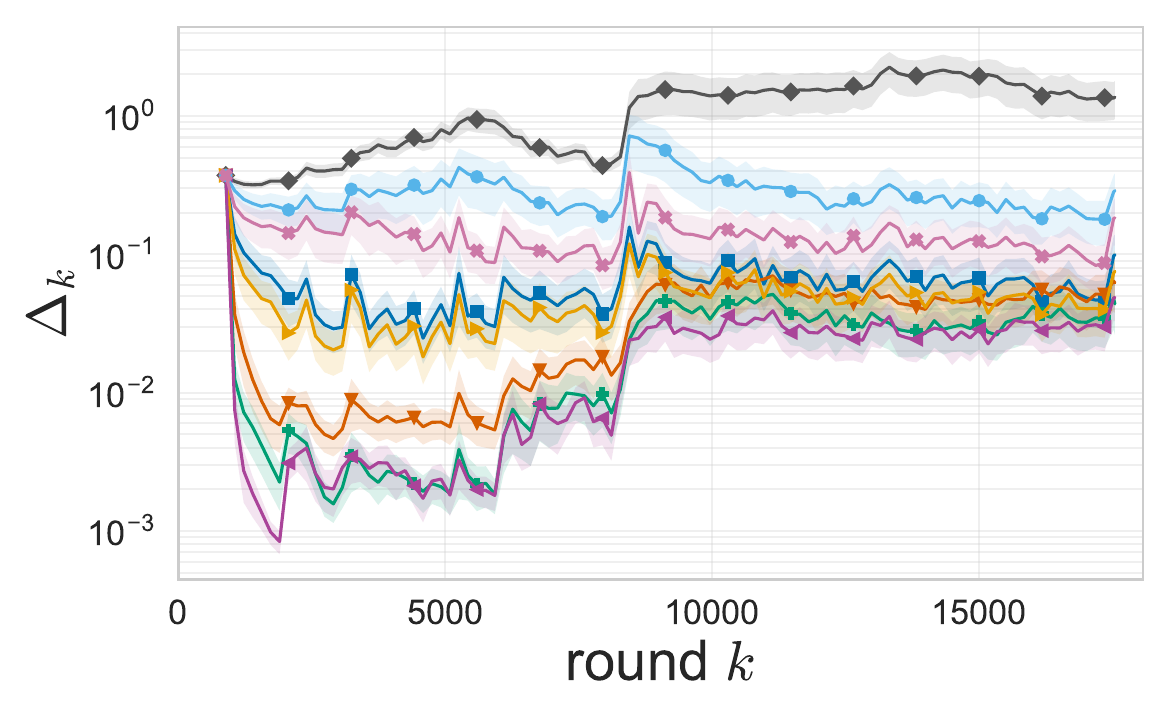}
        \subcaption{Electricity.}
        \label{fig:electricity_appendix}
    \end{minipage}
    \begin{minipage}[b]{0.49\textwidth}
        \centering
        \includegraphics[width=\columnwidth]{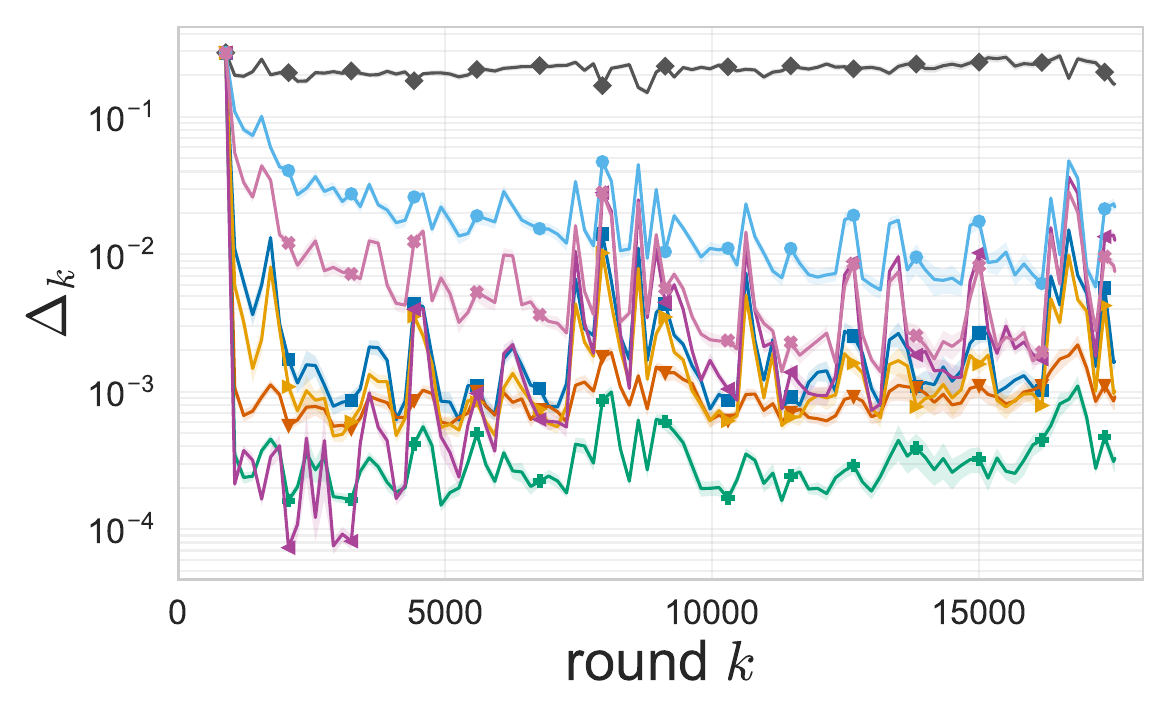}
        \subcaption{Traffic.}
        \label{fig:traffic_appendix}
    \end{minipage}\\
    \begin{minipage}[b]{0.49\textwidth}
        \centering
        \includegraphics[width=\columnwidth]{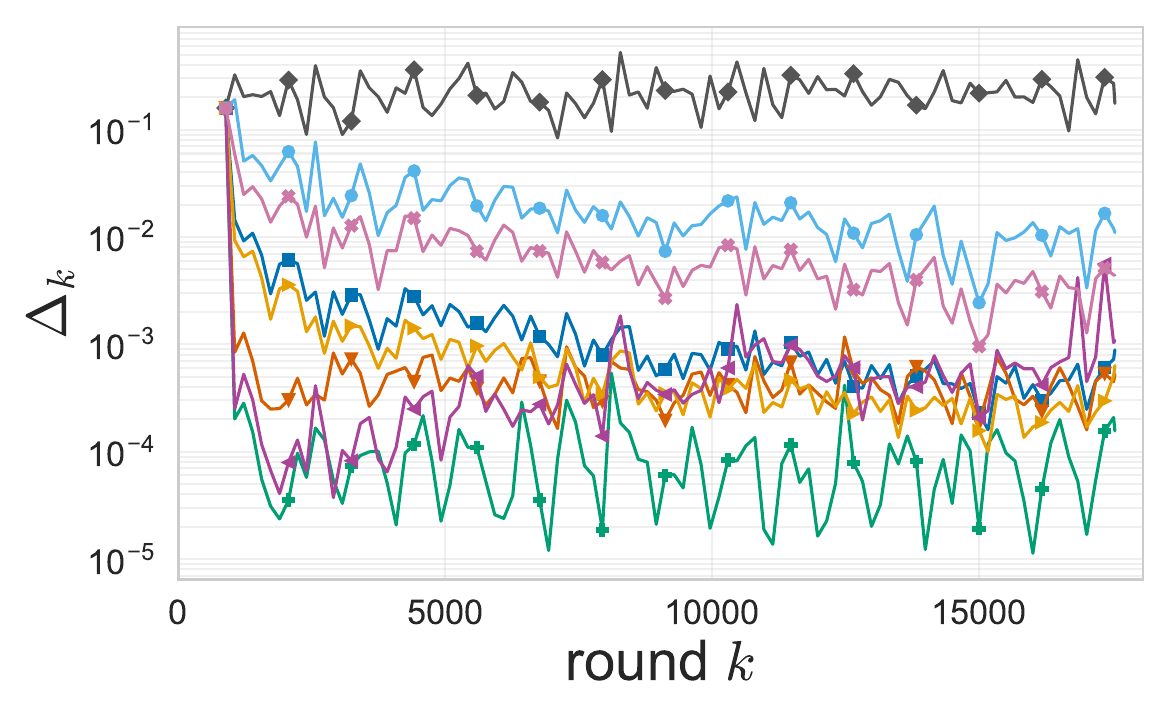}
        \subcaption{Solar.}
        \label{fig:solar_appendix}
    \end{minipage}
    \caption{Autoregression. The optimality gaps $\Delta_k$ are plotted every $168$ rounds and at the final round. The confidence intervals are shown as shaded bands.}
    \label{fig:autoregression_appendix}
\end{figure*}

\begin{table}[tb]
  \centering
  \caption{Computational Time per Round. Values are in milliseconds (mean $\pm$ sample standard deviation of five repeat means). Quartic uses $h=0.01$.}
  \label{tab:computational_time}
  \setlength{\tabcolsep}{3pt}
  \begin{tabular}{c|c|c|c|c}
    \toprule
    \textbf{Method} & \textbf{Quartic} & \textbf{Electricity} & \textbf{Traffic} & \textbf{Solar} \\
    \midrule
    ORPG & $0.441 \pm 0.027$ & $9.509 \pm 0.632$ & $9.445 \pm 0.669$ & $9.331 \pm 0.572$ \\
    TV-PG & $0.443 \pm 0.027$ & $9.536 \pm 0.672$ & $9.469 \pm 0.707$ & $9.343 \pm 0.575$ \\
    TV-APG & $0.478 \pm 0.029$ & $9.625 \pm 0.738$ & $9.535 \pm 0.735$ & $9.412 \pm 0.617$ \\
    TV-APGe & $0.524 \pm 0.031$ & $11.693 \pm 0.972$ & $11.616 \pm 1.019$ & $11.417 \pm 0.847$ \\
    TV-APGr & $0.465 \pm 0.029$ & $9.549 \pm 0.646$ & $9.500 \pm 0.697$ & $9.382 \pm 0.638$ \\
    TV-APGn & $0.465 \pm 0.028$ & $9.584 \pm 0.694$ & $9.488 \pm 0.699$ & $9.375 \pm 0.625$ \\
    SWORD & $0.453 \pm 0.023$ & $9.570 \pm 0.652$ & $9.504 \pm 0.721$ & $9.372 \pm 0.593$ \\
    SWORD++ & $0.763 \pm 0.031$ & $9.986 \pm 0.886$ & $9.958 \pm 0.903$ & $9.811 \pm 0.802$ \\
    \bottomrule
  \end{tabular}
\end{table}

\subsection{Computational Time}\label{sec:computational_time}
We report the average computational time per round for each method in \cref{tab:computational_time}. 
For each problem, we timed a single fixed instance over five repeats. 
In each repeat, all methods were reinitialized, run for 24 warm-up rounds, and then timed over the subsequent 240 rounds, with the warmed-up states carried forward. 
The reported values are the means $\pm$ sample standard deviations of the five repeat means, where each repeat mean is averaged over the 240 timed rounds.
The timings exclude construction of the autoregression design matrices and response vectors.

All methods have similar computational times, except for TV-APGe and SWORD++.
TV-APGe takes longer due to the additional function evaluations required for its updates.
SWORD++ takes longer for the quartic function because it uses five experts, while the number of experts is one or two for the autoregression problems.
The computational time required for algebraic operations, gradient evaluations, and function evaluations may vary depending on the problem setting and implementation.
Although the measured per-round computational times differ across methods, we fix the number of gradient queries per round to 10 for all methods in our experiments.